\documentclass[11pt]{amsart}
\title{The modal theory of the category of sets}
\author{Wojciech Aleksander Wo\l oszyn}
\address[Wojciech Aleksander Wo\l oszyn]
{Mathematical Institute, University of Oxford, Andrew Wiles Building, Radcliffe Observatory Quarter, Woodstock Road, Oxford, OX2 6GG, United Kingdom \&\ St Hilda's College, Cowley Place, Oxford, OX4 1DY, United Kingdom}
\email{wojciech@woloszyn.org}
\urladdr{https://woloszyn.org}

\usepackage{latexsym}   
\usepackage{amsfonts}   
\usepackage{amsmath}    
\usepackage{amssymb}    
\usepackage{mathrsfs}   

\usepackage[hidelinks]{hyperref} 
\usepackage[utf8]{inputenc} 
\usepackage{microtype} 

\usepackage[rgb,dvipsnames]{xcolor} 
\usepackage{tikz} 
\usetikzlibrary{backgrounds} 
\usetikzlibrary{arrows,arrows.meta,petri,topaths,positioning,shapes,shapes.misc,patterns,calc,decorations,decorations.pathreplacing,hobby} 
\usepackage{tikz-cd} 

\usepackage{adjustbox} 
\usepackage{stackengine} 
\usepackage{caption} 
\usepackage{floatrow} 
\usepackage{float} 
\usepackage[inline]{enumitem} 
\usepackage{tasks} 
\usepackage{booktabs} 
\usepackage[backend=biber,style=alphabetic,maxbibnames=15,maxcitenames=6,dateabbrev=false,autolang=hyphen]{biblatex}

\usepackage{mathtools} 

\usepackage{MathMacrosWAW} 

\renewcommand{\UrlFont}{\sffamily} 
\renewbibmacro{in:}{\ifentrytype{article}{}{\printtext{\bibstring{in}\intitlepunct}}} 
\DeclareFieldFormat{url}{{\UrlFont\url{#1}}} 
\DeclareFieldFormat{urldate}{
  (version \thefield{urlday}\addspace%
  \mkbibmonth{\thefield{urlmonth}}\addspace%
  \thefield{urlyear}\isdot)}
\DeclareFieldFormat{eprint:arxiv}{
\ifhyperref
    {\href{http://arxiv.org/abs/#1}{%
    arXiv\addcolon\nolinkurl{#1}}\iffieldundef{eprintclass}{}{\UrlFont{\mkbibbrackets{\thefield{eprintclass}}}}}
    {arXiv\addcolon\nolinkurl{#1}\iffieldundef{eprintclass}{}{\UrlFont{\mkbibbrackets{\thefield{eprintclass}}}}}}
\bibliography{HamkinsBiblio,MathBiblio,WoloszynBiblio}
\renewcommand\emptyset{\varnothing}
\newcommand\Val{\mathord{\rm Val}}

\tikzset{>=Stealth,
  dot/.style={circle,draw,fill,inner sep=#1},
    dot/.default=.7pt
}

\makeatletter
\def\@tocheadstart{}
\def\@tocheadend{}

\makeatother

\begin{document}
\raggedbottom

\begin{abstract}
	We classify the propositional modal validities of the category of sets under its natural classes of morphisms, with the answer depending on the morphisms, the size of the world, and the allowed substitution instances. Our main technical tool is a modality-and-quantifier elimination theorem for the first-order modal language of equality, reducing formulas to finite Boolean combinations of partition conditions and exact-cardinality assertions. From this we obtain exact classifications for the principal categories of sets, including the full subcategories of finite sets as well as infinite sets. In particular, finite $n$-element worlds in $\Sets$ with parameters realize $\theoryf{Prepartition}_n$, finite $n$-element worlds in $\Sets[$\onto$]$ realize $\theoryf{Grz.3J}_n$ sententially, and in the infinite-only subcategories sentential validities collapse to $\theoryf{Triv}$, while the formulaic validities for functions and surjections are exactly $\theoryf{Grz.2}$.
\end{abstract}
\maketitle
\vspace*{-1.6em}

We determine the propositional modal validities of the category of sets under its most natural classes of morphisms. The answer depends on the morphisms, the size of the world, and the allowed substitution instances, with companion results for the corresponding full subcategories of finite sets and infinite sets. The resulting classification is summarized in the~\nameref{Maintheorem.Sets}, below.

\begin{maintheorem*}\makeatletter\def\@currentlabelname{main theorem}\makeatother\label{Maintheorem.Sets}
    The following table summarizes the propositional modal validities considered in this paper. For each class of morphisms, the validities at any finite world are the same in $\Sets$ as in $\FinSets$.
    \renewcommand{\thefootnote}{\fnsymbol{footnote}}

    \newcommand{\myref}[1]{\textsuperscript{\supertiny\color{SalmonSunrise}\ref{#1}}}
    \newcommand{\theoryref}[2]{\theoryf{#1}\myref{#2}}

    \begin{table}[H]
        \small
        \begin{adjustbox}{center}
            \begin{tabular}{@{}llllll@{}}
                \toprule
                                       & \multicolumn{5}{c}{Propositional modal validities} \\
                \cmidrule(l){2-6}
                                       & Empty world & \multicolumn{2}{c}{Finite worlds of size $n>0$} & \multicolumn{2}{c}{Infinite worlds} \\\cmidrule(l){2-6}
                Category $\Sets$ with & --- & Sentential & Formulaic & Sentential & Formulaic \\\midrule
                $\to$ Functions    & \theoryref{Lollipop}{Theorem.Sets-empty-worlds-sentences-Lollipop} & \theoryref{S5}{Theorem.Sets-non-empty-worlds-sentences-S5} & $\theoryf{Prepartition}_n$\myref{Theorem.Sets-world-size-n-formulas-Prepartition} & \theoryref{S5}{Theorem.Sets-non-empty-worlds-sentences-S5} & \theoryref{S4.2}{Theorem.Sets-infinite-worlds-formulas-S4.2} \\\midrule
                $\onto$ Surjections   & \theoryref{Triv}{Theorem.Surj-singleton-or-empty-world-Triv} & $\theoryf{Grz.3J}_n$\myref{Theorem.Surj-world-size-n-sentences-Grz.3Jn} & $\theoryf{Partition}_n$\myref{Theorem.Surj-world-size-n-formulas-Partition} & \theoryref{Grz.3}{Theorem.Surj-infinite-worlds-Grz.3} & \theoryref{Grz.2}{Theorem.Surj-infinite-worlds-formulas-Grz.2} \\\midrule
                $\into$ Injections  & \theoryref{Grz.3}{Theorem.Inj-finite-worlds-Grz.3} & \theoryref{Grz.3}{Theorem.Inj-finite-worlds-Grz.3} & \theoryref{Grz.3}{Theorem.Inj-finite-worlds-Grz.3} & \theoryref{Triv}{Theorem.Inj-infinite-world-parameters-Triv} & \theoryref{Triv}{Theorem.Inj-infinite-world-parameters-Triv} \\\midrule
                $\incl$ Inclusions     & \theoryref{Grz.3}{Observation.Incl-and-Inj-same} & \theoryref{Grz.3}{Observation.Incl-and-Inj-same} & \theoryref{Grz.3}{Observation.Incl-and-Inj-same} & \theoryref{Triv}{Observation.Incl-and-Inj-same} & \theoryref{Triv}{Observation.Incl-and-Inj-same} \\\midrule
                $\bij$ Bijections    & \theoryref{Triv}{Observation.Iso-trivializations} & \theoryref{Triv}{Observation.Iso-trivializations} & \theoryref{Triv}{Observation.Iso-trivializations} & \theoryref{Triv}{Observation.Iso-trivializations} & \theoryref{Triv}{Observation.Iso-trivializations} \\\midrule
                $\ident$ Identities    & \theoryref{Triv}{Observation.Iso-trivializations} & \theoryref{Triv}{Observation.Iso-trivializations} & \theoryref{Triv}{Observation.Iso-trivializations} & \theoryref{Triv}{Observation.Iso-trivializations} & \theoryref{Triv}{Observation.Iso-trivializations} \\
                \midrule
            \end{tabular}
        \end{adjustbox}
        \refstepcounter{table}
\label{Table.Sets-propositional-modal-validities}
    \end{table}

    For the corresponding full subcategories of infinite sets, sentential validities are always \theoryref{Triv}{Corollary.InfSets-sentences-Triv}. Formulaically, $\InfSets$ and $\InfSets[$\onto$]$ both validate \theoryf{Grz.2}\textsuperscript{\supertiny\color{SalmonSunrise}
\ref{Theorem.InfSets-formulas-Grz.2},\ref{Theorem.InfSurj-infinite-worlds-formulas-Grz.2}}; $\InfSets[$\into$]$ and $\InfSets[$\incl$]$ validate
\theoryf{Triv}\textsuperscript{\supertiny\color{SalmonSunrise}
\ref{Theorem.Inj-infinite-world-parameters-Triv},\ref{Observation.Incl-and-Inj-same}}; and $\InfSets[$\bij$]$ and $\InfSets[$\ident$]$ validate \theoryref{Triv}{Observation.Iso-trivializations}.
\end{maintheorem*}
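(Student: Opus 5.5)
Each cell of the table is a separate theorem proved later in the paper, so the proof of the main theorem amounts to assembling those results. The plan is to prove one central tool first and then derive each cell from it by the standard lower-bound/upper-bound scheme of the modal logic of mathematical potentialism.

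The central tool is the modality-and-quantifier elimination theorem for the first-order modal language of equality. I would prove it by induction on formula complexity. For a world $A$ with parameters $\vec a$, every formula $\varphi(\vec x)$ should be equivalent to a finite Boolean combination of \emph{partition conditions}, namely which of the $x_i$ are equal, and \emph{exact-cardinality assertions} $|A|=k$ (or $|A|\geq k$). Quantifiers are eliminated in the usual way for pure equality. For $\Diamond$, I would describe, for each morphism class, which configurations (partition, size) are reachable by a morphism from the given one. Functions may identify parameters and enlarge or shrink the world, subject to emptiness constraints. Surjections may identify parameters but only shrink the size. Injections preserve the partition and only grow the size. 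Bijections and identities change nothing. Since the reachable configurations form a definable set, $\Diamond\varphi$ reduces to a Boolean combination of the same kind.

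The lower bounds, i.e.\ that the listed logic is valid, come from the frame structure of reachable configurations. The elimination theorem makes the truth of any substitution instance depend only on the configuration. So validity reduces to checking frame properties of the reachable-configuration preorder. Lollipop holds at the empty world because the empty set maps only into other sets, after which everything is mutually accessible. S5 holds sententially because, once the world is nonempty, all nonempty sizes are mutually reachable by functions. S4.2 follows from directedness. Grz.3 for injections and surjections follows from the linear, well-founded order of sizes. Grz.2 follows from the finite directed partial order of partitions under merging. Triv holds for bijections and identities, and at infinite worlds under injections, where nothing definable changes.

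The upper bounds use control statements: buttons, switches, ratchets, and the pure-button or partition-lattice labellings of Hamkins--Löwe. Examples are the sentences ``$|A|\leq k$'' used as a ratchet under surjections, and the parameter-equality statements ``$x_i=x_j$'' used as buttons. Realizing the exact finite frames $\Prepartition_n$, $\Partition_n$, and $\Grz.3J_n$ with these statements needs the known characterizations of those logics by their finite frames. The main obstacle is the exact computation for finite worlds of size $n$. One must show that the reachable-configuration frame is p-morphic onto, and no more than, the frames characterizing $\Prepartition_n$, $\Partition_n$, and $\Grz.3J_n$, and match the parameter-count bounds. I would first handle $\Partition_n$ for surjections, where the configurations are precisely partitions of at most $n$ blocks. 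I would then obtain $\Prepartition_n$ for functions by adding the cluster coming from freely varying size. The infinite-subcategory claims follow by restricting the same analysis to infinite sizes, where cardinality statements become trivial and only the partition content survives.
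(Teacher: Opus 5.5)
\paragraph{The gap: \theoryf{Grz} for injections.} Your overall architecture is the paper's: the elimination theorem, then lower bounds from reachable configurations, then upper bounds by control statements and labelings. It works for the function and surjection rows. It fails for the injection and inclusion rows.

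Your lower-bound step, ``validity reduces to checking frame properties of the reachable-configuration preorder'', breaks there. Your justification, ``linear, well-founded order of sizes'', points the wrong way.

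Under injections the parameter partition is frozen, and accessibility goes \emph{up}. From the empty world, or from a world of size $n$, the reachable configurations are the sizes $n,n+1,n+2,\ldots$ together with ``infinite''. This frame has order type $\omega+1$; in the finite-set category it is just $\omega$. It contains the infinite strictly ascending chain $n<n+1<\cdots$.

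\theoryf{Grz} is valid on a partial order only if there is no infinite strictly ascending chain of accessible nodes. That holds for surjections, where sizes only drop, but not here. Make $p$ true exactly at sizes $n+1,n+3,\ldots$ and at the infinite node. Then $\necessary(\necessary(p\implies\necessary p)\implies p)$ holds at the root while $p$ fails there. So validity on the configuration frame cannot be what delivers \theoryf{Grz.3}.

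\paragraph{The missing idea: eventual constancy.} The available propositions are not arbitrary subsets of that frame. By the elimination theorem, each substitution instance mentions only finitely many $\sigma_k$. Hence its truth value is constant on all sizes beyond some threshold $N$ and at the infinite node. You can use this in either of two ways.
\begin{itemize}
\item For the finitely many instances at hand, collapse all sizes $>N$ together with ``infinite'' into one top point. This is a p-morphism onto a finite chain, and a finite chain validates \theoryf{Grz.3}.
\item Follow the paper via the Second Lower Bounds Theorem. For a contingent $\varphi$, move to the \emph{largest} finite size at which $\varphi$ differs from its eventual value. There the current truth value is penultimate.
\end{itemize}

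\paragraph{Omission: finite worlds in $\Sets$ versus $\FinSets$.} Your plan never addresses the clause that finite worlds have the same validities in $\Sets$ as in $\FinSets$. This needs a short argument: at a finite world, the eliminations in the two categories coincide, as in the paper's observations. The finite-injection case also needs the eventual-constancy point, since its frame is $\omega$.
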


The table and the accompanying remarks record how the modal validities change with the morphisms, the size of the world, and the substitution regime. For example, in $\Sets$ nonempty worlds validate $\theoryf{S5}$ sententially, while finite $n$-element worlds validate $\theoryf{Prepartition}_n$ formulaically; in $\Sets[$\onto$]$, finite $n$-element worlds validate $\theoryf{Grz.3J}_n$ sententially and $\theoryf{Partition}_n$ formulaically; while among the infinite-set subcategories the sentential validities always collapse to $\theoryf{Triv}$, with the formulaic validities given by $\theoryf{Grz.2}$ for functions and surjections and by $\theoryf{Triv}$ for injections, inclusions, bijections, and identities. The superscript references attached to each cited theory point to the corresponding proof.

The technical backbone of these calculations is the~\nameref{Modalityeliminationtheorem.Sets} for the first-order modal language of equality. In the relevant categories of sets, every formula reduces to a finite Boolean combination of equality patterns and exact-cardinality assertions. This reduction isolates the concrete combinatorial structures that govern the later modal logics: partition lattices and prepartition prelattices in the parameterized cases, and finite linear orders of cardinality types in the sentential surjective case.

This paper belongs to modal model theory~\cite{HamkinsWoloszyn:Modal-model-theory}, which studies a mathematical structure within a class of similar structures under an extension concept giving rise to natural notions of possibility, necessity, and modal validity. In the principal cases considered there, one works with classes such as $\Mod(T)$ under the submodel relation, thereby extending the potentialist perspective developed in set-theoretic and arithmetic potentialism~\cite{HamkinsLinnebo:Modal-logic-of-set-theoretic-potentialism,Hamkins:The-modal-logic-of-arithmetic-potentialism}. Related analyses include the modal logic of abelian groups~\cite{Berger-Block-Lowe:The-modal-logic-of-abelian-groups}. Our contribution is to generalize modal model theory from such extension-based systems to arbitrary concrete categories of structures in a common first-order language, allowing arbitrary morphisms rather than only inflationary substructure-type maps, and to give an exact classification for the category of sets under its natural classes of morphisms, together with companion results for the full subcategories of finite sets and infinite sets.

Sections~\ref{Section.Modal-semantics}--\ref{Section.validity} isolate the minimal general framework and modal tools used later. Section~\ref{Section.Modality-elimination-in-Sets} proves the elimination theorem for sets. The remaining sections analyze, in turn, the cases of functions, surjections, injections and inclusions, and finally bijections and identities.

\section{Modal semantics for a concrete category}\label{Section.Modal-semantics}

Potentialist systems, as studied in set-theoretic and arithmetic potentialism, are reflexive transitive Kripke systems of structures in which accessibility is inflationary on the underlying domains, so that individuals persist into accessible worlds \cite{HamkinsLinnebo:Modal-logic-of-set-theoretic-potentialism,Hamkins:The-modal-logic-of-arithmetic-potentialism}. More recently, potentialist systems were applied to arbitrary classes of structures in modal model theory~\cite{HamkinsWoloszyn:Modal-model-theory}. The present framework of Kripke categories generalizes that setting by allowing arbitrary concrete categories of structures and by dropping the requirement that accessibility mappings be inclusions, or even inflationary on the underlying domains.\footnote{For philosophical background on the distinction between actual and potential infinity, see \cite{LinneboShapiro2017:Actual-and-potential-infinity}. For the modern mathematical development of potentialist systems and their modal logic, see \cite{HamkinsLinnebo:Modal-logic-of-set-theoretic-potentialism,Hamkins:The-modal-logic-of-arithmetic-potentialism,HamkinsWoloszyn:Modal-model-theory}.} Recall that an \emph{$\commonL$-homomorphism} between $\commonL$-structures $U$ and $V$ is a function $h \colon U \to V$ that preserves atomic truths: whenever $\varphi$ is $\commonL$-atomic and $U\satisfies\varphi[\nu]$, we have $V\satisfies\varphi[h\compose\nu]$.

\begin{maindefinition*}\makeatletter\def\@currentlabelname{main definition}\makeatother\label{Definition.Kripke-category}
	A \emph{Kripke category} is a concrete category of $\commonL$-structures in a common first-order language $\commonL$. That is, the objects in the category are each $\commonL$-structures, and the morphisms are $\commonL$-homomorphisms. We shall call these objects \emph{worlds} and morphisms \emph{accessibility mappings}.
\end{maindefinition*}

Typical examples are the categories of groups, graphs, and sets with their usual homomorphisms; our main case is $\Sets$ in the language of equality.

Let $\LDiamond$ denote the first-order modal expansion of $\commonL$. If $W_0$ is a world in a Kripke category $\KripkeCat$, write $W_0^{\uparrow}$ for the class of worlds accessible from $W_0$, and let $\mathrm{Cone}(W_0)$ be the full subcategory of $\KripkeCat$ on $W_0^{\uparrow}$. Thus $\mathrm{Cone}(W_0)$ is the part of the category visible from $W_0$; categorically, there is a canonical forgetful functor from the coslice category $(W_0/\KripkeCat)$ to $\mathrm{Cone}(W_0)$.

The satisfaction relation $W \satisfies_{\KripkeCat} \varphi[\nu]$ for worlds $W$ on the cone above $W_0$ is defined by the usual Tarskian clauses together with:
\begin{enumerate}[partopsep=5pt]
	\setlength\itemsep{5pt}
	\item $W \satisfies_{\KripkeCat} \possible \varphi[\nu]$ if there is an accessibility mapping $f \colon W \to U$ in $\mathrm{Cone}(W_0)$ such that $U \satisfies_{\KripkeCat} \varphi[f \compose \nu]$;
	\item $W \satisfies_{\KripkeCat} \necessary \varphi[\nu]$ if for every accessibility mapping $f \colon W \to U$ in $\mathrm{Cone}(W_0)$, we have that $U \satisfies_{\KripkeCat} \varphi[f \compose \nu]$.
\end{enumerate}
An $\LDiamond$-assertion is \emph{true} at $W$ when $W \satisfies_{\KripkeCat} \varphi$. When the ambient category is clear, we simply write $\satisfies$.

\section{Elementary modal model theory}

The basic modal model theory of Kripke categories closely parallels the potentialist setting. We record here only the facts needed later.

\newtheorem*{renaminglemma}{Renaming lemma}%
\begin{renaminglemma}\makeatletter\def\@currentlabelname{renaming lemma}\makeatother\label{Renaming-lemma}
	Suppose $\KripkeCat$ is a Kripke category of $\commonL$-structures. Isomorphisms preserve all modal truths in $\KripkeCat$. More precisely, if $W$ and $U$ are worlds in $\KripkeCat$ and $\pi \colon W \cong U$ is an isomorphism in $\KripkeCat$, then for any $\LDiamond$-assertion $\varphi$, we have that
	\begin{equation*}
		W\satisfies\varphi[\nu]\qquad\text{if and only if}\qquad U\satisfies\varphi[\pi \compose \nu].
	\end{equation*}
	In other words, $\LDiamond$-truths are preserved in $\KripkeCat$ by $\commonL$-isomorphisms that belong to the category.
\end{renaminglemma}

\begin{proof}
	Let $W$ and $U$ be worlds in $\KripkeCat$, and let $\pi\colon W\to U$ be an isomorphism.
	We show, by induction on $\varphi$, that for every assignment $\nu$ (with domain containing all free variables of $\varphi$),
	\begin{equation*}
		W\satisfies\varphi[\nu]\quad\Longleftrightarrow\quad U\satisfies\varphi[\pi\compose\nu].
	\end{equation*}

	For atomic $\commonL$-formulae this holds because $\pi$ is an isomorphism of $\commonL$-structures.

	Boolean connectives and quantifiers are handled in the usual way. For example, for $\exists x\,\psi$:
	\begin{align*}
		W\satisfies\exists x\,\psi[\nu]
		&\Longleftrightarrow
		(\exists w\in W)\;W\satisfies\psi[\nu[x\mapsto w]]\\
		&\Longleftrightarrow
		(\exists w\in W)\;U\satisfies\psi[\pi\compose\nu[x\mapsto w]]\\
		&\Longleftrightarrow
		(\exists u\in U)\;U\satisfies\psi[(\pi\compose\nu)[x\mapsto u]].
	\end{align*}
	using the induction hypothesis in the middle step and the bijectivity of $\pi$ in the last.

	For $\possible\psi$, assume that $W\satisfies\possible\psi[\nu]$.
	Then there is an accessibility mapping $f\colon W\to W^*$ with $W^*\satisfies\psi[f\compose\nu]$.
	Since $\pi^{-1}\colon U\to W$ is also a morphism, $f\compose\pi^{-1}\colon U\to W^*$ is an accessibility mapping as well.
	Moreover,
	\begin{equation*}
		(f\compose\pi^{-1})\compose(\pi\compose\nu)=f\compose\nu,
	\end{equation*}
	so $W^*\satisfies\psi[(f\compose\pi^{-1})\compose(\pi\compose\nu)]$, and hence $U\satisfies\possible\psi[\pi\compose\nu]$.
	The reverse direction is analogous (using $\pi^{-1}$ in place of $\pi$), and the case of $\necessary\psi$ follows by duality.

	Thus $W$ and $U$ have the same $\LDiamond$-theory up to renaming of elements along $\pi$.
\end{proof}

A Kripke category $\KripkeCat$ admits \emph{modality trivialization} over a language if every formula $\varphi(\bar x)$ in that language is equivalent in $\KripkeCat$ to $\possible\varphi(\bar x)$, and hence also to $\necessary\varphi(\bar x)$. It is \emph{model complete} if every accessibility mapping $f\colon W\to U$ preserves $\commonL$-truths, that is,
\begin{equation*}
	W\satisfies\varphi[\nu]\qquad\text{if and only if}\qquad U\satisfies\varphi[f\compose\nu],
\end{equation*}
for every $\commonL$-formula $\varphi$. It is enough to require only the forward implication $W\satisfies\varphi[\nu]\Rightarrow U\satisfies\varphi[f\compose\nu]$, since applying it to $\neg\varphi$ yields the reverse implication.

\begin{theorem}\label{Theorem.world-complete-iff-modalities-trivialize}
	A Kripke category $\KripkeCat$ of $\commonL$-structures admits modality trivialization over its modal expansion if and only if it is model complete. That is, $\KripkeCat \satisfies \forall \bar x \, \possible \varphi(\bar x) \iff \varphi(\bar x)$, and equivalently $\KripkeCat \satisfies \forall \bar x \, \necessary \varphi(\bar x) \iff \varphi(\bar x)$, for every formula $\varphi$ in the modal expansion $\LDiamond$ if and only if every accessibility mapping $f \colon W \to U$ in $\KripkeCat$ is $\commonL$-truth preserving.
\end{theorem}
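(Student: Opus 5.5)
Both directions are short, but the forward direction must be organized as an induction whose hypothesis is strong enough to handle the modal operators.

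\emph{Modality trivialization implies model completeness.} I would take an $\commonL$-formula $\varphi$, an accessibility mapping $f\colon W\to U$, and an assignment $\nu$ with $W\satisfies\varphi[\nu]$. By trivialization, $W\satisfies\necessary\varphi[\nu]$. The semantic clause for $\necessary$ then gives $U\satisfies\varphi[f\compose\nu]$. Here I evaluate the modal operators in $\mathrm{Cone}(W)$, which contains $U$ and $f$. Since $\varphi$ has no modal operators, its truth at $U$ is just Tarskian truth, so $f$ preserves $\varphi$. As noted before the theorem, the forward implication is enough for full truth preservation. One point needs care: trivialization is asserted at every world on every cone. In particular it holds with base $W_0=W$, and that is the only instance used here.

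\emph{Model completeness implies modality trivialization.} The difficulty is that $\varphi$ may itself contain modal operators. So I would first prove the stronger claim that every accessibility mapping $f\colon W\to U$ preserves and reflects all $\LDiamond$-formulas:
\begin{equation*}
	W\satisfies\varphi[\nu]\iff U\satisfies\varphi[f\compose\nu].
\end{equation*}
This is proved by induction on $\varphi$, with the cone base $W_0$ fixed and $W,U$ in $\mathrm{Cone}(W_0)$.
\begin{itemize}
	\item \emph{Modal-free formulas.} This is exactly model completeness. The induction must also pass through quantifiers over formulas with modal subformulas, which plain model completeness does not cover directly.
	\item \emph{Strategy for the quantifier and modal steps.} I would use the key observation that, under model completeness, every modal subformula becomes equivalent to its matrix. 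Concretely, I would prove simultaneously, by induction on $\varphi$, that (a) $\varphi$ is equivalent at every world of the cone to a modal-free formula $\varphi^*$ obtained by deleting all modal operators, and (b) consequently $\varphi$ is preserved along morphisms.
	\item \emph{Case $\possible\psi$.} Let $\psi^*$ be the modal-free equivalent of $\psi$. Then
	\begin{equation*}
		W\satisfies\possible\psi[\nu]\iff \exists f\colon W\to U\ \bigl(U\satisfies\psi^*[f\compose\nu]\bigr)\iff W\satisfies\psi^*[\nu].
	\end{equation*}
	The second step uses model completeness in both directions. For the forward direction, reflect along $f$. For the backward direction, take $f=\mathrm{id}_W$, which is available because the category has identities, and $W$ lies in its own cone.
	\item \emph{Other cases.} The case $\necessary\psi$ is dual. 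Boolean connectives and quantifiers commute with the translation $\varphi\mapsto\varphi^*$, because the equivalence of $\psi$ and $\psi^*$ holds pointwise for all assignments at every world.
\end{itemize}
With (a) established, the equivalence $\possible\varphi\iff\varphi$ is precisely the modal case just computed. The equivalence $\necessary\varphi\iff\varphi$ follows by duality.

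\emph{Main obstacle.} The main obstacle is making the induction hypothesis strong enough. It must be uniform over all worlds in the cone and all assignments, so that the quantifier step goes through. It must also supply the modal-free equivalent needed to invoke model completeness in the modal step. Once (a) is stated that way, each step is routine. The only structural facts used are that identities are morphisms and that every world lies in its own cone.
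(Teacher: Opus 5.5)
Your proposal is correct and follows essentially the same route as the paper. For trivialization implies model completeness you read preservation off $\necessary\varphi$ and get reflection by applying it to $\neg\varphi$, where the paper uses $\possible\varphi$ directly. For the converse you prove $\varphi\equiv\varphi^{*}$ for the modality-erasure by induction, with the modal step using the identity map in one direction and model-complete reflection in the other, exactly as the paper does; your auxiliary clause (b) is not needed for that induction, since it follows from (a) plus model completeness.
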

\begin{proof}
	First assume that $\KripkeCat$ admits modality trivialization.
	Let $f\colon W\to U$ be any accessibility mapping and let $\varphi$ be a $\commonL$-formula.
	For any assignment $\nu$, if $W\satisfies\varphi[\nu]$ then, by trivialization, $W\satisfies\necessary\varphi[\nu]$, and hence $U\satisfies\varphi[f\compose\nu]$ by the semantics of $\necessary$.
	Conversely, if $U\satisfies\varphi[f\compose\nu]$, then $f$ witnesses $W\satisfies\possible\varphi[\nu]$, and trivialization yields $W\satisfies\varphi[\nu]$.
	Thus $f$ preserves and reflects $\commonL$-truths, and $\KripkeCat$ is model complete.

	Conversely, assume that $\KripkeCat$ is model complete.
	We first observe that if $\alpha$ is a modality-free ($\commonL$) formula, then $\possible\alpha$ is equivalent to $\alpha$ in $\KripkeCat$.
	Indeed, for any world $W$ and assignment $\nu$, if $W\satisfies\alpha[\nu]$ then the identity mapping witnesses $W\satisfies\possible\alpha[\nu]$.
	For the other direction, if $W\satisfies\possible\alpha[\nu]$ then there is an accessibility mapping $f\colon W\to U$ with $U\satisfies\alpha[f\compose\nu]$; by model completeness, $W\satisfies\alpha[\nu]$.
	By duality, also $\necessary\alpha$ is equivalent to $\alpha$ for modality-free $\alpha$.

	Now define, for each $\LDiamond$-formula $\varphi$, its \emph{modality-erasure} $\varphi^{*}$ recursively by pushing through boolean connectives and quantifiers and by setting
\begin{equation*}
		(\possible\psi)^{*}=\psi^{*}
		\qquad\text{and}\qquad
		(\necessary\psi)^{*}=\psi^{*}.
\end{equation*}
	Then $\varphi^{*}$ is modality-free.
	We claim that $\varphi$ is equivalent to $\varphi^{*}$ in $\KripkeCat$, proved by structural induction on $\varphi$.
	The boolean and quantifier cases are routine.
	For the modal case $\varphi=\possible\psi$, the induction hypothesis gives $\psi\equiv\psi^{*}$, hence $\possible\psi\equiv\possible\psi^{*}$; since $\psi^{*}$ is modality-free, $\possible\psi^{*}\equiv\psi^{*}$ by the previous paragraph, and so $\possible\psi\equiv\psi^{*}=\varphi^{*}$.
	The case $\varphi=\necessary\psi$ is similar (using duality instead of the $\possible$-argument).

	Finally, for any $\varphi$ we have
\begin{equation*}
		\possible\varphi\;\equiv\;\possible\varphi^{*}\;\equiv\;\varphi^{*}\;\equiv\;\varphi,
\end{equation*}
	and by duality also $\necessary\varphi\equiv\varphi$.
	Hence $\KripkeCat$ admits modality trivialization.
\end{proof}

Quantifier elimination, modality elimination, and simultaneous elimination down to a chosen class of formulas are defined exactly as in ordinary model theory, now with equivalence taken in the Kripke category rather than in a theory. In particular, a Kripke category admits \emph{modality elimination} over a language if every formula is equivalent to a modality-free one, and admits simultaneous modality and quantifier elimination \emph{down to} a class $\Lambda$ if every formula is equivalent to an assertion from $\Lambda$. For the categories of sets considered later, the relevant target class will be finite Boolean combinations of equality patterns together with exact-cardinality sentences; section~\ref{Section.Modality-elimination-in-Sets} establishes this explicitly.
\section{Propositional modal theory}\label{Section.Propositional-modal-theory}

We use only a small amount of standard propositional modal logic; for background see~\cite{ChagrovZakharyaschev1997:ModalLogic},~\cite{BlackburnDeRijkeVenema2001:ModalLogic}, or~\cite{Kracht1999:Tools-and-Techniques-in-Modal-Logic}. The propositional modal language has propositional variables, Boolean connectives, and the modal operators $\possible$ and $\necessary$. For $n\geq 1$, define recursively \axiomf{$J_1$} as $\possible\necessary p_1\implies p_1$ and \axiomf{$J_{n+1}$} as $\possible(\necessary p_{n+1} \land \neg J_n)\implies p_{n+1}$. As is standard in the potentialist literature, we use \axiomf{5} in the maximality-principle form $\possible\necessary p\implies p$, so that \axiomf{5} and \axiomf{$J_1$} coincide. We shall use the following familiar axioms:
\begingroup\small
\setlength{\abovedisplayskip}{4pt}
\setlength{\belowdisplayskip}{4pt}
\setlength{\abovedisplayshortskip}{4pt}
\setlength{\belowdisplayshortskip}{4pt}
$$\begin{array}{rl}
		\axiomf{K}       & \necessary( p \implies  q)\implies(\necessary p \implies\necessary q)                \\
		\axiomf{Dual}    & \neg\possible  p \iff \necessary\neg p                                               \\
		\axiomf{4}       & \necessary p \implies \necessary\necessary p                                         \\
		\axiomf{T}       & \necessary p \implies p                                                              \\
		\axiomf{Grz}     & \necessary(\necessary( p \implies\necessary p )\implies p )\implies p                \\
		\axiomf{.2}      & \possible\necessary p \implies\necessary\possible p                                  \\
		\axiomf{.3}      & \necessary (\necessary  p  \implies  q) \lor \necessary (\necessary  q \implies  p ) \\
		\axiomf{J}_{n+1} & \possible(\necessary p_{n+1} \land \neg J_n)\implies  p_{n+1}                        \\
		\axiomf{J_1}     & \possible\necessary p \implies p                                                     \\
		\axiomf{5}       & \possible\necessary p \implies p                                                     \\
		\axiomf{Triv}    & \necessary  p  \iff  p.                                                              \\
	\end{array}$$
\endgroup

A propositional Kripke model is a list of propositional worlds together with an accessibility relation; a frame is the corresponding relational structure without the valuation. We use the standard Kripke semantics. A cluster is a maximal set of mutually accessible worlds, and a chain of length $n$ is a sequence of $n$ nodes lying in successive clusters. Thus a frame has \emph{height} $n$ if its longest strict chain has length $n$.

A \emph{normal} propositional modal theory contains all propositional tautologies and the axioms \axiomf{K} and \axiomf{Dual}, and is closed under modus ponens, substitution, and necessitation. We write $\theoryf{K}$ for the smallest such theory, and for theories $T_0,T_1$ and an axiom $\alpha$ we write $T_0+T_1+\alpha$ for their normal closure. In particular:
{\setlength{\abovedisplayskip}{4pt}\setlength{\belowdisplayskip}{4pt}\setlength{\abovedisplayshortskip}{4pt}\setlength{\belowdisplayshortskip}{4pt}%
\renewcommand{\arraystretch}{0.94}
$$\begin{array}{rcl}
		\theoryf{Triv}     & = & \axiomf{Triv} + \axiomf{4} = \theoryf{S5}+\theoryf{Grz} = \theoryf{Grz.3J}_1 \\
		\theoryf{S5}       & = & \theoryf{S4}+5=\theoryf{S4}+\axiomf{J}_1                                     \\
		\theoryf{S4.3J}_n  & = & \theoryf{S4.3}+\axiomf{J}_n                                                  \\
		\theoryf{S4.3}     & = & \theoryf{S4}+\axiomf{.3}                                                     \\
		\theoryf{S4.2}     & = & \theoryf{S4}+\axiomf{.2}                                                     \\
		\theoryf{Grz.3J}_n & = & \theoryf{Grz.3}+\axiomf{J}_n                                                 \\
		\theoryf{Grz.3}    & = & \theoryf{S4.3}+\axiomf{Grz}                                                  \\
		\theoryf{Grz.2}    & = & \theoryf{S4.2}+\axiomf{Grz}                                                  \\
		\theoryf{Grz}      & = & \theoryf{K}+\axiomf{Grz}=\theoryf{S4}+\axiomf{Grz}                           \\
		\theoryf{S4}       & = & \theoryf{K}+\axiomf{T}+\axiomf{4}.
	\end{array}$$}

\tikzset{
	symbol/.style={
			draw=none,
			every to/.append style={
					edge node={node [sloped, allow upside down, auto=false]{$#1$}}}
		}
}
\makeatletter
\DeclareRobustCommand{\rvdots}{%
	\vbox{
		\baselineskip4\p@\lineskiplimit\z@
		\kern-\p@
		\hbox{.}\hbox{.}\hbox{.}
	}}
\makeatother

The diagram below indicates the relative strength of the modal theories that occur later in the paper; the theories \theoryf{Lollipop}, $\theoryf{Prepartition}_n$, and $\theoryf{Partition}_n$ are introduced in sections~\ref{Section.category-of-sets-with-functions} and~\ref{Section.category-of-sets-with-surjections}.
\begingroup\setlength{\intextsep}{4pt}
\begin{figure}[H]
	\centering
	\begin{tikzcd}[column sep=tiny,row sep=1.0ex]
			& \theoryf{Triv} \arrow[d] \arrow[rd]              &                                           &                                         \\
			& \theoryf{S5} \arrow[rd] \arrow[lddddd]          & \theoryf{Grz.3J}_2 \arrow[rdd] \arrow[d] &                                         \\
			&                                                  & \theoryf{Lollipop} \arrow[d] \arrow[lldddd] &                                      \\
			&                                                  & \theoryf{S4.3J}_2 \arrow[d]               & \theoryf{Grz.3J}_3 \arrow[d] \arrow[ld] \\
			&                                                  & \theoryf{S4.3J}_{3} \arrow[d] \arrow[ld] & \rvdots \arrow[d]                     \\
			& \theoryf{Partition}_3 \arrow[d] \arrow[ld]      & \rvdots \arrow[d]                        & \theoryf{Grz.3J}_n \arrow[d] \arrow[ld] \\
			\theoryf{Prepartition}_3 \arrow[d] & \rvdots \arrow[d]                        & \theoryf{S4.3J}_{n} \arrow[dd] \arrow[ld] & \theoryf{Grz.3} \arrow[dd] \arrow[ldd] \\
			\rvdots \arrow[d]                  & \theoryf{Partition}_n \arrow[rrd] \arrow[ld] &                                       &                                         \\
			\theoryf{Prepartition}_n \arrow[rd] &                                                 & \theoryf{S4.3} \arrow[ld]               & \theoryf{Grz.2} \arrow[ld] \arrow[lld] \\
			& \theoryf{S4.2} \arrow[d]                        & \theoryf{Grz} \arrow[ld]                 &                                         \\
			& \theoryf{S4}                                    &                                           &
		\end{tikzcd}
\end{figure}\endgroup

We shall use the following frame characterizations, all with respect to finite frames: \theoryf{Triv} is characterized by isolated nodes, \theoryf{S5} by isolated clusters, \theoryf{Lollipop} by finite lollipop frames, $\theoryf{S4.3J}_n$ by finite linear preorders of length at most $n$, \theoryf{S4.3} by finite linear preorders, $\theoryf{Prepartition}_n$ by finite partition prelattices over an $n$-element set, \theoryf{S4.2} by finite directed preorders, $\theoryf{Grz.3J}_n$ by finite linear orders of length at most $n$, \theoryf{Grz.3} by finite linear orders, $\theoryf{Partition}_n$ by the finite partition lattice of an $n$-element set, \theoryf{Grz.2} by finite directed partial orders, \theoryf{Grz} by finite partial orders, and \theoryf{S4} by finite preorders. The finite reductions for lollipop frames and partition prelattices are included in the corresponding sections below; the remaining completeness and frame-characterization facts are standard, see the references cited above.
\section{Propositional modal validities in a Kripke category}\label{Section.validity}

A central task in modal model theory is to determine which propositional modal principles are valid in a system of structures or at a particular world. Following the potentialist literature \cite{HamkinsLinnebo:Modal-logic-of-set-theoretic-potentialism,Hamkins:The-modal-logic-of-arithmetic-potentialism} and~\cite{HamkinsWoloszyn:Modal-model-theory}, we extend this notion from potentialist systems to arbitrary Kripke categories.

\begin{keydefinition*}\makeatletter\def\@currentlabelname{key definition}\makeatother\label{Definition.Valid}
	A propositional modal assertion $\varphi(p_0,\ldots,p_n)$, with propositional variables $p_0,\ldots,p_n$, is \emph{valid} at a world $W$ in a Kripke category $\KripkeCat$ with respect to a language of substitution instances when all substitution instances $\varphi(\psi_0,\ldots,\psi_n)$, arising for assertions $\psi_0,\ldots,\psi_n$ in that language, hold true at $W$ in $\KripkeCat$. A propositional modal theory is \emph{valid} at $W$ with respect to that language if every assertion in the theory is valid there.
\end{keydefinition*}

We shall consider, for example, sentential substitutions from $\Lofeq$, formulaic substitutions from $\LDiamondofeq$, and these languages with parameters allowed from the world of evaluation. Let $\Val_{\KripkeCat}(W,\mathscr L)$ denote the set of propositional modal validities at $W$ with respect to assertions from $\mathscr L$. Enlarging the substitution language can only shrink the set of validities:
\begin{equation*}
	\mathscr L \of \mathscr L^{+} \qquad \text{implies} \qquad \Val_{\KripkeCat}(W,\mathscr L^{+}) \of \Val_{\KripkeCat}(W,\mathscr L).
\end{equation*}
In particular, one has the inclusions from formulaic-with-parameters to sentential-with-parameters and from those to their parameter-free counterparts.

As in~\cite[section~5]{HamkinsWoloszyn:Modal-model-theory}, it is important to distinguish propositional modal \emph{theories} from proof systems: in the predicate-modal context with parameters, validities at a world need not be closed under necessitation.

\newtheorem*{ConeLemma}{Cone Lemma}%
\begin{ConeLemma}\makeatletter\def\@currentlabelname{cone lemma}\makeatother\label{ConeLemma}
    Suppose $W_0$ is a world in a Kripke category $\KripkeCat$ of $\commonL$-structures. For a propositional modal theory $T$ that is closed under modal propositional substitutions, the normal closure of $T$ is valid at $W_0$ with respect to a given language of substitution instances if and only if $T$ is true at every world on the cone above $W_0$ in $\KripkeCat$.
\end{ConeLemma}
\begin{proof}
	$(\Rightarrow)$ Suppose that the normal closure of $T$ is valid at $W_0$ (with respect to the given substitution language), and let $W$ be any world on the cone above $W_0$.
	Fix $\sigma\in T$ and any substitution instance $\sigma(\bar\psi)$ in the chosen language.
	Since the normal closure contains $\sigma$ and is closed under necessitation, it contains $\necessary\sigma$, and therefore $W_0\satisfies\necessary\sigma(\bar\psi)$.
	By the semantics of $\necessary$, this implies $W\satisfies\sigma(\bar\psi)$.
	As $W$ was arbitrary, $T$ is true at every world on the cone above $W_0$.

	$(\Leftarrow)$ Conversely, assume that $T$ is true at every world on the cone above $W_0$ (for every substitution instance in the given language).
	Let $S$ be the set of all propositional modal assertions $\chi$ such that every substitution instance of $\chi$ (in the given language) is true at every world on that cone.
	Then $S$ contains $T$, and it also contains all tautologies as well as the axioms $\axiomf{K}$ and $\axiomf{Dual}$, since these are valid in every Kripke semantics.
	Moreover, $S$ is closed under modus ponens and necessitation. Indeed, closure under modus ponens is immediate. For necessitation, if every instance of $\chi$ is true at every world on the cone, then for any cone-world $W$ and any accessibility mapping $f\colon W\to U$, $U$ is again on the cone, so every instance of $\chi$ holds at $U$, and hence every instance of $\necessary\chi$ holds at $W$.

	Now let $\theta$ be any formula in the normal closure of $T$, and fix a substitution instance $\theta(\bar\psi)$ in the given language.
	Choose a Hilbert derivation of $\theta$ from propositional tautologies, the axioms $\axiomf{K}$ and $\axiomf{Dual}$, and axioms from $T$, using only modus ponens and necessitation.
	Apply the substitution $\bar p\mapsto\bar\psi$ to every line of this derivation.
	Since $T$ is closed under modal propositional substitutions, every substituted instance of a $T$-axiom is a substitution instance of some member of $T$, and hence is true at every cone-world by assumption.
	And substituted instances of propositional tautologies and of $\axiomf{K}$ and $\axiomf{Dual}$ are true at every world by Kripke semantics.
	Soundness of modus ponens and necessitation then yields that $\theta(\bar\psi)$ is true at every world on the cone above $W_0$, in particular at $W_0$.
	Therefore $\theta$ is valid at $W_0$ (with respect to the given substitution language). As $\theta$ was arbitrary, the normal closure of $T$ is valid at $W_0$.
\end{proof}

We are now prepared to establish that for all Kripke categories, the propositional modal theory \theoryf{S4} is the common lower bound on the propositional modal validities. The following theorem demonstrates that \theoryf{S4} is always valid, at any world in any Kripke category. This lower bound is optimal in the sense that it can be realized as the exact set of validities of some world in a particular Kripke category for a specific language of substitution instances. Notably, arithmetic potentialism exhibits exactly \theoryf{S4} in many instances~\cite{Hamkins:The-modal-logic-of-arithmetic-potentialism}.

\begin{theorem}\label{Theorem.S4-always-valid}
	The propositional modal theory \theoryf{S4} is valid at any world in any Kripke category. Suppose $\KripkeCat$ is a Kripke category of $\commonL$-structures. The propositional modal theory \theoryf{S4} is valid at every world of $\KripkeCat$ with respect to any language of substitution instances, even with parameters allowed. In other words, for any such world $W_0$, we have that
	\begin{equation*}
		\theoryf{S4} \of \Val_{\KripkeCat}(W_0, \LDiamond_{W_0}).
	\end{equation*}
\end{theorem}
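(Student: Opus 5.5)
The plan is to apply the \nameref{ConeLemma} with $T$ the set of all modal propositional substitution instances of the axioms \axiomf{T} and \axiomf{4}. This $T$ is closed under modal propositional substitutions by construction, and its normal closure is exactly \theoryf{S4}. So it suffices to show that every instance of \axiomf{T} and \axiomf{4} is true at every world $W$ on the cone above $W_0$, for every substitution instance in $\LDiamond_{W_0}$ and every assignment of parameters. Parameters cause no difficulty. A parameter $a\in W_0$ appearing in $\psi$ is simply carried along the accessibility mappings: at a world $W$ reached by $g\colon W_0\to W$ it is interpreted as $g(a)$, and under a further map $f\colon W\to U$ as $f(g(a))$. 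We therefore treat the substituted assertions as formulas $\psi$ evaluated at an assignment $\nu$ and verify the axioms semantically.

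For \axiomf{T}, suppose $W\satisfies\necessary\psi[\nu]$. The identity $\mathrm{id}_W$ is a morphism of the category, because $\KripkeCat$ is a category. It lies in $\mathrm{Cone}(W_0)$, since that is a full subcategory containing $W$. Hence $W\satisfies\psi[\mathrm{id}_W\compose\nu]$, which is $W\satisfies\psi[\nu]$.

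For \axiomf{4}, suppose $W\satisfies\necessary\psi[\nu]$, and let $f\colon W\to U$ and $g\colon U\to V$ be accessibility mappings in $\mathrm{Cone}(W_0)$. Closure of the category under composition gives a morphism $g\compose f\colon W\to V$, and $V$ lies on the cone because the cone is full. Thus $V\satisfies\psi[(g\compose f)\compose\nu]=\psi[g\compose(f\compose\nu)]$. Since $g$ was arbitrary, $U\satisfies\necessary\psi[f\compose\nu]$, and since $f$ was arbitrary, $W\satisfies\necessary\necessary\psi[\nu]$.

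The only point needing care is that the semantics is relativized to $\mathrm{Cone}(W_0)$. We must check that worlds reachable from a cone world by composite maps, or by identities, remain in the cone, and that the relevant morphisms are morphisms of the full subcategory. This follows because $W_0^{\uparrow}$ is closed under accessibility, by composition with the map from $W_0$. Granting this, the \nameref{ConeLemma} yields $\theoryf{S4}\of\Val_{\KripkeCat}(W_0,\LDiamond_{W_0})$. The same conclusion holds for every smaller substitution language, by the monotonicity observation preceding the lemma.
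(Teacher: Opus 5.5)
Your proof is correct and takes essentially the same route as the paper: you verify \axiomf{T} via the identity morphism and \axiomf{4} via composition at every world on the cone, then invoke the \nameref{ConeLemma}. The only difference is that the paper also checks \axiomf{K} and \axiomf{Dual} explicitly, whereas you leave them to the normal closure, which already contains them by definition.
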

\begin{proof}
	Recall that $\theoryf{S4}$ is the normal closure of the axioms $\axiomf{K}$, $\axiomf{Dual}$, $\axiomf{T}$ and $\axiomf{4}$.
	Fix a Kripke category $\KripkeCat$ and an arbitrary world $W_0$.
	Let $W$ be any world on the cone above $W_0$.

	We verify that each axiom holds at $W$ for arbitrary substitution instances.

	\textit{$\axiomf{K}$.} Suppose $W\satisfies\necessary(\varphi\implies\psi)$ and $W\satisfies\necessary\varphi$.
	For any accessibility mapping $f\colon W\to U$, we have $U\satisfies\varphi\implies\psi$ and $U\satisfies\varphi$, hence $U\satisfies\psi$.
	Thus $W\satisfies\necessary\psi$.

	\textit{$\axiomf{Dual}$.} This is valid in every Kripke semantics by the definition of $\necessary$ as the dual of $\possible$.

	\textit{$\axiomf{T}$.} If $W\satisfies\necessary\varphi$, then taking the identity accessibility mapping $\id_W\colon W\to W$ yields $W\satisfies\varphi$.

	\textit{$\axiomf{4}$.} Suppose $W\satisfies\necessary\varphi$.
	Let $f\colon W\to U$ be any accessibility mapping and let $g\colon U\to V$ be any accessibility mapping.
	Then $g\compose f\colon W\to V$ is an accessibility mapping, so $V\satisfies\varphi$.
	Since $g$ was arbitrary, $U\satisfies\necessary\varphi$; and since $f$ was arbitrary, $W\satisfies\necessary\necessary\varphi$.

	Therefore every substitution instance of $\axiomf{K}$, $\axiomf{Dual}$, $\axiomf{T}$ and $\axiomf{4}$ is true at every world on the cone above $W_0$.
	Let $T$ be the set of all propositional substitution instances of these axioms. Then $T$ is closed under modal propositional substitutions and its normal closure is $\theoryf{S4}$.
	By the Cone Lemma, $\theoryf{S4}$ is valid at $W_0$ with respect to any language of substitution instances.
\end{proof}

Suppose $\KripkeCat$ is a Kripke category of $\commonL$-structures, and $W$ is a world in that category. We say that an assertion $b$ in the first-order modal language $\LDiamond$ is a \emph{weak button}\label{Definition.weak-button} at $W$ if $W \satisfies \possible \necessary b$; it is \emph{unpushed} at $W$ if $W$ does not satisfy $\necessary b$ yet. Weak buttons $b_0$ and $b_1$ are \emph{independent} at $W$ if $W \satisfies \possible (\necessary b_0 \land \neg b_1)$ and $W \satisfies \possible (\neg b_0 \land \necessary b_1)$. A weak button is a type of a \emph{control statement}, first introduced in~\cite{HamkinsLoewe2008:TheModalLogicOfForcing}. These statements will later be useful in determining upper bounds on the propositional modal validities.

\begin{lemma}\label{Lemma.S4.3-valid-iff-no-independent-buttons}
	Suppose that $W_0$ is a world in a Kripke category $\KripkeCat$ of $\commonL$-structures. The propositional modal theory \theoryf{S4.3} is valid at $W_0$ with respect to a given language of substitution instances if and only if there is no pair of independent weak buttons at $W_0$ in that language. In particular, for any such world $W_0$, we have that
	\begin{equation*}
		\theoryf{S4.3} \of \Val_{\KripkeCat}(W_0, \mathscr L),
	\end{equation*}
	where $\mathscr L$ is the relevant language of substitution instances.
\end{lemma}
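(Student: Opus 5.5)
The plan is to reduce everything to the single axiom \axiomf{.3}. By Theorem~\ref{Theorem.S4-always-valid}, \theoryf{S4} is valid at every world with respect to any substitution language. Hence \theoryf{S4.3} is valid at $W_0$ exactly when, in addition, all substitution instances of \axiomf{.3} and of its necessitations hold at $W_0$. By the \nameref{ConeLemma}, applied to the substitution-closed set of instances of \axiomf{K}, \axiomf{T}, \axiomf{4}, \axiomf{.3}, this is equivalent to every such instance being true at every world on the cone above $W_0$. Negating \axiomf{.3} by \axiomf{Dual}, an instance with $p\mapsto\varphi$, $q\mapsto\psi$ fails at a world $W$ precisely when
\begin{equation*}
	W\satisfies\possible(\necessary\varphi\land\neg\psi)\land\possible(\necessary\psi\land\neg\varphi).
\end{equation*}

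\emph{Independent buttons refute \axiomf{.3}.} Suppose $b_0,b_1$ are independent weak buttons at $W_0$ in the given language. Substitute $p\mapsto b_0$, $q\mapsto b_1$. The two independence clauses $\possible(\necessary b_0\land\neg b_1)$ and $\possible(\neg b_0\land\necessary b_1)$ are literally the displayed failure condition. So this instance of \axiomf{.3} is false at $W_0$, and \theoryf{S4.3} is not valid there.

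\emph{Failure of \axiomf{.3} produces independent buttons.} Conversely, suppose \theoryf{S4.3} is not valid at $W_0$. By the reduction above, some instance with $\varphi,\psi$ in the language fails at some cone world $W$, so $W$ satisfies the displayed condition. Choose accessibility mappings $g\colon W\to U$ with $U\satisfies\necessary\varphi\land\neg\psi$ and $h\colon W\to V$ with $V\satisfies\necessary\psi\land\neg\varphi$. Fix $f\colon W_0\to W$. Since morphisms compose, $g\compose f$ and $h\compose f$ are accessibility mappings from $W_0$.

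The step needing care is the bookkeeping of assignments and parameters. The witnesses live at the cone world $W$, so their free variables or parameters are evaluated along the composites: $\varphi[\nu]$ at $W$ for an assignment $\nu$ into $W_0$ pushed forward by $f$, then by $g$ or $h$. I would therefore take the substitution to have been made at $W_0$ itself, with assignment into $W_0$, so that the same $\varphi,\psi$ are assertions over $W_0$ and the compositions transport the assignment correctly.

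With that in place, set $b_0=\varphi$ and $b_1=\psi$. Then $U$ witnesses $W_0\satisfies\possible\necessary b_0$ and $W_0\satisfies\possible(\necessary b_0\land\neg b_1)$. Symmetrically, $V$ witnesses $W_0\satisfies\possible\necessary b_1$ and $W_0\satisfies\possible(\neg b_0\land\necessary b_1)$. So $b_0,b_1$ are independent weak buttons at $W_0$, proving the equivalence.

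The displayed ``in particular'' inclusion $\theoryf{S4.3}\of\Val_{\KripkeCat}(W_0,\mathscr L)$ is then read under the hypothesis that no such pair exists in $\mathscr L$. Under that reading it is just the reverse direction restated.
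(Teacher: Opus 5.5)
Your proof is correct and is essentially the paper's argument. You rewrite the negation of an \axiomf{.3}-instance as the independence condition and get one direction by substituting the two buttons directly. For the other direction you pull the witnessing maps at a cone world $W$ back along $f\colon W_0\to W$, with Theorem~\ref{Theorem.S4-always-valid} and the Cone Lemma supplying the rest. Your care with assignments is appropriate, and so is your conditional reading of the ``in particular'' clause: read unconditionally it would fail, e.g.\ at infinite worlds of $\Sets$ with parameters, whose validities are exactly \theoryf{S4.2}.
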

\begin{proof}
	Fix a world $W$ on the cone above $W_0$ and let $\varphi$ and $\psi$ be any substitution instances in the given language.
	By propositional reasoning and duality, the negation of the instance of $\axiomf{.3}$,
\begin{equation*}
		\necessary(\necessary\varphi\implies\psi)\,\vee\,\necessary(\necessary\psi\implies\varphi),
\end{equation*}
	is equivalent to
\begin{equation*}
		\possible(\necessary\varphi\wedge\neg\psi)\ \wedge\ \possible(\necessary\psi\wedge\neg\varphi).
\end{equation*}
	Thus, $\axiomf{.3}$ fails at $W$ under some substitution instance iff there are two independent weak buttons at $W$ (namely $\varphi$ and $\psi$).

	$(\Rightarrow)$ If $\theoryf{S4.3}$ is valid at $W_0$, then in particular every instance of $\axiomf{.3}$ holds at $W_0$, and hence no pair of independent weak buttons exists at $W_0$ in the given language.

	$(\Leftarrow)$ Conversely, assume that there is no pair of independent weak buttons at $W_0$ in the given language.
	Suppose, towards a contradiction, that $\axiomf{.3}$ fails at some world $W$ on the cone above $W_0$ under a substitution instance $\varphi,\psi$.
	Then $\varphi$ and $\psi$ are independent weak buttons at $W$.
	Let $f\colon W_0\to W$ witness that $W$ lies on the cone above $W_0$.
	The witnesses for independence at $W$ compose with $f$ to give witnesses for independence at $W_0$, contradicting the assumption.
	Therefore, every substitution instance of $\axiomf{.3}$ holds at every world on the cone above $W_0$.

	Since $\theoryf{S4}$ is valid at every world in every Kripke category (Theorem \ref{Theorem.S4-always-valid}), all axioms of $\theoryf{S4}$ hold throughout that cone as well.
	By the Cone Lemma, the normal closure $\theoryf{S4.3}$ is valid at $W_0$.
\end{proof}

A direct method to obtain lower bounds on the propositional modal validities of a world involves examining particular shapes of diagrams in the cone above it. One of the most notable shapes is an \emph{upwards span}, or simply a \emph{span}, illustrated below. A morphism constitutes the simplest shape of all, and can be viewed as a \emph{degenerate span}.
$$\begin{tikzcd}
		U_0 &                                        & U_1 \\
		& W \arrow[lu, "f_0"] \arrow[ru, "f_1"'] &
	\end{tikzcd}$$

Suppose $A$ is a (possibly empty) set of parameters from a world $W$. We say that a span $U_0 \xleftarrow{f_0} W \xrightarrow{f_1} U_1$ is \emph{amalgamable} for parameters from $A$, or just \emph{$A$-amalgamable}, if it can be completed in the category by a downwards span $U_0 \xrightarrow{g_0} U \xleftarrow{g_1} U_1$ to form a square diagram that commutes over $A$, meaning that $(g_0 \compose f_0)(a) = (g_1 \compose f_1)(a)$ for any $a \in A$. Likewise, the span is \emph{factorable} for parameters from $A$, or \emph{$A$-factorable}, if it can be completed by either $g_0 \colon U_0 \to U_1$ or $g_1 \colon U_1 \to U_0$ in the category to form a triangular diagram that commutes for any $a \in A$. A degenerate span $W \xrightarrow{f} U$ is \emph{invertible} for parameters from $A$, or just \emph{$A$-invertible}, if it can be completed by some $g \colon U \to W$ in the category so that $(g \compose f){\restrict_A} = \id_A$. Each situation is illustrated below; the symbol $\circlearrowleft_A$ in the center of a diagram indicates that the diagram commutes over $A$. If $A$ is empty, then the commutativity requirement is vacuous. For example, an $\emptyset$-amalgamable span is the same as a \emph{convergent}, or a \emph{directed}, span.

\begin{figure}[H]
	\tikzset{column sep=small, ampersand replacement=\&}
	\begin{floatrow}[3]
		\centering
		\ffigbox{\adjustbox{scale=.9}{\begin{tikzcd}
				\& U    \arrow[dd, phantom, "\prescript{\phantom{A}}{}{\circlearrowleft}_{A}", midway] \& \\
				U_0 \arrow[ru, dashed] \& \& U_1 \arrow[lu, dashed] \\
				\& W \arrow[lu, "f_0"] \arrow[ru, "f_1"'] \&
			\end{tikzcd}}}{}
		\ffigbox{\adjustbox{scale=.9}{\begin{tikzcd}
				U_0 \arrow[rr, dashed] \& \arrow[d, phantom, "\prescript{\phantom{A}}{}{\circlearrowleft}_{A}", midway] \& U_1 \\
				\& W \arrow[lu, "f_0"] \arrow[ru, "f_1"'] \&
			\end{tikzcd}}}{}
		\ffigbox{\adjustbox{scale=.9}{\begin{tikzcd}
				W \arrow[rr, bend left=20, "f"] \& \prescript{\phantom{A}}{}{\circlearrowleft}_{A} \& U \arrow[ll, bend left=20, dashed]
			\end{tikzcd}}}{}
	\end{floatrow}
\end{figure}

Consider the cone above a world $W_0$. If, for every accessibility mapping $f \colon W_0 \to W$, every span $U_0 \leftarrow W \rightarrow U_1$ is $f(A)$-amalgamable, then we say the cone \emph{$A$-amalgamable}. If every such span is $f(A)$-factorable, then the cone is called \emph{$A$-factorable}. Similarly, if every degenerate span $W \rightarrow U$ is $f(A)$-invertible, the cone is \emph{$A$-invertible}. For any such $A$-property associated with the cone above $W_0$, if $A$ is equal to $W_0$, we usually omit $A$-prefix. Consequently, for example, we write ``factorable'' instead of ``$W_0$-factorable.''

\newtheorem*{Lowerboundstheorem}{First Lower Bounds Theorem}%
\begin{Lowerboundstheorem}\makeatletter\def\@currentlabelname{first lower bounds theorem}\makeatother\label{Lowerboundstheorem}
	Suppose that $W_0$ is a world in a Kripke category $\KripkeCat$ of $\commonL$-structures and $A \of W_0$ is a set of parameters.
	\begin{enumerate}
		\item If the cone above $W_0$ is $A$-amalgamable, then the propositional modal theory \theoryf{S4.2} is valid at $W_0$ for substitution instances in the language $\LDiamond$ with parameters from $A$.
		\item If the cone above $W_0$ is $A$-factorable, then the propositional modal theory \theoryf{S4.3} is valid at $W_0$ for all substitution instances in the language $\LDiamond$ with parameters from $A$.
		\item If the cone above $W_0$ is $A$-invertible, then the propositional modal theory \theoryf{S5} is valid at $W_0$ for all substitution instances in the language $\LDiamond$ with parameters from $A$.
	\end{enumerate}
\end{Lowerboundstheorem}
\begin{proof}
	Let $W$ be a world on the cone above $W_0$ and let $f\colon W_0\to W$ witness this.
	Since \theoryf{S4} is always valid in any Kripke category (Theorem \ref{Theorem.S4-always-valid}), $\theoryf{S4}$ is valid at $W$.
	Therefore, by \nameref{ConeLemma}, it is enough to show that $\axiomf{.2}$, $\axiomf{.3}$ and $\axiomf{5}$ hold at $W$ for all substitution instances with parameters from $f(A)$.

	\smallskip
	\noindent\textit{(1) $A$-amalgamable $\Rightarrow$ $\axiomf{.2}$ at $W$.}
	Suppose $W\satisfies\possible\necessary\varphi[f(\bar a)]$ where $\bar a$ is a tuple of parameters from $A$.
	Then there is an accessibility mapping $f_0\colon W\to U_0$ such that
\begin{equation*}
		U_0\satisfies\necessary\varphi[(f_0\compose f)(\bar a)].
\end{equation*}
	Let $f_1\colon W\to U_1$ be arbitrary.
	Since the cone above $W_0$ is $A$-amalgamable, the span $U_0\xleftarrow{\,f_0\,}W\xrightarrow{\,f_1\,}U_1$ is $f(A)$-amalgamable, so there are accessibility mappings $g_0\colon U_0\to U$ and $g_1\colon U_1\to U$ such that
\begin{equation*}
		\bigl(g_0\compose f_0\compose f\bigr)\restrict A = \bigl(g_1\compose f_1\compose f\bigr)\restrict A.
\end{equation*}
	From $U_0\satisfies\necessary\varphi[(f_0\compose f)(\bar a)]$ and the accessibility mapping $g_0$ we get
\begin{equation*}
		U\satisfies\varphi[(g_0\compose f_0\compose f)(\bar a)].
\end{equation*}
	By commutativity on $A$, the parameter tuples agree, so also $U\satisfies\varphi[(g_1\compose f_1\compose f)(\bar a)]$.
	Thus $g_1$ witnesses
\begin{equation*}
		U_1\satisfies\possible\varphi[(f_1\compose f)(\bar a)].
\end{equation*}
	Since $f_1$ was arbitrary, we have $W\satisfies\necessary\possible\varphi[f(\bar a)]$, that is the instance of $\axiomf{.2}$ holds at $W$.

	\smallskip
	\noindent\textit{(2) $A$-factorable $\Rightarrow$ $\axiomf{.3}$ at $W$.}
	Suppose towards a contradiction that there are two independent weak buttons $b_0[f(\bar a)]$ and $b_1[f(\bar a)]$ at $W$, with $\bar a$ a tuple of parameters from $A$.
	Then there exist accessibility mappings $f_0\colon W\to U_0$ and $f_1\colon W\to U_1$ such that
\begin{align*}
		U_0&\satisfies\necessary b_0[(f_0\compose f)(\bar a)]\wedge \neg b_1[(f_0\compose f)(\bar a)],\\
		U_1&\satisfies\necessary b_1[(f_1\compose f)(\bar a)]\wedge \neg b_0[(f_1\compose f)(\bar a)].
\end{align*}
	Because the cone above $W_0$ is $A$-factorable, the span $U_0\xleftarrow{\,f_0\,}W\xrightarrow{\,f_1\,}U_1$ is $f(A)$-factorable.
	Hence either there is an accessibility mapping $g_0\colon U_0\to U_1$ with $\bigl(g_0\compose f_0\compose f\bigr)\restrict A = \bigl(f_1\compose f\bigr)\restrict A$, or there is an accessibility mapping $g_1\colon U_1\to U_0$ with $\bigl(g_1\compose f_1\compose f\bigr)\restrict A = \bigl(f_0\compose f\bigr)\restrict A$.
	In the first case, from $U_0\satisfies\necessary b_0[(f_0\compose f)(\bar a)]$ we obtain
\begin{equation*}
		U_1\satisfies b_0[(g_0\compose f_0\compose f)(\bar a)],
\end{equation*}
	which by commutativity contradicts $U_1\satisfies\neg b_0[(f_1\compose f)(\bar a)]$.
	The second case is symmetric.
	Therefore no pair of independent weak buttons exists at $W$, and by Lemma \ref{Lemma.S4.3-valid-iff-no-independent-buttons} the axiom $\axiomf{.3}$ holds at $W$ for every substitution instance with parameters from $f(A)$.

	\smallskip
	\noindent\textit{(3) $A$-invertible $\Rightarrow$ $\axiomf{5}$ at $W$.}
	Suppose $W\satisfies\possible\necessary\varphi[f(\bar a)]$ where $\bar a$ is a tuple of parameters from $A$.
	Then there exists an accessibility mapping $g\colon W\to U$ such that $U\satisfies\necessary\varphi[(g\compose f)(\bar a)]$.
	Since the cone above $W_0$ is $A$-invertible, the morphism $g$ has a pseudo-inverse $h\colon U\to W$ with $(h\compose g)(x)=x$ for all $x\in f(A)$.
	In particular, $(h\compose g\compose f)(\bar a)=f(\bar a)$.
	As $U\satisfies\necessary\varphi[(g\compose f)(\bar a)]$ and $h$ is an accessibility mapping, we conclude
\begin{equation*}
		W\satisfies\varphi[(h\compose g\compose f)(\bar a)]\;=\;\varphi[f(\bar a)].
\end{equation*}
	Thus the instance of $\axiomf{5}$ holds at $W$.
\end{proof}

A collection of statements forms a \emph{labeling}\label{Definition.Labeling} of a propositional Kripke frame if there exists an assignment of these statements to the frame such that the modalities precisely align with the frame order. More specifically, a labeling of a propositional Kripke frame $F$ for a world $W_0$ in a Kripke category is an assignment to each node $w \in F$ a statement $\Phi_w$ from a given language. This assignment has to fulfill the following conditions. First, $W_0$ needs to satisfy $\Phi_{w_0}$, where $w_0$ is a designated initial element of $F$. Next, every world in the cone above $W_0$ satisfies exactly one $\Phi_w$. And finally, the relation $w \leq_F v$ holds if and only if each world in the cone satisfying $\Phi_w$ can access a world that satisfies $\Phi_v$. The following lemma is just a minor generalization of~\cite[lemma 9]{HamkinsLeibmanLoewe2015:StructuralConnectionsForcingClassAndItsModalLogic}.

\newtheorem*{LabelingLemma}{Labeling Lemma}%
\begin{LabelingLemma}\makeatletter\def\@currentlabelname{labeling lemma}\makeatother\label{LabelingLemma}
	Suppose that $w \mapsto \Phi_w$ is a labeling of a finite frame $F$ for a world $W_0$ in a Kripke category $\KripkeCat$ of $\commonL$-structures and that $w_0$ is an initial world of $F$. Then, for any propositional Kripke model whose frame is $F$, there is an assignment $p \mapsto \psi_p$ of the propositional variables to assertions in the Boolean closure of the language of the labeling statements such that for any propositional modal assertion $\varphi$, we have
	\begin{equation*}
		(M,w_0) \satisfies \varphi(p_0,\ldots,p_n) \qquad \text{if and only if} \qquad W_0 \satisfies \varphi(\psi_{p_0},\ldots,\psi_{p_n}).
	\end{equation*}
\end{LabelingLemma}

The~\nameref{LabelingLemma} is the main tool for translating finite Kripke-frame arguments into statements about a given world. In practice we use it through \emph{control statements}: finite families of assertions whose modal behavior is sufficiently uniform to label finite frames.

The control statements used below are standard. A \emph{button} is an assertion that is necessarily possibly necessary; it is \emph{pushed} if already necessary and \emph{unpushed} otherwise. A \emph{switch} is an assertion whose truth value can necessarily be toggled on and off. A \emph{dial} is a list $d=\<d_i\mid i<n>$ such that necessarily exactly one $d_i$ holds, but each $d_i$ is possible. A \emph{ratchet} is a list $\<r_i\mid 1\le i\le n>$ whose later entries necessarily imply the earlier ones and whose volume can be increased one step at a time; by convention, the empty sequence is an uncranked ratchet of length $0$.

We call a button $b$ \emph{pure} if
\begin{equation*}
    \necessary(b \rightarrow \necessary b).
\end{equation*}
Equivalently (since $\axiomf{T}$ holds in every Kripke category), $b\leftrightarrow \necessary b$; in particular, for a pure button, being unpushed is the same as being false.

\begin{observation}\label{Observation.Under-S4.2-weak-buttons-are-buttons}
	Suppose that $\KripkeCat$ is a Kripke category of $\commonL$-structures, and $W_0$ is a world in $\KripkeCat$ at which the propositional modal theory $\theoryf{S4.2}$ is valid with respect to a language of substitution instances. Then, at $W_0$, every weak button expressible in the language of substitutions is a button.
\end{observation}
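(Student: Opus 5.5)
Since $\theoryf{S4.2}$ is valid at $W_0$, the substitution instance of \axiomf{.2} for $b$ holds at $W_0$. We then use \axiomf{4} to propagate this upward, which shows that $\possible\necessary b$ is itself necessary at $W_0$. That is exactly the definition of a button: $W_0\satisfies\necessary\possible\necessary b$.

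Concretely, fix a weak button $b$ at $W_0$ in the substitution language, so $W_0\satisfies\possible\necessary b$. The substitution instance of \axiomf{4} with $p:=\necessary b$ gives
\begin{equation*}
W_0\satisfies \possible\necessary b\implies \possible\necessary\necessary b.
\end{equation*}
This is the dual form of \axiomf{4}, i.e.\ $\possible\possible q\implies\possible q$ together with monotonicity. More directly, we argue semantically, since \axiomf{4} holds semantically in every Kripke category by composition of morphisms (Theorem~\ref{Theorem.S4-always-valid}). Pick $f\colon W_0\to U$ with $U\satisfies\necessary b[f\compose\nu]$. By composition, $U\satisfies\necessary\necessary b$, so $W_0\satisfies\possible\necessary(\necessary b)$.

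Now we apply the instance of \axiomf{.2} with $p:=\necessary b$. Note that $\necessary b$ lies in the substitution language whenever $b$ does, since the relevant languages (such as $\LDiamond$, possibly with parameters) are closed under modal operators. This yields $W_0\satisfies\necessary\possible\necessary b$, so $b$ is a button.

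A simpler variant avoids substituting $\necessary b$. Apply \axiomf{.2} to $b$ itself to get $W_0\satisfies\necessary\possible b$. However, this falls short of $\necessary\possible\necessary b$, so the substitution $p:=\necessary b$ really is needed.

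The only delicate point is therefore whether the language of substitutions is closed under prefixing $\necessary$. If it is not (e.g.\ purely $\commonL$-sentences), we would instead use the Cone Lemma direction. Validity of the normal logic $\theoryf{S4.2}$ at $W_0$ gives validity of $\necessary$(\axiomf{.2}) and of the derived theorem $\possible\necessary p\implies\necessary\possible\necessary p$ of $\theoryf{S4.2}$. This is a theorem of \theoryf{S4.2}, since \axiomf{.2} applied to $\necessary p$ plus \axiomf{4} derives it. Taking the substitution instance $p:=b$ of this theorem then gives the result with only $b$ substituted. I expect this second route to be the cleanest: it cites that $\possible\necessary p\to\necessary\possible\necessary p\in\theoryf{S4.2}$ and uses validity of the whole theory.
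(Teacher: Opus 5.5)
Your proof is correct and follows the paper's argument. From $\possible\necessary b$ you get $\possible\necessary\necessary b$ via \axiomf{4}, then apply \axiomf{.2} to $\necessary b$ to conclude $\necessary\possible\necessary b$. (The \axiomf{4} instance you want is $p:=b$ pushed under $\possible$, not $p:=\necessary b$, but your semantic composition argument does this correctly.) Your second route, noting that $\possible\necessary p\implies\necessary\possible\necessary p$ already belongs to the normal theory $\theoryf{S4.2}$ so that only $b$ itself needs to be substituted, makes explicit the substitution-closure point that the paper's two-line proof leaves implicit.
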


\begin{proof}
	Suppose that $W_0 \satisfies \possible \necessary \varphi$ for an assertion $\varphi$ in the language of substitutions, so that $\varphi$ is a weak button. Since $\theoryf{S4}$ is valid, we obtain $W_0 \satisfies \possible \necessary \necessary \varphi$. Furthermore, due to the validity of $\theoryf{S4.2}$, we have $W_0 \satisfies \necessary \possible \necessary \varphi$. Consequently, $\varphi$ is a button at $W_0$.
\end{proof}

A \emph{railyard} is a labeling of a finite pre-tree frame. A \emph{tree} is a partially ordered set $\<T,\leq>$ such that for each $t\in T$ the set $\{u\mid u\leq t\}$ is linearly ordered. A \emph{pretree} is obtained by replacing nodes of a tree by clusters of equivalent nodes.

A control statement is \emph{normal} if its controlling condition is already necessary at the world of evaluation; buttons, switches, dials, and fully cranked ratchets are typical examples. The following upper-bound theorem collects the standard frame-labeling consequences we shall use later; it is adapted from the potentialist literature cited in~\cite{HamkinsLinnebo:Modal-logic-of-set-theoretic-potentialism,Hamkins:The-modal-logic-of-arithmetic-potentialism,HamkinsWoloszyn:Modal-model-theory}.

\newtheorem*{Upperboundstheorem}{First Upper Bounds Theorem}%
\begin{Upperboundstheorem}\makeatletter\def\@currentlabelname{first upper bounds theorem}\makeatother\label{Upperboundstheorem}
	Suppose $W_0$ is a world in a Kripke category of $\commonL$-structures.
	\nobreak
	\begin{enumerate}
		\item The propositional modal validities of $W_0$ are always contained in the trivial propositional modal theory \theoryf{Triv}.
		\item If $W_0$ has arbitrarily large finite independent families of switches or dials, then the propositional modal validities of $W_0$ are contained in the propositional modal theory \theoryf{S5}.
		\item If $W_0$ has arbitrarily large finite independent families of switches or dials together with arbitrarily long finite uncranked ratchets, and these are independent, then the propositional modal validities of $W_0$ are contained in the propositional modal theory \theoryf{S4.3}.
		\item If $W_0$ has arbitrarily large finite independent families of switches or dials together with arbitrarily large finite independent families of unpushed buttons, and these are independent, then the propositional modal validities of $W_0$ are contained in the propositional modal theory \theoryf{S4.2}.
		\item If $W_0$ admits a railyard labeling for every finite pretree $T$---and it suffices to handle only pretrees with all clusters the same size and all branching clusters having the same degree---then the propositional modal validities of $W_0$ are contained in the propositional modal theory \theoryf{S4}.
	\end{enumerate}
	In each case, the relevant language of substitution instances would be any nonempty language closed under Boolean connectives and containing the control statements.
\end{Upperboundstheorem}

\subsection{Grzegorczyk logic}
The propositional modal theory \theoryf{Grz} will be used repeatedly below. Following~\cite{Woloszyn:Grzegorczyk}, call $\varphi$ \emph{penultimate} if it is true, possibly false, and once false remains necessarily false; equivalently,
\begin{equation*}
	\varphi \land \possible\neg\varphi \land \necessary(\neg\varphi \implies \necessary\neg\varphi).
\end{equation*}
An assertion \emph{attains} a penultimate truth-value if either it or its negation is penultimate. The characterization from~\cite{Woloszyn:Grzegorczyk} yields the following lower-bound theorem.

\newtheorem*{Lowerboundstheorem2}{Second Lower Bounds Theorem}%
\begin{Lowerboundstheorem2}\makeatletter\def\@currentlabelname{second lower bounds theorem}\makeatother\label{Lowerboundstheorem2}
	Suppose that $W_0$ is a world in a Kripke category $\KripkeCat$ of $\commonL$-structures, and that a propositional modal theory $\Lambda \fo \theoryf{S4}$ is valid at $W_0$ for a given language of substitution instances. If for every world $W$ accessible from $W_0$ and every assertion $\varphi$ contingent at $W$ possibly attains a penultimate truth-value, then the propositional modal theory $\Lambda+\theoryf{Grz}$ is valid at $W_0$ with respect to the language of substitution instances.
\end{Lowerboundstheorem2}

The corresponding upper bounds are the direct Kripke-category analogues of results from~\cite{Woloszyn:Grzegorczyk}.

\newtheorem*{Upperboundstheorem2}{Second Upper Bounds Theorem}%
\begin{Upperboundstheorem2}\makeatletter\def\@currentlabelname{second upper bounds theorem}\makeatother\label{Upperboundstheorem2}
	Suppose $W_0$ is a world in a Kripke category of $\commonL$-structures.
	\nobreak
	\begin{enumerate}
		\item If $W_0$ has arbitrarily long finite uncranked ratchets, then the propositional modal validities of $W_0$ are contained in the propositional modal theory \theoryf{Grz.3}.
		\item If $W_0$ has arbitrarily large finite independent families of unpushed buttons, then the propositional modal validities of $W_0$ are contained in the propositional modal theory \theoryf{Grz.2}.
		\item If $W_0$ has a railyard labeling for every finite tree $T$---and it suffices only to handle trees where all branching nodes have the same degree---then, the propositional modal validities are contained in the propositional modal theory \theoryf{Grz}.
	\end{enumerate}
	In each case, the relevant language of substitution instances would be any language closed under Boolean connectives and containing the respective control statements.
\end{Upperboundstheorem2}

\section{Modality elimination in the category of sets}\label{Section.Modality-elimination-in-Sets}

In this section we prove the~\nameref{Modalityeliminationtheorem.Sets} for the category of sets, with respect to all classes of morphisms specified in the~\nameref{Maintheorem.Sets}. This theorem serves as the backbone of the major results established later.

\newtheorem*{Modalityeliminationtheorem.Sets}{Modality-and-Quantifier Elimination Theorem}%
\begin{Modalityeliminationtheorem.Sets}\makeatletter\def\@currentlabelname{modality-and-quantifier elimination theorem}\makeatother\label{Modalityeliminationtheorem.Sets}
The categories of sets and finite sets with either functions, surjections, injections, inclusions, bijections, or identities all admit simultaneous modality and quantifier elimination down to the class of finite Boolean combinations of atomic formulas and sentences asserting that there are precisely $n$ elements. In other words, every formula in the first-order modal language of equality $\LDiamondofeq$ is equivalent to such a Boolean combination in each of these categories.
\end{Modalityeliminationtheorem.Sets}

A partition $P$ of a finite set $X$ can be seen as a formula in the language of equality $\Lofeq$ that describes a satisfiable pattern of equalities and inequalities for $|X|$-many variables. More precisely, by denoting the block of $P$ to which $j$ belongs as $[j]$, the desired formula is given by
\begin{equation*}
	\bigwedge_{\substack{i<j\\ [i]=[j]}} (x_i = x_j)
	\ \land\
	\bigwedge_{\substack{i<j\\ [i]\neq [j]}} (x_i \neq x_j).
\end{equation*}
The illustration below demonstrates the correspondence between partitions and satisfiable equality/inequality patterns, using several examples of a partition of four. The black dots represent distinct variables. We identify a pair of distinct variables if they lie in the same monochromatic region. This illustration is inspired by the \TeX{} Stack Exchange post \emph{``Set Partitions and tikz''}\footnote{\url{https://tex.stackexchange.com/questions/578823/set-partitions-and-tikz}}.
\begin{figure}[H]
$$
	\begin{tikzpicture}[scale=.6,every node/.style={transform shape},
			Line/.style = {
					line width=4mm,
					line cap=round
				},
			Area/.style = {
					line width=5mm,
					fill,
					line cap=round,
					rounded corners= 0.5mm
				}
		]
		\begin{scope}
			\foreach \k in {1,...,4}{
					\draw[fill](90*\k+225:1cm) circle (1mm) coordinate (N\k);
				}
			\begin{scope}[on background layer]
				\draw[Line,SalmonSunrise](N1) -- (N2);
			\end{scope}
		\end{scope}
		\begin{scope}[shift={(4,0)}]
			\foreach \k in {1,...,4}{
					\draw[fill](90*\k+225:1cm) circle (1mm) coordinate (N\k);
				}
			\begin{scope}[on background layer]
				\draw[Area,MintBreeze] (N1) -- (N2) -- (N3) -- cycle;
			\end{scope}
		\end{scope}
		\begin{scope}[shift={(8,0)}]
			\foreach \k in {1,...,4}{
					\draw[fill](90*\k+225:1cm) circle (1mm) coordinate (N\k);
				}
			\begin{scope}[on background layer]
				\begin{scope}[blend group=multiply]\draw[Line,LemonChiffon](N1) -- (N3); \draw[Line,SkyBlueDream](N2) -- (N4); \end{scope}
			\end{scope}
		\end{scope}
		\begin{scope}[shift={(12,0)}]
			\foreach \k in {1,...,4}{
					\draw[fill](90*\k+225:1cm) circle (1mm) coordinate (N\k);
				}
		\end{scope}
		\begin{scope}[shift={(16,0)}]
			\foreach \k in {1,...,4}{
					\draw[fill](90*\k+225:1cm) circle (1mm) coordinate (N\k);
				}
			\begin{scope}[on background layer]
				\draw[Area,LavenderHaze](N1) -- (N2) -- (N3) -- (N4) -- cycle;
			\end{scope}
		\end{scope}
	\end{tikzpicture}$$
	\refstepcounter{figure}
\end{figure}

Accessibility mappings in the Kripke categories of sets with injective functions preserve all atomic and negated atomic formulas. Thus, in each of $\Sets[$\into$]$, $\FinSets[$\into$]$, and $\InfSets[$\into$]$, the formula $\possible P(\bar x)$ is equivalent precisely to $P(\bar x)$ for each partition $P$. Naturally, the same observation holds for the corresponding categories of inclusions, bijections, and identities\label{Paragraph.Sensible-Inj}. For the remaining cases, let us order the set of all such partitions of a finite set by the \emph{refinement} relation. More precisely, given partitions $P$ and $Q$, we write $P \leq Q$ if every block of $P$ is a subset of some block of $Q$:
\begin{figure}[H]
$$\begin{tikzpicture}[scale=.6,every node/.style={transform shape},
			Line/.style = {
					line width=4mm,
					line cap=round
				},
			Area/.style = {
					line width=5mm,
					fill,
					line cap=round,
					rounded corners= 0.5mm
				}
		]
		\begin{scope}
			\foreach \k in {1,...,4}{
					\draw[fill](90*\k+225:1cm) circle (1mm) coordinate (N\k);
				}
			\begin{scope}[on background layer]
				\draw[Line,SalmonSunrise](N3) -- (N4);
			\end{scope}
		\end{scope}
		\begin{scope}[shift={(4,0)}]
			\foreach \k in {1,...,4}{
					\draw[fill](90*\k+225:1cm) circle (1mm) coordinate (N\k);
				}
			\begin{scope}[on background layer]
				\draw[Area,MintBreeze] (N2) -- (N3) -- (N4) -- cycle;
			\end{scope}
		\end{scope}
		\begin{scope}[shift={(9,0)}]
			\foreach \k in {1,...,4}{
					\draw[fill](90*\k+225:1cm) circle (1mm) coordinate (N\k);
				}
			\begin{scope}[on background layer]
				\draw[Area,MintBreeze] (N2) -- (N3) -- (N4) -- cycle;
				\draw[Line,SalmonSunrise] (N3) -- (N4);
			\end{scope}
		\end{scope}
		\begin{scope}[shift={(1.9,0)},scale=1.66666666667]
			\node[] at (0,0) {${\leq}$};
		\end{scope}
		\begin{scope}[shift={(6.5,0)},scale=1.66666666667]
			\node[] at (0,0) {$\Iff$};
		\end{scope}
	\end{tikzpicture}$$
\refstepcounter{figure}
\label{Figure.Refinement}
\end{figure}

It is not difficult to observe that in the categories of sets and functions and of sets and surjective functions---and likewise in their full subcategories of finite sets and of infinite sets---the relation $P \leq Q$ holds if and only if $P(\bar x)$ entails $\possible Q(\bar x)$. The only point requiring comment in the infinite subcategories is that identifying only finitely many parameter-values in an infinite set still yields an infinite quotient. On the other hand, suppose $W \models \possible Q[\bar x]$. So there exists a partition $P \leq Q$ such that $W \models P[\bar x]$. Since there can be only finitely many partitions refining $Q$, there are only finitely many corresponding $P(\bar x)$ that hold given $\possible Q(\bar x)$. Consequently,
\begin{equation*}
	\Sets \models \forall \bar x \big(\possible Q(\bar x) \iff \bigvee_{P \leq Q} P(\bar x) \big) \quad \text{and} \quad \Sets[$\onto$] \models \forall \bar x \big(\possible Q(\bar x) \iff \bigvee_{P \leq Q} P(\bar x) \big),
\end{equation*}
with the same equivalences holding in the corresponding finite and infinite full subcategories.

Every quantifier-free formula $\varphi(\bar x)$ in the non-modal language of equality $\Lofeq$ is either inconsistent or equivalent to a disjunction of finitely many partitions $P(\bar x)$~\cite[lemma 1.5.6]{ChangKeisler1990:ModelTheory}. Hence, inductively, we arrive at the following.

\begin{lemma}\label{Lemma.QF-QE-ME}
	Suppose $\varphi(\bar x)$ is a satisfiable quantifier-free formula in the first-order modal language of equality $\LDiamondofeq$. The formula $\varphi(\bar x)$ is equivalent to a finite disjunction of partitions of the free variables of $\bar x$, in any of the categories of sets, finite sets, or infinite sets with either functions, surjections, injections, inclusions, bijections, or identities.
\end{lemma}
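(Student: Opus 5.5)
I will argue by structural induction on the quantifier-free modal formula $\varphi(\bar x)$, working with an explicitly fixed finite tuple $\bar x$ of variables that contains every free variable of $\varphi$. The induction hypothesis will be slightly stronger than the statement: every quantifier-free $\LDiamondofeq$-formula whose free variables lie among $\bar x$ is equivalent, in the category under consideration, to a finite disjunction of partitions of $\bar x$. The empty disjunction $\bot$ is allowed for unsatisfiable formulas. Satisfiability is then used only at the end, to note that the disjunction is nonempty. Because the partitions $P(\bar x)$ are pairwise exclusive and jointly exhaustive, and every tuple realizes exactly one of them, the class of finite disjunctions of partitions is closed under Boolean operations. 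Concretely, negation sends $\bigvee_{P\in S}P$ to $\bigvee_{P\notin S}P$, and conjunction corresponds to intersecting the index sets.

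The atomic case $x_i=x_j$ is the disjunction of all partitions of $\bar x$ in which $i$ and $j$ share a block. This is the content of the cited result~\cite[lemma 1.5.6]{ChangKeisler1990:ModelTheory}, and it holds in every world, since a world is just a set. The Boolean cases follow from the closure remark above. Quantifiers do not occur, so the only remaining case is $\possible\psi$; the case $\necessary\psi$ follows by duality via $\neg\possible\neg$.

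For $\possible\psi$, the induction hypothesis gives $\psi\equiv\bigvee_{Q\in S}Q(\bar x)$. Since $\possible$ commutes with disjunctions semantically, $\possible\psi\equiv\bigvee_{Q\in S}\possible Q(\bar x)$. It therefore suffices to rewrite each $\possible Q(\bar x)$, and here I split by the class of morphisms.

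For injections, inclusions, bijections and identities, I invoke the paragraph noting that these morphisms preserve atomic and negated atomic formulas. Given that, together with the reflexivity supplied by identity maps, we get $\possible Q\equiv Q$.

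For functions and surjections, I use the displayed equivalence $\possible Q(\bar x)\iff\bigvee_{P\le Q}P(\bar x)$, which is stated for $\Sets$, $\Sets[$\onto$]$ and their finite and infinite full subcategories. The one point needing care is that every world of the category actually realizes the required collapse. I would reread the remark on infinite subcategories to confirm this: a quotient identifying finitely many values of an infinite set is still infinite, and for finite sets the quotient is again finite. A further subtlety arises with the empty world when $\bar x$ is nonempty: there, no assignment exists, so the equivalence is vacuous.

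I expect the main obstacle to be this bookkeeping rather than any single computation. The equivalence of $\possible Q$ with a disjunction must hold uniformly across every category listed, including the empty set. The choice of variable context also matters: a subformula may mention fewer variables than $\bar x$, and its partitions must be lifted to partitions of $\bar x$ by disjoining over all extensions. Fixing $\bar x$ once and inducting over formulas in that fixed context handles this lifting automatically.
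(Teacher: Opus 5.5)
Your proposal is correct and is essentially the paper's argument. The paper obtains the lemma ``inductively'' from the Chang--Keisler normal form for quantifier-free $\Lofeq$-formulas, together with the equivalences established just before it: $\possible P(\bar x)\leftrightarrow P(\bar x)$ for injections, inclusions, bijections and identities, and $\possible Q(\bar x)\leftrightarrow\bigvee_{P\leq Q}P(\bar x)$ for functions and surjections. That is exactly your modal step. Your strengthened induction hypothesis, with a fixed variable context and the empty disjunction allowed, is a worthwhile tightening of the paper's terse ``inductively'': a satisfiable formula can contain unsatisfiable subformulas such as $x_0\neq x_0$, which a literal induction on the lemma as stated would not cover.
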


It is not hard to show that every formula $\varphi(\bar x)$ in the non-modal language of equality $\Lofeq$ is equivalent to a finite Boolean combination of partitions of the free-variables of $\bar x$ and sentences of the form $\sigma_k$ asserting that there are precisely $k$ elements (see~\cite[theorem 1.5.7]{ChangKeisler1990:ModelTheory}). Equivalently, every satisfiable formula $\varphi(\bar x)$ in the language $\Lofeq$ is equivalent to a finite disjunction of assertions of the form $Q(\bar x) \land \sigma_{\geq k}$ and $Q(\bar x) \land \sigma_{\leq k}$, where $\sigma_{\geq k}$ asserts that there are at least $k$ elements and $\sigma_{\leq k}$ asserts that there are at most $k$ elements. One can therefore eliminate quantifiers down to the class of such disjunctions. It remains to show that modalities can also be eliminated from formulas of the form
\begin{equation*}
\possible\bigl(Q(\bar x)\land\sigma_k\bigr),
\qquad
\possible\bigl(Q(\bar x)\land\sigma_{\leq k}\bigr),
\qquad\text{and}\qquad
\possible\bigl(Q(\bar x)\land\sigma_{\geq k}\bigr),
\end{equation*}
where $Q(\bar x)$ is a partition formula. The ``at most'' case reduces to the exact-cardinality case, since $\sigma_{\leq k}$ is the finite disjunction $\bigvee_{j\leq k}\sigma_j$. We handle the exact-cardinality and ``at least'' cases directly. Notice also that $\sigma_{\geq r}$ is itself a finite Boolean combination of exact-cardinality sentences, since $\sigma_{\geq r}$ is equivalent to $\neg\bigvee_{j<r}\sigma_j$.

\medskip

We first dispose of the degenerate exact-cardinality cases. If $k<|Q|$, then $Q(\bar x)\land\sigma_k$ is inconsistent. In particular, when $k=0$ this covers every nonempty tuple of variables. The remaining zero-cardinality case is the unique partition of the empty tuple, in which case $Q(\bar x)\land\sigma_0$ is simply $\sigma_0$; and in all the categories of sets under consideration, $\possible\sigma_0$ is equivalent to $\sigma_0$, since the empty set is accessible precisely from the empty set. Thus, in the exact-cardinality case, we may assume below that $k>0$ and $k\geq |Q|$.

\medskip

In the categories $\Sets$ and $\FinSets$, an arbitrary function can realize any coarsening of the current equality pattern of the tuple, and the target set may be chosen to have any prescribed positive size large enough for that pattern. Therefore, for $k>0$ with $k\geq |Q|$,
\begin{equation*}
\possible\bigl(Q(\bar x)\land\sigma_k\bigr)
\quad\text{is equivalent to}\quad
\bigvee_{P\leq Q}P(\bar x),
\end{equation*}
and similarly
\begin{equation*}
\possible\bigl(Q(\bar x)\land\sigma_{\geq k}\bigr)
\quad\text{is equivalent to}\quad
\bigvee_{P\leq Q}P(\bar x).
\end{equation*}
Thus the required elimination follows in these cases from lemma~\ref{Lemma.QF-QE-ME} and Boolean closure.

\medskip

The case of surjections is more subtle, since a surjective image cannot have larger cardinality than its source. Recall that $\possible Q(\bar x)$ is equivalent in $\Sets[$\onto$]$ or $\FinSets[$\onto$]$ to a finite disjunction of its refinements $P(\bar x)$. For exact cardinality, with $k>0$ and $k\geq |Q|$, we have
\begin{equation*}
\possible\bigl(Q(\bar x)\land\sigma_k\bigr)
\quad\text{equivalent to}\quad
\bigvee_{P\leq Q}
\Bigl(P(\bar x)\land\sigma_{\geq k+|P|-|Q|}\Bigr).
\end{equation*}
Indeed, if the current tuple realizes $P$, then its $|P|$ distinct parameter-values must map onto the $|Q|$ parameter-values in the target pattern $Q$, and the remaining $k-|Q|$ target elements require at least that many further preimages. Conversely, whenever the displayed lower bound on the source size holds, we may choose a surjection onto a $k$-element target realizing $Q$. Likewise, for the ``at least'' case,
\begin{equation*}
\possible\bigl(Q(\bar x)\land\sigma_{\geq k}\bigr)
\quad\text{is equivalent to}\quad
\bigvee_{P\leq Q}
\Bigl(P(\bar x)\land\sigma_{\geq |P|+\max(0,k-|Q|)}\Bigr).
\end{equation*}
The right-hand sides are finite Boolean combinations of partition formulas and exact-cardinality sentences. Hence the result follows for $\Sets[$\onto$]$ and $\FinSets[$\onto$]$.

\medskip

For inclusions, the equality pattern of the tuple is preserved, while the ambient set can only grow. Hence
\begin{equation*}
\possible\bigl(Q(\bar x)\land\sigma_{\geq k}\bigr)
\quad\text{is equivalent to}\quad
Q(\bar x),
\end{equation*}
and
\begin{equation*}
\possible\bigl(Q(\bar x)\land\sigma_j\bigr)
\quad\text{is equivalent to}\quad
Q(\bar x)\land\sigma_{\leq j}
\end{equation*}
for exact cardinalities. The ``at most'' case then follows by finite disjunction. This proves the cases of $\Sets[$\incl$]$ and $\FinSets[$\incl$]$.

\medskip

The remaining cases follow by the \nameref{Renaming-lemma} and lemma~\ref{Lemma.QF-QE-ME} combined. The proof of the~\nameref{Modalityeliminationtheorem.Sets} is now completed.

For the corresponding full subcategories of infinite sets, the elimination result simplifies further, because every exact-cardinality sentence $\sigma_n$ is false there. We record the resulting partition form explicitly.

\begin{corollary}
\label{Corollary.InfSets-formulas-partitions}
    Let $\mathcal C$ be any of the categories $\InfSets$, $\InfSets[$\onto$]$, $\InfSets[$\into$]$, $\InfSets[$\incl$]$, $\InfSets[$\bij$]$, or $\InfSets[$\ident$]$. Then every formula $\varphi(\bar x)$ in $\LDiamondofeq$ is equivalent in $\mathcal C$ to a finite disjunction of partition formulas on the free variables of $\bar x$. Equivalently, for every world $W$ in $\mathcal C$, every accessibility mapping $f\colon W\to U$, and every tuple $\bar a\in W^{|\bar x|}$, the truth of $\varphi[f(\bar a)]$ at $U$ depends only on the partition of $\{0,\dots,|\bar x|-1\}$ induced by equality among the entries of $f(\bar a)$.
\end{corollary}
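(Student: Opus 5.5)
Since the infinite subcategories are not literally among the categories covered by the \nameref{Modalityeliminationtheorem.Sets}, I will redo the elimination inside each $\mathcal C$ rather than quote it. The plan is a structural induction on $\varphi(\bar x)$ in $\LDiamondofeq$, proving that $\varphi$ is equivalent in $\mathcal C$ to a finite disjunction of partition formulas on $\bar x$. The empty disjunction is allowed (it is $\bot$). For the empty tuple, the only partition formula is $\top$, so sentences become $\top$ or $\bot$.

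The atomic, Boolean and modal steps are routine. An atomic formula $x_i=x_j$ is the disjunction of all partitions placing $i,j$ in the same block. Since the partition formulas on a fixed tuple are pairwise exclusive and jointly exhaustive in every world, the class of finite disjunctions of them is closed under negation (take the complementary set of partitions), conjunction and disjunction. Because $\possible$ distributes over disjunction, the modal step reduces to computing $\possible Q(\bar x)$ for a single partition $Q$. For injections, inclusions, bijections and identities, $\possible Q\iff Q$, as noted in the paragraph before Lemma~\ref{Lemma.QF-QE-ME}; there is no cardinality side-condition, because in $\mathcal C$ every target is infinite anyway. For $\InfSets$ and $\InfSets[$\onto$]$, I use the already-recorded equivalence $\possible Q(\bar x)\iff\bigvee_{P\le Q}P(\bar x)$, valid in the infinite subcategories. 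The one thing to check is that a map collapsing finitely many parameter values has infinite image and can be chosen surjective onto an infinite set. For $\necessary$, I use duality with negation.

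The quantifier step is the real content, and I expect it to be the main obstacle only in the bookkeeping. Suppose $\psi(\bar x,y)\equiv\bigvee_{R\in S}R(\bar x,y)$. Then $\exists y\,\psi$ is equivalent to $\bigvee_{R\in S}\exists y\,R(\bar x,y)$. For each partition $R$ of $|\bar x|+1$ variables, $\exists y\,R(\bar x,y)$ is equivalent to $R{\restriction}\bar x$, the restricted partition, in every infinite world. If $R$ puts $y$ in a block with some $x_i$, take $y=x_i$. If $y$ is a singleton block, infinitude supplies an element distinct from the finitely many $x_i$. This is exactly where infinitude replaces the sentences $\sigma_n$ of the finite case. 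So the universal quantifier case follows by duality.

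Finally, the ``equivalently'' clause follows. By the renaming and transport along $f$, the truth of $\varphi[f(\bar a)]$ at $U$ is the truth of a disjunction of partition formulas at the tuple $f(\bar a)$, and that depends only on the equality pattern of $f(\bar a)$. Conversely, dependence only on the pattern yields the disjunction of the realized patterns; each pattern is realizable in some infinite world, so the two formulations agree.
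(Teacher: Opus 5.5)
Your proposal is correct and follows essentially the paper's route: a structural induction placing every formula in the Boolean closure of partition formulas, where the quantifier step uses the world's infinitude in place of exact-cardinality conditions, and the modal step uses the computation of $\possible Q$ (which is $Q$ itself for the injective-type categories and $\bigvee_{P\leq Q}P$ for functions and surjections). The differences are minor: you check the quantifier step by hand, via $\exists y\,R(\bar x,y)$ being equivalent to the restriction of $R$ to $\bar x$, where the paper cites Chang--Keisler's elimination for pure equality and discards the now-false $\sigma_n$; and you compute $\possible Q$ directly where the paper invokes lemma~\ref{Lemma.QF-QE-ME}.
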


\begin{proof}
    Let $\Lambda$ be the class of finite Boolean combinations of partition formulas. By~\cite[theorem 1.5.7]{ChangKeisler1990:ModelTheory}, every non-modal formula in $\Lofeq$ is equivalent to a finite Boolean combination of partition formulas together with exact-cardinality sentences $\sigma_n$. Since every object of $\mathcal C$ is infinite, each $\sigma_n$ is false at every world of $\mathcal C$, and hence every non-modal formula is equivalent in $\mathcal C$ to a formula in $\Lambda$.

    We now show by induction on formula complexity that every formula in $\LDiamondofeq$ is equivalent in $\mathcal C$ to a formula in $\Lambda$. The atomic case is immediate, and Boolean combinations preserve membership in $\Lambda$. For the quantifier step, if $\psi(\bar x,y)$ is equivalent in $\mathcal C$ to some $\theta(\bar x,y)\in\Lambda$, then $\exists y\,\psi(\bar x,y)$ is equivalent to the non-modal $\Lofeq$-formula $\exists y\,\theta(\bar x,y)$, which by the first paragraph is again equivalent in $\mathcal C$ to a formula in $\Lambda$. Finally, for the modal step, if $\psi(\bar x)$ is equivalent in $\mathcal C$ to some $\theta(\bar x)\in\Lambda$, then $\possible\psi(\bar x)$ is equivalent to $\possible\theta(\bar x)$. If $\theta(\bar x)$ is inconsistent, then $\possible\theta(\bar x)$ is equivalent to false. Otherwise $\possible\theta(\bar x)$ is a satisfiable quantifier-free formula in $\LDiamondofeq$, and lemma~\ref{Lemma.QF-QE-ME} implies that it is equivalent in $\mathcal C$ to a finite disjunction of partition formulas, hence to a formula in $\Lambda$. This completes the induction.

    Every tuple realizes exactly one partition, so every Boolean combination of partition formulas is equivalent to the disjunction of those partitions on which it is true. This yields the displayed normal form. The final claim follows immediately.
\end{proof}

\begin{corollary}
\label{Corollary.InfSets-sentences-Triv}
    Let $\mathcal C$ be any of the categories $\InfSets$, $\InfSets[$\onto$]$, $\InfSets[$\into$]$, $\InfSets[$\incl$]$, $\InfSets[$\bij$]$, or $\InfSets[$\ident$]$. Then every sentence of $\LDiamondofeq$ is equivalent in $\mathcal C$ to either true or false. Consequently, for every world $W$ in $\mathcal C$ and every first-order language $\mathscr L$ such that $\Lofeq\subseteq\mathscr L\subseteq\LDiamondofeq$, we have
	\begin{equation*}
		\Val_{\mathcal C}(W,\mathscr L)=\theoryf{Triv}.
	\end{equation*}
\end{corollary}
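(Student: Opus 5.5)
The plan is to apply Corollary~\ref{Corollary.InfSets-formulas-partitions} to sentences and then observe that a world whose entire modal theory is decided outright can only validate what \theoryf{Triv} proves. The argument has three steps.

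First, a sentence $\varphi$ of $\LDiamondofeq$ has no free variables. By Corollary~\ref{Corollary.InfSets-formulas-partitions}, $\varphi$ is equivalent in $\mathcal C$ to a finite disjunction of partition formulas on the empty tuple. The empty tuple has exactly one partition, whose formula is the empty conjunction, that is, true. A finite disjunction of such formulas is therefore either the empty disjunction (false) or true. So $\varphi$ is equivalent in $\mathcal C$ to $\top$ or to $\bot$, uniformly at every world of $\mathcal C$. The same conclusion applies to every sentence of any $\mathscr L$ with $\Lofeq\subseteq\mathscr L\subseteq\LDiamondofeq$. I take ``language of substitution instances'' here to mean sentences without parameters, which is the sentential regime of the main table.

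Second, fix a world $W$ and a propositional modal formula $\varphi(p_0,\ldots,p_n)$. Substitute sentences $\psi_i$ and replace each by its constant truth value $\varepsilon_i\in\{\top,\bot\}$. Since the $\psi_i$ have constant truth value throughout $\mathcal C$, an induction on subformulas shows that $\necessary\chi$ and $\possible\chi$ have the same truth value as $\chi$ at every world. The induction uses only that every world accesses itself (via the identity) and, nontrivially, that it accesses some world; the identity supplies this. Hence $W\satisfies\varphi(\bar\psi)$ if and only if $\varphi(\bar\varepsilon)$ holds at the one-point reflexive Kripke frame.

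Third, I compare with \theoryf{Triv}, which is characterized by isolated reflexive nodes. If $\varphi\in\theoryf{Triv}$, then $\varphi(\bar\varepsilon)$ holds at the single reflexive point under every valuation, so every instance holds at $W$, giving $\theoryf{Triv}\subseteq\Val$. Conversely, if $\varphi\notin\theoryf{Triv}$, some valuation refutes $\varphi$ at the single reflexive point. I realize that valuation by choosing $\psi_i$ to be $\exists x\,(x=x)$ or $\neg\exists x\,(x=x)$; both lie in $\Lofeq$ and are constantly true and false respectively, since every world is infinite and hence nonempty. Then $W\not\satisfies\varphi(\bar\psi)$, giving $\Val\subseteq\theoryf{Triv}$. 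This inclusion also follows from part (1) of the \nameref{Upperboundstheorem}.

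I expect no serious obstacle. The one point needing care is the second step, because $\possible$ ranges over the cone above $W$ rather than all of $\mathcal C$. The argument survives this because the constant truth values hold across all of $\mathcal C$, so in particular on any cone.
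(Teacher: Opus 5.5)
Your proposal is correct and follows essentially the same route as the paper. You apply Corollary~\ref{Corollary.InfSets-formulas-partitions} to sentences, use the fact that the empty tuple has a unique partition to see that every sentence is constantly $\top$ or $\bot$ in $\mathcal C$, and conclude that validity under sentential substitution coincides with validity under truth-value substitutions at a reflexive point, i.e.\ membership in \theoryf{Triv}; you also make explicit the two inclusions the paper leaves implicit, including the witnesses $\exists x\,(x=x)$ and $\neg\exists x\,(x=x)$ for the upper bound.
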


\begin{proof}
    Apply corollary~\ref{Corollary.InfSets-formulas-partitions} to formulas with no free variables. There is only one partition of the empty set, so every sentence in $\LDiamondofeq$ is equivalent in $\mathcal C$ either to that unique partition formula, namely true, or to false. It follows that a propositional modal assertion is valid at $W$ under sentential substitutions exactly when it is valid under all truth-value substitutions, that is, exactly when it belongs to \theoryf{Triv}.
\end{proof}

\section{The modal theory of the category of sets and functions}\label{Section.category-of-sets-with-functions}

We now turn to the modal theory of sets and functions, considering the category $\Sets$ of sets and functions together with its full subcategories $\FinSets$ of finite sets and $\InfSets$ of infinite sets. Note that these categories deviate from the standard definition of a potentialist system. They admit morphisms that merge individuals, which are accessibility mappings that fail to be inflationary in the domains of structures.

\begin{observation}\label{Observation.Sets-and-FinSets-same}
	A finite world in the category of sets and functions exhibits the exact same modal truths as it would when considered in the category of all finite sets and functions. More precisely, if $W$ is a finite world, then for every $\LDiamondofeq$-assertion $\varphi(\bar x)$ (with parameters allowed) and every tuple $\bar a\in W$,
	\begin{equation*}
		W \models_{\Sets} \varphi[\bar a]
		\qquad \text{if and only if} \qquad
		W \models_{\FinSets} \varphi[\bar a].
	\end{equation*}
	Consequently, any world in the category $\FinSets$ admits the same propositional modal validities as it would in the category $\Sets$.
\end{observation}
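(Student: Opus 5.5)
The plan is a structural induction on $\varphi$, proving simultaneously for all finite worlds $W$ and all assignments $\bar a$ that $W\models_{\Sets}\varphi[\bar a]$ iff $W\models_{\FinSets}\varphi[\bar a]$. The point of doing it over all finite worlds at once, rather than only the given $W$, is that the modal step passes to accessible worlds. Atomic formulas, Boolean connectives and quantifiers are immediate, since these clauses refer only to $W$ itself and the two satisfaction relations agree at $W$ by the induction hypothesis. One subtlety is that in $\Sets$ the cone above a finite world also contains infinite worlds, which are absent in $\FinSets$. So the direct induction will not close by itself: the $\possible$ clause in $\Sets$ quantifies over more targets.

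To handle this, I would not argue the modal case by hand. Instead I would route everything through the \nameref{Modalityeliminationtheorem.Sets}, which applies both to $\Sets$ and to $\FinSets$ with functions. The key claim is that the elimination is \emph{uniform}. For each $\varphi(\bar x)$, the proof of the elimination theorem produces the same Boolean combination $\theta(\bar x)$ of partition formulas and exact-cardinality sentences $\sigma_k$ in both categories. I would verify this by rechecking the inductive reduction step by step. The quantifier step uses only non-modal equivalences (Chang--Keisler), valid in all sets. The modal step uses the displayed equivalences
\[
\possible\bigl(Q(\bar x)\land\sigma_k\bigr)\iff\bigvee_{P\leq Q}P(\bar x)
\]
(for $k>0$, $k\geq|Q|$), together with the ``at least'' analogue, $\possible Q\iff\bigvee_{P\le Q}P$, and the degenerate case $\possible\sigma_0\iff\sigma_0$. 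Each of these was justified by exhibiting a function into a set of any prescribed positive finite size realizing a coarsening. Such a witness is a finite target, so it lives in $\FinSets$ whenever the source is finite. Hence the same right-hand side is correct in both categories.

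With uniformity in hand, the proof finishes as follows. Fix $\varphi$ and let $\theta$ be its common normal form. Then $W\models_{\Sets}\varphi[\bar a]$ iff $W\models\theta[\bar a]$ iff $W\models_{\FinSets}\varphi[\bar a]$. The middle satisfaction is category-independent because $\theta$ is modality-free. Parameters are covered automatically, since they are just values of the free variables in $\bar a$. The consequence about propositional validities follows by definition: every substitution instance has the same truth value at $W$ in both categories.

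The main obstacle is the uniformity claim, namely ensuring that no equivalence in the elimination secretly used an infinite witness. The only place this could happen is the ``at least'' case $\possible(Q\land\sigma_{\geq k})$. There a finite target of size $\max(k,|Q|)$ (or $1$ if the tuple is empty) suffices, so I expect this to go through. As a fallback, I would note directly that any map $f\colon W\to U$ with $U$ infinite can be replaced, for the purposes of formulas in normal form, by one into a sufficiently large finite set inducing the same partition on the relevant parameters.
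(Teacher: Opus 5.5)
Your proposal is correct and follows essentially the same route as the paper. You show that the function case of the \nameref{Modalityeliminationtheorem.Sets} yields the same Boolean combination of partition formulas and exact-cardinality sentences in both $\Sets$ and $\FinSets$, since every modal witness can be taken to be a finite target and $\possible\sigma_0\leftrightarrow\sigma_0$ is handled identically, and then you compare the truth of this modality-free normal form at the finite world $W$; your explicit check of the ``at least'' case is exactly what the paper summarizes as the elimination being ``identical in $\Sets$ and in $\FinSets$.''
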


\begin{proof}
	Let $W$ be finite and let $\varphi(\bar x)$ be an assertion in $\LDiamondofeq$, with parameters allowed. Inspecting the function case of the~\nameref{Modalityeliminationtheorem.Sets}, the elimination of modal operators is identical in $\Sets$ and in $\FinSets$: exact positive targets and ``at least'' targets reduce to the same finite disjunctions of partition formulas, ``at most'' targets are finite disjunctions of exact targets, and the empty target is handled in both categories by the equivalence $\possible\sigma_0\leftrightarrow \sigma_0$. Thus $\varphi(\bar x)$ is equivalent, in both $\Sets$ and $\FinSets$, to the same finite Boolean combination of partition formulas and exact-cardinality sentences.

	At a finite world $W$, such partition formulas and exact-cardinality sentences have the same truth values whether $W$ is regarded as an object of $\Sets$ or of $\FinSets$. Consequently, the same is true of their finite Boolean combinations, and hence
	\begin{equation*}
		W \models_{\Sets} \varphi[\bar a]
		\qquad \text{if and only if} \qquad
		W \models_{\FinSets} \varphi[\bar a]
	\end{equation*}
	for every tuple $\bar a\in W$. The final claim about propositional modal validities follows immediately.
\end{proof}

\begin{theorem}\label{Theorem.Sets-non-empty-worlds-sentences-S5}
The propositional modal validities of a non-empty world in the category $\Sets$ of sets and functions with respect to sentential substitution instances, constitute precisely the propositional modal theory \theoryf{S5}. More precisely, for every non-empty $W$ in $\Sets$, and every first-order language $\mathscr{L}$, modal or not, such that $\Lofeq \subseteq \mathscr{L} \subseteq \LDiamondofeq$, we have
	\begin{equation*}
		\Val_{\Sets}(W, \mathscr{L}) = \theoryf{S5}.
	\end{equation*}
\end{theorem}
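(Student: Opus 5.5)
Since $\Lofeq\subseteq\mathscr L\subseteq\LDiamondofeq$ and enlarging the substitution language can only shrink the set of validities, it suffices to prove two inclusions: $\theoryf{S5}\subseteq\Val_{\Sets}(W,\LDiamondofeq)$ for sentences (the lower bound), and $\Val_{\Sets}(W,\Lofeq)\subseteq\theoryf{S5}$ for sentences (the upper bound).

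\emph{Lower bound.} I will invoke part (3) of the \nameref{Lowerboundstheorem} with $A=\emptyset$. This requires the cone above $W$ to be $\emptyset$-invertible: for every $f\colon W\to W'$ and every $g\colon W'\to U$, there must be some function $U\to W'$. Because $W$ is non-empty, every world in its cone is non-empty. Indeed, $\Sets$ contains no map from a non-empty set to $\emptyset$. Any constant map $U\to W'$ onto a point of $W'$ is therefore a morphism, and commutativity over $\emptyset$ is vacuous. So $\theoryf{S5}$ is valid at $W$ for all sentential substitution instances from $\LDiamondofeq$.

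\emph{Upper bound.} I will use part (2) of the \nameref{Upperboundstheorem}, so I need arbitrarily large finite independent families of switches or dials expressible as $\Lofeq$-sentences. The natural choice is a dial by cardinality. Fix $n$ and consider the list $d_i=\sigma_{i+1}$ for $i<n-1$, together with $d_{n-1}=\sigma_{\geq n}$, where $\sigma_k$ is the exact-cardinality sentence.

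\begin{itemize}
\item At every world in the cone, exactly one $d_i$ holds, since every cone world is non-empty.
\item From any non-empty set $V$ there is a function onto a set of each prescribed positive size (for instance a constant map into a larger set, or any map into a set of size $m\geq 1$). Hence each $d_i$ is necessarily possible.
\end{itemize}

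This gives an $n$-dial for every $n$. By the standard labeling argument (dials label finite clusters), arbitrarily large dials alone suffice for the $\theoryf{S5}$ upper bound, as recorded in the \nameref{Upperboundstheorem}. Combining this with the lower bound yields equality for every intermediate $\mathscr L$.

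\emph{Main obstacle.} The only delicate point is making sure the dial really is a dial in the required sense throughout the entire cone, not just at $W$ itself. This is exactly where non-emptiness enters: from a non-empty world the empty set is never reached, so the list is exhaustive at every cone world and every value remains accessible. The same fact also supplies the inverses needed for invertibility. I would also check that the statement of part (2) is read as requiring only one family of dials (no separate switches), which is how the standard proof works, since a single $n$-dial labels an $n$-element cluster.
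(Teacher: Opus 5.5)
Your proposal is correct and follows the paper's proof: the lower bound is the same $\emptyset$-invertibility argument via part (3) of the \nameref{Lowerboundstheorem}, and the upper bound is a cardinality dial fed into part (2) of the \nameref{Upperboundstheorem}. The only cosmetic difference is the catch-all dial value: the paper uses ``no elements or at least $N$ elements,'' so its dial works even above the empty world, whereas your $\sigma_{\geq n}$ relies on the cone being non-empty, which is fine in this setting.
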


\begin{proof}
	To prove the upper bound, let $W$ be any world in $\Sets$. By the~\nameref{Upperboundstheorem}, it is sufficient to exhibit arbitrarily long finite dials at $W$. Fix $N\in\omega$. For $n>0$, let $d_n$ be the assertion that there are exactly $n$ elements, and let $d_0$ assert that either there are no elements or there are at least $N$ elements. Then $\<d_n\mid n<N>$ is a dial on the cone above $W$: from any world $U$ on that cone, one can reach a world satisfying $d_n$ by mapping $U$ to an $n$-element set when $n>0$, and to an infinite world when $n=0$. Hence $\Val_{\Sets}(W,\mathscr L)\subseteq\theoryf{S5}$ for every language of substitution instances $\mathscr L$.

	For the lower bound, suppose now that $W$ is non-empty. By the~\nameref{Lowerboundstheorem}, it suffices to show that the cone above $W$ is $\emptyset$-invertible. But in the category of sets and functions, every pair of non-empty worlds can access each other, so every degenerate span in the cone above $W$ can be inverted over the empty parameter set. Thus \theoryf{S5} is valid for sentential substitution instances at every non-empty world.

	Combining the lower and upper bounds yields $\Val_{\Sets}(W,\mathscr L)=\theoryf{S5}$ for every non-empty world $W$ and every language $\mathscr L$ with $\Lofeq\subseteq\mathscr L\subseteq\LDiamondofeq$.
\end{proof}

A \emph{lollipop frame} is a propositional Kripke frame composed of a single initial node, which is not accessible from any other node, accessing a cluster of mutually accessible nodes, with no additional nodes, as illustrated in the picture below. We denote the propositional modal theory of all lollipop frames, evaluated at the initial node, by $\theoryf{Lollipop}$. We shall use below that this theory is already characterized by finite lollipop frames; the elementary finite-reduction argument is included in the proof of theorem~\ref{Theorem.Sets-empty-worlds-sentences-Lollipop}.

Some authors use the term \emph{lollipop frame} for a closely related kind of frame (for example, a chain of singleton worlds capped by a top cluster). To avoid confusion, throughout this paper we reserve \emph{lollipop frame} for the one-step frame described above (an initial node accessing a nonempty cluster). The TikZ code for the picture below is adapted from original code by Joel David Hamkins and modified by the author.
\begin{figure}[H]
	\centering
	\begin{tikzpicture}[scale=.36]
		\newcommand{\pentacluster}{
			\draw (18+72*0:1) node[circle,thick,draw,scale=3,fill=LilacMist] (0) {};
			\draw (18+72*1:1) node[circle,thick,draw,scale=3,fill=MintWhisper] (1) {};
			\draw (18+72*2:1) node[circle,thick,draw,scale=3,fill=LavenderFrost] (2) {};
			\draw (18+72*3:1) node[circle,thick,draw,scale=3,fill=VanillaHaze] (3) {};
			\draw (18+72*4:1) node[circle,thick,draw,scale=3,fill=IvoryBreeze] (4) {};
			\foreach \r/\s in {0/1,1/2,2/3,3/4,4/0} {\draw[<->,>=stealth] (\r) edge[bend right=20] (\s);}
			\foreach \r/\s in {0/2,2/4,4/1,1/3,3/0} {\draw[<->,>=stealth] (\r) edge[bend right=10] (\s);}
		}
		\begin{scope}
			\draw node[circle, thick, draw,scale=3,fill=red,fill opacity=.2] (single) {};
		\end{scope}
		\begin{scope}[shift={(0,9)}]
			\draw node[circle,draw,scale=12,fill=JDHYellow] (cluster) {};
			\begin{scope}[scale=3]
				\pentacluster
			\end{scope}
		\end{scope}
		\draw[->,>=stealth] (single) edge (cluster);
	\end{tikzpicture}
    \refstepcounter{figure}
\label{Figure.lollipop}
\end{figure}

\begin{theorem}\label{Theorem.Sets-empty-worlds-sentences-Lollipop}
    The propositional modal validities of the empty world in the category $\Sets$ of sets and functions with respect to sentential substitution instances, constitute precisely the propositional modal theory \theoryf{Lollipop}. More precisely, for every first-order language $\mathscr{L}$, modal or not, such that $\Lofeq \subseteq \mathscr{L} \subseteq \LDiamondofeq$, we have
	\begin{equation*}
		\Val_{\Sets}(\emptyset, \mathscr{L}) = \theoryf{Lollipop}.
	\end{equation*}
\end{theorem}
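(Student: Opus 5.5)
We will establish the two inclusions separately, using the~\nameref{Modalityeliminationtheorem.Sets} to understand the structure of the cone above $\emptyset$. In $\Sets$ the empty set maps to every set, and nothing non-empty maps back to $\emptyset$. So the cone above $\emptyset$ consists of $\emptyset$ as an initial node, which accesses everything, together with the class of all non-empty sets, which access one another. Sententially this cone is therefore a lollipop: a single initial world below a cluster. By the elimination theorem, every sentence of $\LDiamondofeq$ is equivalent in $\Sets$ to a finite Boolean combination of the sentences $\sigma_k$. Consequently any sentential substitution instance is evaluated on the ``frame'' whose initial node is $\emptyset$ and whose cluster consists of the cardinality classes $\{1,2,\dots,\infty\}$.

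\emph{Lower bound.} We will show that every $\varphi\in\theoryf{Lollipop}$ is valid at $\emptyset$. Fix a substitution $p\mapsto\psi_p$ by sentences. We define a propositional Kripke model $M$ on a lollipop frame as follows. Take the initial node $w_0$, and a cluster with one node for each world in the cone up to isomorphism. By the~\nameref{Renaming-lemma}, truth of sentences depends only on cardinality, and by the elimination theorem only on whether the cardinality is $k$ for the finitely many relevant $k$ or lies beyond them. Hence finitely many nodes suffice: $1,\dots,K$ and ``$>K$''. Let $p$ hold at a node exactly when $\psi_p$ holds at a representative world. A routine induction on propositional $\varphi$ then shows that $(M,w_0)\models\varphi$ if and only if $\emptyset\models\varphi(\bar\psi)$. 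The modal step uses that $\emptyset$ accesses all cluster worlds and itself, and that each non-empty world accesses exactly the non-empty worlds. This is essentially the~\nameref{LabelingLemma}, applied to the labeling $w_0\mapsto\sigma_0$, $k\mapsto\sigma_k$, $\text{top}\mapsto\sigma_{\geq K+1}$. It follows that Lollipop validities transfer.

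\emph{Upper bound.} Suppose $\varphi\notin\theoryf{Lollipop}$. Then $\varphi$ fails at the initial node of some lollipop frame, and we need the finite-reduction step here. We will show that a lollipop model falsifying $\varphi$ can be replaced by one with a finite cluster, by filtration through the subformulas of $\varphi$: collapse cluster nodes agreeing on all subformulas. Because every cluster node sees the whole cluster, the filtered model is again a lollipop and the truth values are preserved. We then label this finite lollipop with cluster of size $m$ at $\emptyset$ using the sentences $\sigma_0$ for the root, $\sigma_1,\dots,\sigma_{m-1}$, and $\sigma_{\geq m}$ for the cluster nodes. This satisfies the labeling conditions: each world in the cone satisfies exactly one label, the root accesses every node, and non-empty worlds reach every cluster label but never $\sigma_0$. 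The~\nameref{LabelingLemma} then produces sentential substitution instances falsifying $\varphi$ at $\emptyset$. These sentences lie in $\Lofeq\subseteq\mathscr L$, so the upper bound holds for every $\mathscr L$; the lower bound holds for the largest $\mathscr L=\LDiamondofeq$, hence for all.

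The main obstacle is the finite-reduction step, since the statement treats Lollipop as the theory of all lollipop frames, possibly infinite. The filtration argument above should handle it cleanly. The other point needing care is verifying the exclusivity and accessibility clauses of the labeling at $\emptyset$ itself, since $\emptyset$ accesses only itself among empty worlds.
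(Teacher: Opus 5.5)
Your proposal is correct and follows essentially the same route as the paper. The upper bound matches the paper's argument in substance: you cut a falsifying lollipop model down to a finite cluster by keeping one node per subformula-type, label the root by $\sigma_0$ and the cluster nodes by $\sigma_1,\dots,\sigma_{m-1},\sigma_{\geq m}$, and invoke the Labeling Lemma. The only difference is in the lower bound. The paper passes to the category of finite sets and uses a single lollipop frame with an $\omega$-cluster, together with a translation of the Boolean--modal closure of the $\sigma_m$. You instead build, for each substitution instance, a finite lollipop model whose ``$>K$'' node absorbs the large finite and infinite worlds, which lets you work directly in $\Sets$.
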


\begin{proof}
    We first show that the propositional modal theory \theoryf{Lollipop} is valid at the empty world $\emptyset$. By observation~\ref{Observation.Sets-and-FinSets-same}, it is sufficient to demonstrate that the propositional modal validities of the empty world in the category $\FinSets$ of finite sets and functions, with respect to sentential substitution instances, contain the propositional modal theory \theoryf{Lollipop}.

    Let $F$ be a lollipop frame with a cluster of size $\omega$. For each assertion $\sigma_m$ in the language of equality $\Lofeq$ expressing that there are precisely $m$ elements, we associate a propositional variable $p_m$. Let $w_0$ be the initial node of $F$ and the other nodes, $w_1,\ldots$, be the nodes in the cluster. Let $\bar p$ be an enumeration of the $p_m$s. Define a function $I \colon \FinSets \to F$ such that $W \mapsto w_m$ if and only if $W \models \sigma_m$. This mapping is well-defined and surjective because any world in $\FinSets$ satisfies precisely one of all $\sigma_m$s, and for each $m$, there exists a world in $\FinSets$ that satisfies $\sigma_m$. Using this, we can construct a propositional Kripke model $M$ with frame $F$ such that
	\begin{equation*}
		(M,w) \models p_m \qquad \text{if and only if} \qquad W \models \sigma_m,
	\end{equation*}
	where $w$ is the world corresponding to $W$ via the mapping $I$.

	Let $\Lambda$ be the class of all assertions obtained by closing the assertions of the form $\sigma_m$ under Boolean combinations and modal operators. By induction on the complexity of formulas, we extend the correspondence between the assertions $\sigma_m$ and the propositional variables $p_m$ to a translation between the assertions $\psi \in \Lambda$ and the propositional modal assertions $\psi^*$, as follows.
	\begin{eqnarray*}
		\sigma_m^* &=& p_{m} \\
		(\neg\psi)^*&=& \neg\psi^* \\
		(\psi_0\land\psi_1)^* &=& \psi_0^*\land\psi_1^* \\
		(\possible \psi)^* &=& \possible \psi^*
	\end{eqnarray*}

	We claim that for $\psi \in \Lambda$,
	\begin{equation*}
		(M,w) \models \psi^*(\bar p) \qquad \text{if and only if} \qquad W \models \psi,
	\end{equation*}
	where, again, $w$ is the world corresponding to $W$ via the mapping $I$. We proceed by induction on the complexity of $\psi$ simultaneously for all worlds in the cone above $W_0$---all worlds accessible from $W_0$ concurrently take the role of $W_0$. Note that although $\bar p$ is infinite, we shall use only finitely many $p_m$s for each assertion $\psi$. The case of $\sigma_m$s follows directly from the definition of the translation, and Boolean combinations go through easily.

	For the modal operator case, first suppose $W \models \possible \psi$. This means there is a world $U$ that satisfies $\psi$ and a world $w$ in $M$ corresponding to $W$ via $I$. By induction, for $u = I(U)$, we have that $(M,u) \models \psi^*(\bar p)$. By the definition of $I$, we must have that $w \leq_F u$, so $(M,w) \models \possible \psi^*(\bar p)$. Conversely, suppose $(M,w) \models \possible \psi^*(\bar p)$. Then, there is a world $W$ in $\FinSets$ such that $I(W) = w$, and there is a world $u$ in $M$ accessible from $w$ such that $(M,u) \models \psi^*(\bar p)$. But then, there is a world $U$ in $\FinSets$ such that $I(U) = u$, for which, by induction, $U \models \psi$. Finally, by the definition of $I$, the world $W$ satisfies $\possible \psi$.

	Suppose now that $W_0 \not \models \varphi(\psi_0, \ldots, \psi_n)$, where $\varphi$ is a propositional modal assertion, and $\psi_i$s are from the language of substitution instances. Due to the~\nameref{Modalityeliminationtheorem.Sets}, we can assume without loss of generality that each $\psi_i$ is in $\Lambda$. Observe from the definition of the translation that $\bigl( \varphi(\psi_0,\ldots,\psi_n) \bigr)^* = \varphi(\psi_0^*,\ldots,\psi_n^*)$. We have that $(M,w_0) \not \models \varphi(\psi_0^*,\ldots,\psi_n^*)$. However, the propositional modal theory \theoryf{Lollipop} is normal and, therefore, closed under substitutions. As a result, $\varphi$ cannot be valid at $(M,w_0)$. Since $\varphi$ was chosen arbitrarily, any statement that is not valid at $W_0$ cannot be part of the propositional modal theory \theoryf{Lollipop}. So $\theoryf{Lollipop} \of \Val_{\Sets}(\emptyset, \LDiamondofeq)$.

    \medskip

    We now show that the propositional modal validities of the empty world $\emptyset$, with respect to any assertions whatsoever, even sentential assertions in the first-order non-modal language of equality $\Lofeq$, are contained in the propositional modal theory \theoryf{Lollipop}. Let $n$ be an arbitrarily large natural number, and suppose $F$ is a lollipop frame with a cluster of size $n$. Let $w_0$ be the initial node of $F$, and let $c_1,\ldots,c_n$ be the distinct nodes in the terminal cluster. We aim to find a labeling for the frame $F$ so that we can later apply the~\nameref{LabelingLemma}. Assign the initial node $w_0$ the assertion $\Phi_0$, which says that there are no elements. For $1\le i<n$, assign the cluster node $c_i$ the assertion $\Phi_i$, which says that there are exactly $i$ distinct elements, and assign the remaining cluster node $c_n$ the assertion $\Phi_n$, which says that there are at least $n$ elements. It is not difficult to see that this assignment constitutes a labeling of $F$ for the empty world. Indeed, $\emptyset \models \Phi_0$; each world in the cone above $\emptyset$ satisfies exactly one $\Phi_i$; and for any labeled nodes $u$ and $v$ in $F$, we have $u \leq_F v$ if and only if each world in the cone above $\emptyset$ satisfying the label of $u$ accesses a world satisfying the label of $v$.

	We first record the elementary finite reduction for lollipop frames. Suppose that a propositional modal formula $\varphi$ fails at the initial node of some lollipop model. Let $\Sigma=\operatorname{Sub}(\varphi)$, and in the top cluster choose one representative world for each $\Sigma$-truth type realized there. Keeping the initial node together with these representatives yields a finite lollipop submodel. Since the accessibility pattern between the initial node and the top cluster is preserved, and since every retained top-cluster world still sees exactly the retained top cluster, an induction on formulas in $\Sigma$ shows that the truth of every formula in $\Sigma$ is preserved at all retained worlds, in particular at the initial node. Thus $\varphi$ already fails at the initial node of a finite lollipop model. Consequently, if $\theoryf{Lollipop} \not \models \varphi$, there exists a propositional Kripke model $M$ with a finite lollipop frame where $\varphi$ fails at an initial world $w_0$. By the~\nameref{LabelingLemma}, there is an assignment of the propositional variables $p_i$ to assertions $\psi_{p_i}$ in the language of substitutions such that
\begin{equation*}
	(M,w_0) \models \varphi(p_0,\ldots,p_k)
	\qquad\text{if and only if}\qquad
	\emptyset \models \varphi(\psi_{p_0},\ldots,\psi_{p_k}).
\end{equation*}
Since $\varphi$ is false at $w_0$, it follows that $\emptyset \models \neg \varphi(\psi_{p_0},\ldots,\psi_{p_k})$, and therefore $\varphi$ is not valid at the empty world. As $\varphi$ was arbitrary, this shows
\begin{equation*}
	\Val_{\Sets}(\emptyset, \Lofeq) \subseteq \theoryf{Lollipop}.
\end{equation*}
Combining this with the lower bound already proved at the largest sentential language, we obtain for every first-order language $\mathscr L$ with $\Lofeq\subseteq \mathscr L\subseteq \LDiamondofeq$ the chain
\begin{equation*}
	\theoryf{Lollipop}
	\subseteq
	\Val_{\Sets}(\emptyset,\LDiamondofeq)
	\subseteq
	\Val_{\Sets}(\emptyset,\mathscr L)
	\subseteq
	\Val_{\Sets}(\emptyset,\Lofeq)
	\subseteq
	\theoryf{Lollipop}.
\end{equation*}
Hence $\Val_{\Sets}(\emptyset,\mathscr L)=\theoryf{Lollipop}$, as claimed.
\end{proof}

\begin{theorem}\label{Theorem.Sets-infinite-worlds-formulas-S4.2}
    The propositional modal validities of an infinite world in the category $\Sets$ of sets and functions with respect to formulaic substitution instances, with parameters, constitute precisely the propositional modal theory \theoryf{S4.2}. More precisely, for every infinite $W$ in $\Sets$, and for every first-order language $\mathscr{L}$, modal or not, such that $\Lofeq \subseteq \mathscr{L} \subseteq \LDiamondofeq$, we have
	\begin{equation*}
		\Val_{\Sets}(W, \mathscr{L}_{W}) = \theoryf{S4.2}.
	\end{equation*}
\end{theorem}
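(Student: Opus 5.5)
The plan is to prove the two inclusions separately. For the lower bound I will use the \nameref{Lowerboundstheorem}; for the upper bound I will use part~(4) of the \nameref{Upperboundstheorem}. Since enlarging the substitution language only shrinks the validities, it suffices to prove $\theoryf{S4.2}\subseteq\Val_{\Sets}(W,\LDiamondofeq_W)$ and $\Val_{\Sets}(W,\Lofeq_W)\subseteq\theoryf{S4.2}$.

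\textbf{Lower bound.} By part~(1) of the \nameref{Lowerboundstheorem}, it is enough to show that the cone above $W$ is $W$-amalgamable. Fix $f\colon W\to W'$ and a span $U_0\xleftarrow{f_0}W'\xrightarrow{f_1}U_1$. Form the pushout of $f_0$ and $f_1$ in $\Sets$: this is the quotient $U$ of $U_0\sqcup U_1$ by the equivalence relation generated by $f_0(w)\sim f_1(w)$ for $w\in W'$. The resulting square commutes on all of $W'$, hence on $f(W)$.

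The only issue is that $U$ must be an object of the cone. In $\Sets$ every set is an object, so this is automatic, and $U$ is accessible from $W$ via the composite map. If $U$ happens to be empty, then $W'$ and both $U_i$ are empty; this cannot occur, since $W$ is nonempty and maps into $W'$. So amalgamability holds and \theoryf{S4.2} is valid with parameters.

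\textbf{Upper bound.} By part~(4) of the \nameref{Upperboundstheorem}, I need arbitrarily large independent families of switches (or dials) together with independent unpushed buttons, all expressible in $\Lofeq$ with parameters from $W$. Since $W$ is infinite, I can fix distinct parameters $a_0,\dots,a_{N-1}$ and $c_0,\dots,c_{M-1}$.

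\emph{Buttons.} For each $i<N$, let $b_i$ be the assertion $a_{2i}=a_{2i+1}$. Once two points are identified, they remain identified under every further function, so each $b_i$ is pure. Each is unpushed at $W$, and any subset of the buttons can be pushed independently by a quotient map identifying exactly the chosen pairs.

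\emph{Switches.} Parameters can never be separated again once merged, so switches must come from cardinality instead. I would use the dial $d_n$ (for $n<K$) expressing ``the number of elements is congruent to $n$ modulo $K$, or the world is infinite and $n=0$''. More simply, I can use the exact-size dial from Theorem~\ref{Theorem.Sets-non-empty-worlds-sentences-S5}, adjusted so that the sizes stay large enough to keep the current parameter pattern: let $d_n$ say there are exactly $L+n$ elements, for $0<n<K$, and let $d_0$ say there are none of these sizes, where $L$ exceeds the number of parameters.

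\emph{Independence.} From any world in the cone, with any configuration of pushed buttons, I can map onto a world of exactly $L+n$ elements (or onto an infinite world) that realizes the same parameter pattern with no new identifications. This is because a function may send the current distinct parameter values to distinct points of a target of any size at least their number, and send everything else anywhere. Hence the dial is independent of the buttons.

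\textbf{Main obstacle.} The step I expect to need the most care is checking this independence condition precisely in the form demanded by the \nameref{Upperboundstheorem}: that the dial can be turned while preserving the button pattern, from every world in the cone. The key point is that the dial values are pure cardinality statements, while the button states depend only on the equality pattern of the parameters. With that verified, the two inclusions give $\Val_{\Sets}(W,\mathscr L_W)=\theoryf{S4.2}$ for every intermediate $\mathscr L$.
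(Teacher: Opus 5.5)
Your lower bound is correct. You complete the span by the pushout $(U_0\sqcup U_1)/{\sim}$, which commutes on all of $W'$; the paper instead sends both $U_0$ and $U_1$ by constant maps into a singleton. Either gives $W$-amalgamability. Your dial, using absolute sizes $L+n$ with $L$ above the number of parameters, is also a legitimate alternative to the paper's dial, which counts elements outside the parameter tuple.

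The gap is in the buttons. Part (4) of the First Upper Bounds Theorem needs the buttons to be independent \emph{necessarily}. That is, from every world of the cone with a given set of buttons pushed, one must be able to reach a world in which exactly any prescribed larger set is pushed. This is what makes the pushed-set labels a labeling of the Boolean-lattice-times-cluster frame. You checked this only at $W$, and with $b_i$ the bare equation $a_{2i}=a_{2i+1}$ it fails elsewhere in the cone.

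For a concrete failure, map $W$ to $V$ identifying $a_0$ with $a_2$ and $a_1$ with $a_3$, and nothing else. At $V$ both $b_0$ and $b_1$ are unpushed. But any function making $a_0=a_1$ also makes $a_2=a_3$. So $V$ satisfies the label ``no button pushed'' yet cannot access any world where exactly $b_0$ is pushed.

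The missing idea is the paper's cross-wiring disjunct. Put
\[
\rho:=\bigvee\bigl\{a_k=a_l \bigm| k<l<2N,\ \{k,l\}\neq\{2i,2i+1\}\text{ for all } i\bigr\}
\]
and $b_i:=(a_{2i}=a_{2i+1})\lor\rho$. Any stray identification now pushes all buttons at once. Hence at every cone world where some button is still unpushed, the only identifications among the parameters are within the designated pairs. From such a world, identify exactly the additional pairs, then map into a target of the required dial size. This is possible because fewer than $L$ parameter values must be kept distinct, and it realizes any larger pushed set together with any dial value. With this modification the rest of your argument goes through.
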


\begin{proof}
Let $W$ be an infinite world in $\Sets$.

For the lower bound, by the~\nameref{Lowerboundstheorem}, it suffices to show that the cone above $W$ is $W$-amalgamable. So let $f\colon W\to U$ witness that $U$ lies on the cone above $W$, and consider any span
\begin{equation*}
	U_0 \xleftarrow{f_0} U \xrightarrow{f_1} U_1
\end{equation*}
on that cone. Choose a singleton world $\singleton{s}$. The constant maps $g_0\colon U_0\to \singleton{s}$ and $g_1\colon U_1\to \singleton{s}$ complete the span to a square, and since both composites $g_0\compose f_0\compose f$ and $g_1\compose f_1\compose f$ land in a singleton, they agree on every element of $W$. Thus the square commutes over the parameter set $W$, and the cone above $W$ is $W$-amalgamable. Hence $\theoryf{S4.2}\subseteq \Val_{\Sets}(W,\mathscr L_W)$ for every language $\mathscr L$ with $\Lofeq\subseteq\mathscr L\subseteq\LDiamondofeq$.

For the upper bound, by statement~(4) of the~\nameref{Upperboundstheorem}, it suffices to show that $W$ admits arbitrarily large finite families of independent unpushed buttons together with arbitrarily long dials, in a way that is independent of those buttons. Since all the control statements we use are $\Lofeq$-assertions with parameters from $W$, this will yield $\Val_{\Sets}(W,\mathscr L_{W})\subseteq \theoryf{S4.2}$.

\medskip

Fix $N\in\omega$, and choose pairwise distinct parameters
$u_1,v_1,\ldots,u_N,v_N\in W$.
Let $\rho(\bar u,\bar v)$ be the assertion saying that some cross-wiring occurs among these
parameters, namely:
\begin{equation*}
\bigvee_{j\neq k}\Bigl(u_j=u_k \lor u_j=v_k \lor v_j=v_k\Bigr).
\end{equation*}
For each $1\le i\le N$, define $b_i$ to say $(u_i=v_i) \lor \rho$.
At $W$ we have $u_i\neq v_i$ and $\rho$ is false (all parameters are distinct), hence each $b_i$
is unpushed at $W$.

Each $b_i$ is a button. Indeed, from any world above $W$, we can further collapse the images of
$u_i$ and $v_i$ to make $u_i=v_i$ true. And once $u_i=v_i$ or $\rho$ holds at some world, it
remains true at all further accessible worlds because equality is preserved by functions. Thus
$b_i$ becomes necessary once it becomes true.

To see independence, fix any $S\subseteq\{1,\ldots,N\}$. Define a function
$f_S$ that identifies $u_i$ with $v_i$ exactly when $i\in S$, and is the identity
on all other elements (in particular, it does not identify any $u_j$ with any $v_k$ for $j\neq k$
and does not identify distinct $v$'s with each other). Then at the target world, $\rho$ is still
false and $u_i=v_i$ holds if and only if $i\in S$. Consequently, exactly the buttons $b_i$ with
$i\in S$ are pushed there, and the others remain unpushed. Since $N$ was arbitrary, this gives
arbitrarily large finite independent families of unpushed buttons at $W$.

\medskip

Fix $M\in\omega$ and let $\bar a$ be the tuple listing all the parameters
$\bar a=(u_1,v_1,\ldots,u_N,v_N)$.
For each $n<M$, let $d_n(\bar x)$ assert that there are exactly $n$ elements distinct from all
entries of $\bar x$, and let $d_M(\bar x)$ assert that there are at least $M$ such elements. Then, at every world in the cone above $W$, exactly one of the assertions $d_0(\bar a),\ldots,
d_M(\bar a)$ holds (depending on the number of elements outside the parameter tuple), and each
dial value is reachable from any world above $W$.

Moreover, we can change the dial value without affecting the buttons $b_i$. Indeed, suppose
$h\colon W\to V$ is any function and we want to realize a specific dial value
$n\le M$ above $V$ while keeping the pattern of equalities among the parameters fixed. Choose a
target world $U=h(\bar a) \cup C$,
where $C$ is disjoint from $h(\bar a)$ and has size $n$ if $n<M$, and has size $M$ if $n=M$
(so that $U$ has at least $M$ elements outside $h(\bar a)$). Define $g\colon V\to U$ to fix every
element of $h(\bar a)$ pointwise and to send $V\setminus h(\bar a)$ to a single chosen element of
$C$ (or, if $n=0$, to a single chosen element of $h(\bar a)$). Then $U$ satisfies the desired dial
value, and because $g$ fixes $h(\bar a)$ pointwise it does not introduce any new equalities among
the parameters; in particular, it does not change the truth values of the buttons $b_i$. This is
exactly the required independence.

\medskip

Since $N$ and $M$ were arbitrary, statement~(4) of the~\nameref{Upperboundstheorem} applies and we
conclude $\Val_{\Sets}(W,\mathscr L_{W})\ \of\ \theoryf{S4.2}$.
Together with $\theoryf{S4.2}\of\Val_{\Sets}(W,\mathscr L_{W})$, we obtain $\Val_{\Sets}(W,\mathscr L_{W})=\theoryf{S4.2}$,
as desired.
\end{proof}

Restricting to the full subcategory $\InfSets$ changes the picture. The upper-bound argument above for $\Sets$ uses a dial obtained from finite quotients; that dial disappears in $\InfSets$, and only the finite partition data carried by the parameters remain. By corollary~\ref{Corollary.InfSets-sentences-Triv}, the sentential validities in $\InfSets$ are trivial, so the interesting statement here is formulaic.

\begin{theorem}\label{Theorem.InfSets-formulas-Grz.2}
    The propositional modal validities of a world in the category $\InfSets$ of infinite sets and functions with respect to formulaic substitution instances, with parameters, constitute precisely the propositional modal theory \theoryf{Grz.2}. More precisely, for every world $W$ in $\InfSets$, and for every first-order language $\mathscr{L}$, modal or not, such that $\Lofeq \subseteq \mathscr{L} \subseteq \LDiamondofeq$, we have
    \begin{equation*}
        \Val_{\InfSets}(W,\mathscr{L}_{W})=\theoryf{Grz.2}.
    \end{equation*}
\end{theorem}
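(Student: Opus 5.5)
Lower bound first. By the \nameref{Lowerboundstheorem}, \theoryf{S4.2} follows if the cone above $W$ in $\InfSets$ is $W$-amalgamable. The singleton trick used for $\Sets$ is no longer available, so we replace it by a countably infinite world $\omega$ (or any fixed infinite set). Given $f\colon W\to U$ and a span $U_0\xleftarrow{f_0}U\xrightarrow{f_1}U_1$, map both $U_0$ and $U_1$ constantly to a single point $0\in\omega$. Both composites are then constant on $W$, so the square commutes over $W$, and $\omega$ is infinite, hence in $\InfSets$. This gives $\theoryf{S4.2}\subseteq\Val_{\InfSets}(W,\mathscr L_W)$.

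Next we add \axiomf{Grz} via the \nameref{Lowerboundstheorem2}. Fix a world $V$ accessible from $W$ and an assertion $\varphi$, with parameters $\bar a$ (images of parameters from $W$), that is contingent at $V$. By corollary~\ref{Corollary.InfSets-formulas-partitions}, the truth of $\varphi[g(\bar a)]$ at any $g\colon V\to U$ depends only on the partition of the index set induced by $g(\bar a)$. Accessible partitions from a pattern $P$ are exactly the coarsenings $Q\geq P$, since we can realize any coarsening by an infinite quotient that only collapses finitely many parameter values. Consider the coarsenings of the current pattern at which $\varphi$ takes the value opposite to its current one; this set is nonempty by contingency. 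Among the finite set of such coarsenings choose a \emph{maximal} one $Q$, that is, one such that every strict coarsening of $Q$ gives $\varphi$ back its current value, or else choose $Q$ so that the relevant truth value is locked. More precisely, we argue on the finite poset of coarsenings. Every assertion determines a subset $S$ of that poset. Passing to a pattern $R$ such that $\varphi$ restricted to the up-set of $R$ is nonconstant, but constant on every strict coarsening of $R$, we obtain a world realizing $R$ where $\varphi$ or $\neg\varphi$ is penultimate. Such an $R$ exists by finiteness, since the top element (the one-block partition) has a singleton up-set. We find it by descending from the top along a chain to the first nonconstant point. The \nameref{Lowerboundstheorem2} then yields $\theoryf{Grz.2}$ as a lower bound. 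The main care needed in this step is checking that the condition in the theorem is met for every accessible world, but the finite-poset argument is uniform.

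For the upper bound we use statement~(2) of the \nameref{Upperboundstheorem2}. It suffices to exhibit arbitrarily large finite independent families of unpushed buttons at $W$. We take exactly the buttons from theorem~\ref{Theorem.Sets-infinite-worlds-formulas-S4.2}: $b_i$ is $(u_i=v_i)\lor\rho$ with distinct parameters $u_1,v_1,\dots,u_N,v_N$. We must first note that $W$ is infinite, so these parameters exist. The realizing maps $f_S$ identify finitely many pairs in an infinite set, so their targets remain infinite and they lie in $\InfSets$. Pushedness persists, because equalities are preserved by functions. These buttons are $\Lofeq$-assertions with parameters, so the conclusion holds for every $\mathscr L$ with $\Lofeq\subseteq\mathscr L\subseteq\LDiamondofeq$. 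Combining the two bounds gives equality. I expect the Grz step to be the only point requiring real thought; the rest transfers directly from the $\Sets$ case.
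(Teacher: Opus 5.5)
Your plan matches the paper's proof. It uses the same constant maps into a fixed infinite set to get \theoryf{S4.2}, the same button family for the upper bound (quotients by finitely many identifications stay infinite), and the same appeal to corollary~\ref{Corollary.InfSets-formulas-partitions} to reduce penultimacy to the finite lattice of coarsenings of the current pattern $P$.

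The one step that fails as written is how you locate the pivot $R$.

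\emph{First suggestion.} You take a maximal coarsening carrying the value opposite to the \emph{current} one. This fails when the top partition $\top$ itself carries that opposite value: then $R=\top$, and nothing can change there.

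\emph{Refined criterion.} Requiring $\varphi$ to be nonconstant on ${\uparrow}R$ but constant on every strict coarsening of $R$ is the right condition. However, descending along a single chain from $\top$ to the first nonconstant point need not produce such an $R$. Take three distinct parameters and let $\varphi$ hold exactly at the pattern $\{0,1\}\{2\}$. This $\varphi$ is contingent at the discrete pattern. Along the chain $\top > \{0,2\}\{1\} > \{0\}\{1\}\{2\}$, the first nonconstant point is the discrete partition. Yet neither $\varphi$ nor $\neg\varphi$ is penultimate there: $\neg\varphi$ becomes false at $\{0,1\}\{2\}$ and then true again at $\top$.

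\emph{Repair.} Take $R$ maximal in the finite set of coarsenings $R\geq P$ for which $\varphi$ is nonconstant on ${\uparrow}R$; this set contains $P$ by contingency.
\begin{itemize}
\item Every strict coarsening of $R$ then has a constant up-set, and each such up-set contains $\top$. So $\varphi$ takes the top value $t$ on all strict coarsenings of $R$, and hence the value $1-t$ at $R$.
\item Therefore the truth-value $1-t$ is penultimate at any world realizing $R$.
\end{itemize}
This is exactly the paper's choice of $q$ as a maximal coarsening with truth-value $1-t$, where $t$ is the value at $\top$. With this fix your argument coincides with the paper's.
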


\begin{proof}
    Let $W$ be a world in $\InfSets$.

    For the upper bound, by statement~(2) of the~\nameref{Upperboundstheorem2}, it suffices to show that $W$ has arbitrarily large finite independent families of unpushed buttons. Use the same construction as in the proof of theorem~\ref{Theorem.Sets-infinite-worlds-formulas-S4.2}: fix $N\in\omega$, choose pairwise distinct parameters
    \begin{equation*}
        u_1,v_1,\dots,u_N,v_N\in W,
    \end{equation*}
    let $\rho(\bar u,\bar v)$ assert that some unwanted cross-identification occurs among these parameters, and define
    \begin{equation*}
        b_i := (u_i=v_i)\lor \rho(\bar u,\bar v)
        \qquad (1\le i\le N).
    \end{equation*}
    Exactly as before, the assertions $b_1,\dots,b_N$ form an independent family of buttons at $W$. The only extra point is that the quotient worlds used there remain objects of $\InfSets$; but this is immediate, since identifying only finitely many points of an infinite set still yields an infinite quotient. Therefore $W$ has arbitrarily large finite independent families of unpushed buttons, and so
    \begin{equation*}
        \Val_{\InfSets}(W,\mathscr{L}_{W})\subseteq \theoryf{Grz.2}.
    \end{equation*}

    For the lower bound, we first obtain $\theoryf{S4.2}$. By the~\nameref{Lowerboundstheorem}, it suffices to show that the cone above $W$ is $W$-amalgamable. So let
    \begin{equation*}
        U_0 \xleftarrow{f_0} U \xrightarrow{f_1} U_1
    \end{equation*}
    be any span in the cone above $W$, and let $f\colon W\to U$ witness that $U$ lies on that cone. Choose any infinite set $I$ and any element $c\in I$. The constant maps
    \begin{equation*}
        g_0\colon U_0\to I,
        \qquad
        g_1\colon U_1\to I
    \end{equation*}
    with value $c$ complete the span to a square, and since both composites
    \begin{equation*}
        g_0\compose f_0\compose f
        \qquad\text{and}\qquad
        g_1\compose f_1\compose f
    \end{equation*}
    are constant, the square commutes over the parameter set $W$. Hence the cone above $W$ is $W$-amalgamable, and so
    \begin{equation*}
        \theoryf{S4.2}\subseteq \Val_{\InfSets}(W,\mathscr{L}_{W}).
    \end{equation*}

    To obtain \theoryf{Grz.2}, we verify the hypothesis of the~\nameref{Lowerboundstheorem2}. Let $U$ be any world in the cone above $W$, witnessed by some accessibility mapping $f\colon W\to U$. Let $\bar w=(w_0,\dots,w_{n-1})$ be a finite tuple from $W$, and suppose that the assertion $\varphi[f(\bar w)]$, with $\varphi(\bar x)$ in $\mathscr{L}$, is contingent at $U$.

    By corollary~\ref{Corollary.InfSets-formulas-partitions}, specialized to $\InfSets$, there is a set $X$ of partitions of $\{0,\dots,n-1\}$ such that, at every world accessible from $U$, the assertion $\varphi[f(\bar w)]$ holds exactly when the tuple induced there realizes a partition belonging to $X$. In particular, the truth of $\varphi[f(\bar w)]$ depends only on the partition of $\{0,\dots,n-1\}$ induced by equality among the images of the tuple.

    Let $p$ be the partition currently induced by $f(\bar w)$ at $U$, and let $\Pi_p$ be the finite lattice of all coarsenings of $p$. For each $q\in\Pi_p$, choose a quotient map
    \begin{equation*}
        h_q\colon U\twoheadrightarrow U_q
    \end{equation*}
    that identifies exactly the finitely many points required to realize $q$ on the tuple $f(\bar w)$ and makes no other identifications. Since only finitely many points are merged, each $U_q$ is still infinite. By the previous paragraph, the truth value of $\varphi[h_q\compose f(\bar w)]$ depends only on $q$, so the set
    \begin{equation*}
        T:=\{q\in \Pi_p \mid U_q\models \varphi[h_q\compose f(\bar w)]\}
    \end{equation*}
    is well defined. Because $p$ is realized already at $U$ and $\varphi[f(\bar w)]$ is contingent there, some coarsening of $p$ has the opposite truth value. Hence $T$ is neither empty nor all of $\Pi_p$. Let $\top$ be the top element of $\Pi_p$, let $t\in\{0,1\}$ be the truth-value of $\varphi$ at $U_{\top}$, and choose a partition $q\in\Pi_p$ that is maximal among those with truth-value $1-t$.

    Suppose first that $1-t=1$. Then $U_q\models\varphi[h_q\compose f(\bar w)]$, while $U_{\top}\models\neg\varphi[h_{\top}\compose f(\bar w)]$. Since $\top$ is a strict coarsening of $q$, the world $U_q$ can access a world where $\varphi$ is false, so $\varphi$ is possibly false at $U_q$. Now let $V$ be any world accessible from $U_q$ where $\varphi$ is false, and let $q'$ be the partition realized there by the image of $\bar w$. Then $q'\neq q$, since the truth of $\varphi$ depends only on the realized partition and $U_q\models\varphi[h_q\compose f(\bar w)]$ while $V\models\neg\varphi$. Thus $q'$ is a strict coarsening of $q$, and by maximality of $q$ it has truth-value $t=0$. Any further world accessible from $V$ realizes a coarsening of $q'$, and therefore again a strict coarsening of $q$; by maximality, $\varphi$ is false there as well. Thus once $\varphi$ becomes false above $U_q$, it remains false forever, so $\varphi$ is penultimate at $U_q$.

    Suppose instead that $1-t=0$. Then $U_q\models\neg\varphi[h_q\compose f(\bar w)]$, while $U_{\top}\models\varphi[h_{\top}\compose f(\bar w)]$. Again $\top$ is a strict coarsening of $q$, so $U_q$ can access a world where $\neg\varphi$ is false, and hence $\neg\varphi$ is possibly false at $U_q$. If $V$ is any world accessible from $U_q$ where $\neg\varphi$ is false, let $q'$ be the partition realized there. Then $q'\neq q$, since the truth of $\varphi$ depends only on the realized partition and $U_q\models\neg\varphi[h_q\compose f(\bar w)]$ while $V\models\varphi$. Thus $q'$ is a strict coarsening of $q$, so by maximality of $q$ it has truth-value $t=1$. Every further world accessible from $V$ again realizes a strict coarsening of $q$, and therefore $\neg\varphi$ remains false there. Thus $\neg\varphi$ is penultimate at $U_q$.

    In either case, the contingency $\varphi[f(\bar w)]$ possibly attains a penultimate truth-value at some further world above $U$. Since $U$ and $\varphi[f(\bar w)]$ were arbitrary, the hypothesis of the~\nameref{Lowerboundstheorem2} is satisfied. Therefore
    \begin{equation*}
        \theoryf{Grz.2}\subseteq \Val_{\InfSets}(W,\mathscr{L}_{W}).
    \end{equation*}

    Combining the upper and lower bounds yields
    \begin{equation*}
        \Val_{\InfSets}(W,\mathscr{L}_{W})=\theoryf{Grz.2},
    \end{equation*}
    as desired.
\end{proof}

\begin{theorem}\label{Theorem.Sets-singleton-world-formulas-S5}
    The propositional modal validities of a singleton world in the category $\Sets$ of sets and functions with respect to formulaic substitution instances, with parameters, constitute precisely the propositional modal theory \theoryf{S5}. More precisely, for every $\singleton{s}$ in $\Sets$, and for every first-order language $\mathscr{L}$, modal or not, such that $\Lofeq \subseteq \mathscr{L} \subseteq \LDiamondofeq$, we have
	\begin{equation*}
		\Val_{\Sets}(\singleton{s}, \mathscr{L}_{\singleton{s}}) = \theoryf{S5}.
	\end{equation*}
\end{theorem}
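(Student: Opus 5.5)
The plan is to squeeze $\Val_{\Sets}(\singleton{s},\mathscr L_{\singleton{s}})$ between two bounds, each already available in the paper.

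\emph{Upper bound.} Enlarging the substitution language only shrinks the validities, so
\begin{equation*}
\Val_{\Sets}(\singleton{s},\mathscr L_{\singleton{s}})\subseteq\Val_{\Sets}(\singleton{s},\mathscr L).
\end{equation*}
The right-hand side equals $\theoryf{S5}$ by theorem~\ref{Theorem.Sets-non-empty-worlds-sentences-S5}. Alternatively, the dials built from the sentences $\sigma_n$ in that proof are parameter-free, so statement~(2) of the~\nameref{Upperboundstheorem} applies directly.

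\emph{Lower bound.} I will use statement~(3) of the~\nameref{Lowerboundstheorem} with $A=\singleton{s}$. It suffices to show that the cone above $\singleton{s}$ is $\singleton{s}$-invertible. Fix $f\colon\singleton{s}\to W$ witnessing that $W$ is on the cone, and let $g\colon W\to U$ be any accessibility mapping. Then $W$ and $U$ are both non-empty, since each receives a map from a non-empty set. Define $h\colon U\to W$ to be the constant map with value $f(s)$. Then
\begin{equation*}
(h\compose g)(f(s))=f(s),
\end{equation*}
so $h\compose g$ restricts to the identity on $f(\singleton{s})$. This is exactly the required pseudo-inverse. Hence $\theoryf{S5}$ is valid at $\singleton{s}$ for all substitution instances in $\LDiamondofeq$ with parameters from $\singleton{s}$, and a fortiori for $\mathscr L_{\singleton{s}}$.

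Combining the two bounds gives equality. The argument is easy; the only point needing care is that the parameter image $f(\singleton{s})$ is a single point. That is what makes a constant retraction available, and it is where the argument would break for larger finite worlds.
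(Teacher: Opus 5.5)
Your proposal is correct and follows the paper's proof: the upper bound comes from monotonicity together with theorem~\ref{Theorem.Sets-non-empty-worlds-sentences-S5}, and the lower bound comes from statement~(3) of the \nameref{Lowerboundstheorem} via invertibility of the cone above $\{s\}$. The paper's retraction $h\colon U\to W$ sends $g(f(s))$ to $f(s)$ and is arbitrary elsewhere, and your constant map with value $f(s)$ is one particular choice of such an $h$.
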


\begin{proof}
    Since $\singleton{s}$ is non-empty, theorem~\ref{Theorem.Sets-non-empty-worlds-sentences-S5} gives $\Val_{\Sets}(\singleton{s},\mathscr L)=\theoryf{S5}$ for every language $\mathscr L$ with $\Lofeq\subseteq \mathscr L\subseteq \LDiamondofeq$. Because $\mathscr L\subseteq \mathscr L_{\singleton{s}}$, monotonicity of validities yields the upper bound
\begin{equation*}
	\Val_{\Sets}(\singleton{s}, \mathscr{L}_{\singleton{s}}) \subseteq \Val_{\Sets}(\singleton{s}, \mathscr{L}) = \theoryf{S5}.
\end{equation*}

    For the reverse inclusion, by the~\nameref{Lowerboundstheorem} it suffices to show that the cone above
    $\singleton{s}$ is invertible (or equivalently, $\{s\}$-invertible). That is, given morphisms $f\colon \singleton{s}\to W$ and $g\colon W\to U$ in the cone above $\singleton{s}$, we must find a morphism $h\colon U\to W$ (also in that cone) such that $(h\compose g)(f(s))=f(s)$.
    Since $W$ is nonempty (it contains $f(s)$), we can define a function $h\colon U\to W$ by stipulating $h\bigl(g(f(s))\bigr)=f(s)$,
    and defining $h$ arbitrarily on the remaining elements of $U$. Then $(h\compose g)(f(s))=h\bigl(g(f(s))\bigr)=f(s)$,
    as required. Hence the cone above $\singleton{s}$ is invertible, and therefore $\theoryf{S5}\subseteq\Val_{\Sets}(\singleton{s}, \mathscr{L}_{\singleton{s}})$.
    Together with the upper bound, we obtain $\Val_{\Sets}(\singleton{s}, \mathscr{L}_{\singleton{s}})=\theoryf{S5}$.
\end{proof}

A \emph{partition prelattice} of a set $P$ is obtained from the partition lattice of $P$ by replacing each element with a cluster of one or more equivalent elements, all related by the partition lattice relation. Equivalently, it is a partial preorder ${\leq}$ (a reflexive and transitive relation) on a set $Q$, such that the quotient of $Q$ by the equivalence relation $p \equiv q \iff p \leq q \leq p$ is the partition lattice of $P$ under the induced quotient relation ${\leq}$.

The propositional modal theory $\theoryf{Prepartition}_n$ is the smallest normal propositional modal theory valid in frames that are partition prelattices over an $n$-element set. It is characterized already by the finite such frames. Indeed, if a formula fails at a node of some partition prelattice over an $n$-element set, then in each cluster one may keep one representative for each truth type over the finitely many subformulas of the formula. Since the quotient partition lattice is finite for fixed $n$, the retained frame is a finite partition prelattice over the same $n$-element set, and the usual induction on subformulas preserves the truth of the given formula at the retained node.

\begin{theorem}\label{Theorem.Sets-world-size-n-formulas-Prepartition}
    The propositional modal validities of a world of finite size $n>0$ in the category $\Sets$ of sets and functions with respect to formulaic substitution instances, with parameters, constitute precisely the propositional modal theory $\theoryf{Prepartition}_n$. More precisely, for every $n>0$ and every $W_0$ in $\Sets$ such that $|W_0| = n$, and for every first-order language $\mathscr{L}$, modal or not, such that $\Lofeq \subseteq \mathscr{L} \subseteq \LDiamondofeq$, we have
	\begin{equation*}
		\Val_{\Sets}(W_0, {\mathscr{L}}_{W_0}) = \theoryf{Prepartition}_n.
	\end{equation*}
\end{theorem}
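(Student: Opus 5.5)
We prove the two inclusions separately. The lower bound uses the elimination theorem to build an explicit propositional model on a partition prelattice that simulates the cone above $W_0$. The upper bound labels arbitrary finite partition prelattices over $n$ and applies the \nameref{LabelingLemma}.

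\emph{Lower bound.} Enumerate $W_0=\{a_0,\dots,a_{n-1}\}$. By the \nameref{Modalityeliminationtheorem.Sets}, every substitution instance $\psi(\bar a)$ with parameters from $W_0$ is equivalent to a finite Boolean combination of partition formulas $P(\bar a)$ and exact-cardinality sentences $\sigma_m$. Each world $U$ on the cone, reached by some $f\colon W_0\to U$, therefore has a type: a pair $(p,m)$, where $p$ is the partition of $\{0,\dots,n-1\}$ induced by $f(\bar a)$ and $m\in\omega\cup\{\infty\}$ is $|U|$ (all infinite sizes are merged, since no $\sigma_m$ separates them). Here $m\ge|p|\ge1$.

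We next read off accessibility between types from the function case of the elimination theorem. From a world of type $(p,m)$ one reaches exactly the types $(q,m')$ with $q$ a coarsening of $p$ and $m'\ge|q|$. So define a frame $F$ whose nodes are these types, ordered by $(p,m)\le(q,m')$ iff $p\le q$ in the coarsening order. Two types with the same $p$ are mutually accessible, so the clusters are the sets $\{p\}\times\{m\}$ and the cluster quotient is the partition lattice on $n$ points, oriented with coarsening upward. This is a partition prelattice over an $n$-element set, with infinite clusters.

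Following the template of the proof of Theorem~\ref{Theorem.Sets-empty-worlds-sentences-Lollipop}, we assign to each atom $P(\bar x)$ and $\sigma_m$ a propositional variable. We then translate, and prove by induction that a world of type $w$ satisfies $\psi$ iff $(M,w)\models\psi^*$. The modal step is exactly the accessibility computation above. Any failure of an instance at $W_0$ thus yields a failure at the node $(\text{discrete},n)$ of a model on a partition prelattice, so $\theoryf{Prepartition}_n\subseteq\Val$. Finite reduction is not needed here, since the theory is defined over all such frames.

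\emph{Upper bound.} Let $F$ be a finite partition prelattice over $n$ in which a formula fails at some node. Since the bottom cluster sees everything, we may assume the failure occurs at a node of the bottom cluster; and since $\theoryf{Prepartition}_n$ is normal and characterized by finite frames, we may assume every cluster has the same size $k$. We label each node $(p,i)$, $i<k$, by the statement
\[
P(\bar a)\land d_i,
\]
where $d_0,\dots,d_{k-1}$ is a size dial that uses only worlds of size $\ge n$:
\[
d_i:=\sigma_{n+i}\ (i<k-1),\qquad d_{k-1}:=\sigma_{\ge n+k-1}\lor\sigma_{<n}.
\]
We then check the labeling conditions.

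\begin{enumerate}
\item $W_0$ satisfies the label of $(\text{discrete},0)$.
\item Each cone world satisfies exactly one label.
\item From any world realizing $p$ we can realize any coarsening $q$ together with any size $\ge n\ge|q|$; we can also return to any $d_i$ while keeping $p$ fixed, by mapping to a set of size $n+i$ that contains the image of the tuple. Since $F$ is a finite partition prelattice, every $q\ge p$ in the coarsening order is a node of $F$, so this gives every accessibility the labeling requires.
\item The tuple's partition can never be refined, so no further accessibilities arise.
\end{enumerate}

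The \nameref{LabelingLemma} then yields a failing instance at $W_0$, so $\Val\subseteq\theoryf{Prepartition}_n$.

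The main obstacle is the lower-bound induction: the type must capture every $\LDiamondofeq$-truth, and the accessibility among types must be computed exactly. This is where the elimination theorem is essential, including its handling of the boundary case $m=|q|$.
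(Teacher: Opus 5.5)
Your proposal is correct and follows essentially the paper's proof. The lower bound uses the elimination theorem to simulate the cone above $W_0$ by a model on a partition prelattice with infinite clusters. The upper bound labels a finite partition prelattice by the partition formula on $\bar a$ together with a size dial.

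The differences from the paper are cosmetic:
\begin{itemize}
\item you work in $\Sets$ with a single class $\infty$ for all infinite sizes, instead of first passing to $\FinSets$;
\item you index clusters by total size rather than by the number of elements outside the image of $\bar a$;
\item you use a uniform cluster size $k$ with a global dial on sizes $\geq n$, whereas the paper uses a separate threshold $r_P$ in each cluster.
\end{itemize}

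The one soft spot is your pair of preliminary reductions in the upper bound. Both are true, but not for the reasons you give. The paper skips the first one silently.
\begin{itemize}
\item ``The bottom cluster sees everything'' does not by itself move a failure at a node $w$ over a partition $p$ down to the root. It works here because there is a surjective p-morphism from $F$ onto the subframe generated by $w$ that sends a bottom node to $w$. On clusters over $q\geq p$ it is the identity, and otherwise it sends the cluster over $q$ into the cluster over $q\vee p$.
\item Uniform cluster size does not follow from normality plus the finite frame property. It follows from duplicating nodes within clusters, which again gives a p-morphic preimage. You can avoid this step altogether by using per-cluster thresholds, as the paper does.
\end{itemize}
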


\begin{proof}
    We first show that $\theoryf{Prepartition}_n$ is valid at any $n$-element world of
	$\Sets$ with $n>0$ for formulaic assertions with parameters, already in the modal language
	$\LDiamondofeq$. By observation~\ref{Observation.Sets-and-FinSets-same}, it is enough to
	work in $\FinSets$.

	Let $W_0$ be a world of size $n>0$ in the category $\FinSets$, and let $F$ be a propositional Kripke frame that is the prepartition prelattice of $n$ with each cluster having a size of $\omega$. Each world is determined by a partition and its cluster index $m < \omega$. For every atomic assertion $a_i = a_j$ in the language of equality $\Lofeq(a_i \mid i<n)$, where $a_i$s are added as constants, we associate a propositional variable $p_{ij}$. Additionally, for every assertion $\sigma_m(\bar a)$ expressing that there are $m$ elements different from the ones in $\bar a$, we associate a propositional variable $q_m$. Let $\bar p$ be an enumeration of the $p_{ij}$s and $\bar q$ be an enumeration of the $q_m$s. We now construct a propositional Kripke model $M$ with a frame of $F$, so that for any world $w \in M$, the following conditions hold.
	\begin{enumerate}
		\item $(M, w) \models p_{ij}$ just in case $i$ and $j$ belong to the same block in the partition represented by $w$;
		\item $(M,w) \models q_m$ just in case the cluster index of the world $w$ is $m$.
	\end{enumerate}
	In this manner, the truth value pattern of $\bar p$ and $\bar q$ uniquely encodes the partition $w$ as well as its cluster index. Let $w_0$ be the initial node of $F$, representing a trivial partition with all blocks of size one and a cluster index of zero. We have that $(M,w_0) \models p_{ij}$ if and only if $i = j$, and $(M,w_0) \models q_{m}$ if and only if $m = 0$.

	World $W_0$ has a model expansion $W_0^{+}$ to the language $\Lofeq(a_i \mid i<n)$, where each constant $a_i$ is interpreted as a distinct individual of $W_0$. Any function $f \colon W_0 \to W$ results in a model expansion $W^f$ of $W$ to the language $\Lofeq(a_i \mid i<n)$, where each constant $a_i$ is interpreted as the image under $f$ of the individual of $W_0$ that would be interpreted as $a_i$ in the expansion $W_0^{+}$. Any such $f$ is a map that identifies some elements in $W_0$ in its image while maintaining distinctness among other elements. In fact, $f$ determines a partition of $n$, and thus a cluster of $F$. More specifically, two elements belong to the same block of the partition if and only if they are identified by $f$. Furthermore, $f$ determines a node in that partition. This node is determined by the number of elements of $W$ other than those in the set of values of the $a_i$s, that is outside $f[W_0]$. As a result, there is a natural surjective correspondence, $W^f \mapsto w_f$, between model expansions of the form $W^f$ for worlds $W$ in the cone above $W_0$ and worlds $w_f$ in $M$, such that:
	\begin{enumerate}
		\item $(M,w_f) \models p_{ij}$ just in case $W^f \models a_i = a_j$, and
		\item $(M,w_f) \models q_{m}$ just in case $W^f \models \sigma_m(\bar a)$.
	\end{enumerate}
	This implies that given $W^f \mapsto w_f$ and $U^g \mapsto u_g$, the partition represented by $u_g$ is a coarsening of the partition represented by $w_f$, and hence $u_g$ is accessible from $w_f$, if and only if $g$ determines a partition of $n$ that is coarser than the one determined by $f$. Consequently, $g$ factors through $f \colon W_0 \to W^f$, that is, $g = h \compose f$ for some $h \colon W^f \to U^g$, and $W^f$ accesses $U^g$ if and only if $w_f$ accesses $u_g$.

	Let $\Lambda$ be the class of atomic assertions $\psi(\bar a)$ and assertions $\sigma_n(\bar a)$ in $\LDiamondofeq(a_i \mid i<n)$, closed under Boolean connectives and modal operators. We proceed by induction on the complexity of formulas to extend the correspondences between the atomic assertions $a_i = a_j$ and the propositional variables $p_{ij}$, as well as the existential assertions $\sigma_m(\bar a)$ and the propositional variables $q_m$, to a translation between the assertions $\psi \in \Lambda$ and the propositional modal assertions $\psi^*$, so that
	\begin{eqnarray*}
		(a_i = a_j)^* &=& p_{ij} \\
		(\neg\psi)^*&=& \neg\psi^* \\
		(\psi_0\land\psi_1)^* &=& \psi_0^*\land\psi_1^* \\
		(\possible \psi)^* &=& \possible \psi^* \\
		(\sigma_m(\bar a))^* &=& q_m
	\end{eqnarray*}
	We claim that, for $\psi \in \Lambda$,
	\begin{equation*}
		(M,w_f) \models \psi^*(\bar p, \bar q) \text{ if and only if } W^f \models\psi(\bar a).
	\end{equation*}

	We proceed by induction on the complexity of $\psi$ simultaneously for all worlds $W$ in the cone above $W_0$ and all functions $g \colon W \to U$---all $W$s concurrently take the role of $W_0$, and all $g$s concurrently take the role of $f$. Note that although $\bar q$ is infinite, we will use only finitely many $q_m$s for each assertion $\psi$. The atomic case and the case of $\sigma_m(\bar a)$s follow directly from the definition of the translation. Boolean combinations also go through easily.

	Consider the modal operator case. Suppose $W^f \models \possible \psi(\bar a)$. Then there exists a world $w_f$ in $M$ that naturally corresponds to $W^f$, and a function $g \colon W^f \to U^{g \compose f}$ such that $U^{g \compose f} \models \psi(\bar a)$. Here, $W^f$ and $U^{g \compose f}$ are the respective model expansions of worlds $W$ and $U$ to the language $\LDiamondofeq(a_i \mid i<n)$. Now, there exists a world $u_{g \compose f}$ in $M$ that naturally corresponds to $U^{g \compose f}$. By induction, we have $(M,u_{g \compose f}) \models \psi^*(\bar p, \bar q)$. From our earlier discussion, since $W^f$ accesses $U^{g \compose f}$, it follows that $w_f$ accesses $u_{g \compose f}$. Thus, $(M,w_f) \models \possible \psi^*(\bar p, \bar q)$ as desired. Conversely, due to the surjectivity of the natural correspondence between model expansions in $\mathrm{Cone}(W_0)$ and worlds in $M$, for any world $w$ in $M$, there exists a world $W$ in the cone above $W_0$ and a function $f \colon W_0 \to W$ such that the model expansion $W^f$ is associated with $w$; we can now call it $w_f$. Suppose $(M,w_f) \models \possible \psi^*(\bar p, \bar q)$. There exists a world $u$ in $M$ such that $u$ is accessible from $w_f$ and $(M,u) \models \psi^*(\bar p, \bar q)$. Once again, due to the surjectivity of the natural correspondence, there exists a corresponding model expansion $U^g$ associated with $u$, and we can refer to $u$ as $u_g$. Since $w_f$ accesses $u_g$, as mentioned earlier, we can deduce that $W^f$ accesses $U^g$. This ultimately leads to the conclusion that $W^f \models \possible \psi(\bar a)$.

	Finally, suppose that $W_0 \not \models \varphi(\psi_0, \ldots, \psi_n)$, where $\varphi$ is a propositional modal assertion, and $\psi_i$s are from the language of substitution instances. Due to the~\nameref{Modalityeliminationtheorem.Sets}, we can assume, without loss of generality, that each $\psi_i$ belongs to $\Lambda$. Observe from the definition of the translation that $\bigl( \varphi(\psi_0,\ldots,\psi_n) \bigr)^* = \varphi(\psi_0^*,\ldots,\psi_n^*)$. It follows that $(M,w_0) \not \models \varphi(\psi_0^*,\ldots,\psi_n^*)$. However, the propositional modal theory $\theoryf{Prepartition}_n$ is normal and, therefore, closed under substitutions. Consequently, $\varphi$ cannot be valid at $(M,w_0)$. Since $\varphi$ was chosen arbitrarily, any statement that is not valid at $W_0$ cannot be in the propositional modal theory $\theoryf{Prepartition}_n$. And so, for any non-empty world $W_0$ in $\Sets$ such that $|W_0|=n$, we have $\theoryf{Prepartition}_n \of \Val_{\Sets}(W_0, {\LDiamondofeq}_{W_0})$.

    \medskip

    We now show that the propositional modal validities of any world of size $n > 0$ are contained in the propositional modal theory $\theoryf{Prepartition}_n$, with respect to formulaic substitutions. By the~\nameref{LabelingLemma}, it suffices to label an arbitrary finite prepartition prelattice $F$ of $n$. For each partition $P$ of $\{0,\ldots,n-1\}$, write the finite cluster over $P$ as
		\begin{equation*}
			C_P=\{(P,0),\ldots,(P,r_P)\},
		\end{equation*}
		where $r_P$ may depend on $P$. For $k<r_P$, let $\Phi_{(P,k)}$ say that $P[\bar a]$ holds and that there are exactly $k$ elements other than the values of $\bar a$; let $\Phi_{(P,r_P)}$ say that $P[\bar a]$ holds and that there are at least $r_P$ such elements. Thus the final node of each cluster is its own local ``at least'' bucket.

		These labels partition the cone above $W_0$: the image of $\bar a$ determines a unique partition $P$, and the number of elements outside that image is either exactly $k<r_P$ or at least $r_P$. Also $W_0$ satisfies the label of the initial node.
		Now fix $w=(P,k)$ and $v=(Q,l)$ in $F$. If $w\le_F v$, then $Q$ is a coarsening of $P$. From any world satisfying $\Phi_w$, choose a target set $U=B\cup C$, where $B$ has one point for each block of $Q$ and $C$ is disjoint from $B$, of size $l$ if $l<r_Q$ and of size $r_Q$ if $l=r_Q$. Map each distinguished element named by $a_i$ to the point of $B$ corresponding to the $Q$-block of $i$, and send every other element arbitrarily into $B$. The target then satisfies $\Phi_v$, and hence $\Phi_w\vdash\possible\Phi_v$ in the cone above $W_0$. Conversely, if a world satisfying $\Phi_w$ can access a world satisfying $\Phi_v$, then the equality pattern of the parameter tuple can only coarsen, so $Q$ is a coarsening of $P$, equivalently $w\le_F v$.
		Therefore $w\mapsto\Phi_w$ is a labeling, and by the Labeling Lemma we conclude that
	\begin{equation*}
		\Val_{\Sets}(W_0,{\Lofeq}_{W_0})\subseteq \theoryf{Prepartition}_n.
	\end{equation*}
	Combining this with the lower bound already established at the largest language of formulaic substitution instances, we obtain for every first-order language $\mathscr L$ with $\Lofeq\subseteq \mathscr L\subseteq \LDiamondofeq$ the chain
	\begin{align*}
		\theoryf{Prepartition}_n
		&\subseteq \Val_{\Sets}(W_0,{\LDiamondofeq}_{W_0})
		\subseteq \Val_{\Sets}(W_0,{\mathscr L}_{W_0})\\
		&\subseteq \Val_{\Sets}(W_0,{\Lofeq}_{W_0})
		\subseteq \theoryf{Prepartition}_n.
	\end{align*}
	Hence $\Val_{\Sets}(W_0,{\mathscr L}_{W_0})=\theoryf{Prepartition}_n$, as claimed.
\end{proof}

\par\smallskip
Although no individual world in $\FinSets$ has formulaic validities exactly $\theoryf{S4.2}$, the category as a whole does: one has $\bigcap_{n<\omega}\theoryf{Prepartition}_n=\theoryf{S4.2}$.

\section{The modal theory of the category of sets and surjective functions}\label{Section.category-of-sets-with-surjections}

We now consider surjective functions on $\Sets[$\onto$]$, together with the full subcategories $\FinSets[$\onto$]$ and $\InfSets[$\onto$]$. Since surjective functions from finite sets cannot map onto infinite sets, any surjective function originating from a finite world must target another finite world. Therefore, the worlds accessible from a finite world via surjections in $\Sets[$\onto$]$ are all finite, making the cones above finite worlds in $\Sets[$\onto$]$ and $\FinSets[$\onto$]$ identical. This leads to the observation below.

\begin{observation}\label{Observation.Surj-and-FinSurj-same}
	A finite world in the category of sets and surjective functions has precisely the same modal truths as that same world considered in the category of finite sets and surjective functions. More precisely, if $W$ is a finite world, then
	\begin{equation*}
		W \models_{\Sets[$\onto$]} \varphi[\bar x] \qquad \text{if and only if} \qquad W \models_{\FinSets[$\onto$]} \varphi[\bar x],
	\end{equation*}
	for any assertion $\varphi$ in the modal language of equality $\LDiamondofeq$, with parameters or not. In particular, any world in the category $\FinSets[$\onto$]$ has the same propositional modal validities as it would have in the category $\Sets[$\onto$]$.
\end{observation}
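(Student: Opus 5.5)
The plan is to show that the two satisfaction relations agree at $W$ by proving something slightly stronger: for every finite world $V$ on the cone above $W$ in $\Sets[$\onto$]$, every $\LDiamondofeq$-formula $\varphi(\bar x)$ and every assignment $\nu$ into $V$, we have $V\models_{\Sets[$\onto$]}\varphi[\nu]$ if and only if $V\models_{\FinSets[$\onto$]}\varphi[\nu]$. We argue by induction on $\varphi$, simultaneously over all such $V$ and $\nu$. The case of $W$ itself, with parameters represented by the assignment, is then an instance of this claim.

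The first step is to identify the two cones. If $g\colon V\to U$ is a surjection and $V$ is finite, then $|U|\le|V|$, so $U$ is finite. Composing, every world accessible from $W$ in $\Sets[$\onto$]$ is finite, and so is every world accessible from such a world. Hence the accessible worlds coincide in the two categories, and so do the accessibility mappings between them, because $\FinSets[$\onto$]$ is a full subcategory. Thus $\mathrm{Cone}(W)$ computed in $\Sets[$\onto$]$ is literally the same category as $\mathrm{Cone}(W)$ computed in $\FinSets[$\onto$]$.

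The induction is then routine. Atomic formulas do not depend on the ambient category. The Boolean and quantifier clauses range only over the elements of $V$, so they pass through the induction hypothesis unchanged. For $\possible\psi$, a witness $g\colon V\to U$ in one category is a surjection onto a finite $U$. It is therefore also a witness in the other category, and $U$ is again a finite cone world to which the induction hypothesis applies with the assignment $g\compose\nu$. The case of $\necessary\psi$ follows by duality.

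The final clause follows because the propositional validities at $W$ are defined from the truth of substitution instances at $W$, and these truths agree in the two categories for every substitution language, with or without parameters. There is no substantive obstacle. The only point needing care is that the semantics evaluates possibility within $\mathrm{Cone}(W_0)$, so we must check that the cones, including the morphisms between their worlds, coincide. Fullness of the subcategory together with the cardinality bound for surjections gives this.
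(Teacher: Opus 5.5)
Your proposal is correct and is essentially the paper's argument. The paper justifies the observation by the single remark that a surjective image of a finite set is finite, so the cones above a finite world in the category of all sets with surjections and in the category of finite sets with surjections coincide. Your explicit induction over the cone, with fullness of the finite subcategory ensuring that the accessibility mappings also agree, spells out the routine verification the paper leaves implicit.
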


\begin{theorem}\label{Theorem.Surj-singleton-or-empty-world-Triv}
    The propositional modal validities of a world of size at most one in the category $\Sets[$\onto$]$ of sets and surjective functions constitute precisely the trivial propositional modal theory \theoryf{Triv}. More precisely, for every world $W$ in $\Sets[$\onto$]$ that is either a singleton world $\singleton{s}$ or the empty world $\emptyset$, and every first-order language $\mathscr{L}$, modal or not, such that $\Lofeq \of \mathscr{L} \of {\LDiamondofeq}_{W}$, we have that
    \begin{equation*}
		\Val_{\Sets[$\onto$]}(W, \mathscr{L}) = \theoryf{Triv}.
	\end{equation*}
\end{theorem}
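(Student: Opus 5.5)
The plan is to show that the cone above such a world $W$ in $\Sets[$\onto$]$ consists, up to isomorphism, of worlds isomorphic to $W$ itself. Then every accessibility mapping is an isomorphism, and the category restricted to the cone is model complete. A surjection from $\emptyset$ must land in $\emptyset$, because the empty set only maps to the empty set. A surjection from a singleton $\singleton{s}$ must land in a nonempty set of size at most one, that is, in a singleton. In either case every morphism in $\mathrm{Cone}(W)$ is a bijection between worlds of the same size at most one. In particular it is an isomorphism in the category, since its inverse is again a surjection.

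For the lower bound I will verify that $\axiomf{Triv}$, i.e.\ $\necessary p\iff p$, holds at every world of the cone for every substitution instance, even with parameters from $W$. By the \nameref{ConeLemma} together with Theorem~\ref{Theorem.S4-always-valid}, this gives $\theoryf{Triv}\subseteq\Val_{\Sets[$\onto$]}(W,{\LDiamondofeq}_W)$. Let $V$ be a cone world with assignment $\nu$ and let $g\colon V\to U$ be a morphism. Since $g$ is an isomorphism in the category, the \nameref{Renaming-lemma} gives $V\satisfies\varphi[\nu]$ iff $U\satisfies\varphi[g\compose\nu]$. Hence $\possible\varphi[\nu]\iff\varphi[\nu]$ at $V$. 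Alternatively, I can check that the cone is $W$-invertible: each $g$ is inverted exactly by $g^{-1}$. Then part~(3) of the \nameref{Lowerboundstheorem} yields $\theoryf{S5}$. Separately, every contingent assertion would have to change truth value along an isomorphism, which is impossible, so no assertion is contingent. This vacuously satisfies the \nameref{Lowerboundstheorem2} and gives $\theoryf{S5}+\theoryf{Grz}=\theoryf{Triv}$. I prefer the direct Renaming-lemma argument since it is shorter.

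For the upper bound I invoke part~(1) of the \nameref{Upperboundstheorem}: validities at any world are always contained in $\theoryf{Triv}$. Combining the two bounds with monotonicity in the substitution language, the chain $\theoryf{Triv}\subseteq\Val(W,{\LDiamondofeq}_W)\subseteq\Val(W,\mathscr L)\subseteq\Val(W,\Lofeq)\subseteq\theoryf{Triv}$ closes. The only point needing care is the claim that the cone consists solely of isomorphic copies, especially the edge case that no surjection goes from $\emptyset$ to a nonempty set. No genuine obstacle is expected; the main care is simply in getting that cone description right.
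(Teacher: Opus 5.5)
Your proposal is correct and is essentially the paper's argument: both show that the cone above $\emptyset$ or a singleton contains only bijections between worlds of the same size at most one, use the \nameref{Renaming-lemma} to conclude that modal truth trivializes there, and take the upper bound from the fact that validities are always contained in \theoryf{Triv}. The only difference is that the paper phrases the lower bound as model completeness of the cone and invokes Theorem~\ref{Theorem.world-complete-iff-modalities-trivialize}, whereas you apply the \nameref{Renaming-lemma} directly to $\LDiamond$-formulas and close with the \nameref{ConeLemma}, which is slightly more direct.
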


\begin{proof}
	The propositional modal validities of a world in a Kripke category are always contained in the trivial propositional modal theory \theoryf{Triv}. From theorem~\ref{Theorem.world-complete-iff-modalities-trivialize}, it is sufficient to show that the cones above the singleton worlds $\singleton{s}$ and the empty world $\emptyset$ are model complete. In other words, we aim to demonstrate that the surjection $f \colon W \to U$ in the cones are $\Lofeq$-truth preserving.

	The cone above the empty world satisfies the requirements, because the only surjection in it is the empty function $\emptyset \colon \emptyset \to \emptyset$, which is of course $\Lofeq$-truth preserving. Similarly, the cone above any singleton world $\singleton{s}$ meets the desired conditions, as the only surjections in it are bijections between the singletons $\pi \colon \singleton{u} \to \singleton{v}$, which are $\Lofeq$-truth preserving by the \nameref{Renaming-lemma}.
\end{proof}

\begin{lemma}\label{Lemma.Surj-worlds-contain-Grz.3}
	The propositional modal theory \theoryf{Grz.3} is valid at every world in the category
	$\Sets[$\onto$]$ of sets and surjective functions, for all sentential assertions, even in the
	first-order modal language of equality $\LDiamondofeq$. In other words, for any world $W$ in
	$\Sets[$\onto$]$, we have
	\begin{equation*}
		\theoryf{Grz.3} \of \Val_{\Sets[$\onto$]}(W, \LDiamondofeq).
	\end{equation*}
\end{lemma}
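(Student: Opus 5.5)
We first reduce every sentential substitution instance to a cardinality statement. By the \nameref{Modalityeliminationtheorem.Sets}, applied to sentences (no free variables, so the only partition is the trivial one), every sentence of $\LDiamondofeq$ is equivalent in $\Sets[$\onto$]$ to a finite Boolean combination of the exact-cardinality sentences $\sigma_k$. Hence the truth value of any sentence at a world $U$ depends only on $|U|$. Moreover, there is a threshold $N$ such that all cardinalities $\geq N$, including the infinite ones, give the same truth value. Throughout, we fix a world $W$ and work on the cone above it. Cardinalities can only decrease along surjections, and from any nonempty $U$ one can surject onto every nonempty set of size $\leq|U|$. So, as far as sentences are concerned, the cone behaves like the set of cardinalities $\leq |W|$ ordered by reverse $\geq$, with $\emptyset$ accessible only from $\emptyset$ and conversely.

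For \axiomf{.3} we use Lemma~\ref{Lemma.S4.3-valid-iff-no-independent-buttons}, arguing by factorability. Take a sentential span $U_0\xleftarrow{f_0}U\xrightarrow{f_1}U_1$ above $W$. Without loss of generality $|U_0|\geq|U_1|$. If $U_1$ is nonempty, there is a surjection $U_0\to U_1$. If $U_1=\emptyset$, then $U=\emptyset$ and so $U_0=\emptyset$. Either way the span is $\emptyset$-factorable, and the \nameref{Lowerboundstheorem}(2) gives \theoryf{S4.3} for sentential instances. Since the commutativity requirement over the empty parameter set is vacuous, this step is essentially immediate.

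For \axiomf{Grz} we invoke the \nameref{Lowerboundstheorem2} with $\Lambda=\theoryf{S4.3}$. Let $U$ be on the cone and let $\varphi$ be a sentence contingent at $U$. Then $U\neq\emptyset$, and the set $S\subseteq\{1,\dots,|U|\}$ of reachable cardinalities where $\varphi$ is true is neither empty nor everything. The main point is to find a penultimate world, and we use that the reachable sizes are well-ordered downward with least element $1$. Let $t$ be the truth value of $\varphi$ at size $1$, which is reachable from $U$. Let $m$ be the least reachable cardinality at which $\varphi$ has value $1-t$. Such an $m$ exists because $\varphi$ is contingent, and we choose it among the finite sizes $<N$ together with the single class $\geq N$, so that a least element exists. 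Pass to a world $V$ of size $m$ (or of size exactly $N$ if the least such class is $\geq N$), using a surjection from $U$.

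At $V$, $\varphi$ has value $1-t$, and it is possibly of value $t$ via a surjection onto a singleton. Every world accessible from $V$ where $\varphi$ takes value $t$ has size $<m$. All further worlds are smaller still, hence also of value $t$ by minimality of $m$. So $\varphi$ or $\neg\varphi$ is penultimate at $V$, as required. The only care needed is the infinite/threshold case and the empty world, where there is no contingency. The \nameref{Lowerboundstheorem2} then yields $\theoryf{Grz.3}\subseteq\Val_{\Sets[$\onto$]}(W,\LDiamondofeq)$.
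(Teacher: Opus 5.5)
Your proof is correct, and it takes a partly different route from the paper's.

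\textbf{The \theoryf{Grz} half} is essentially the paper's argument. You take the least size at which the sentence has the value opposite to its value at a singleton, pass to a world of that size, and observe penultimacy there. Your threshold device plays the same role as the paper's remark that an infinite witness of the opposite value can be replaced by a large finite one.

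\textbf{The \theoryf{S4.3} half} is where you differ. You show that the cone is $\emptyset$-factorable: of two nonempty sets the larger surjects onto the smaller, and a span with an empty leg is entirely empty. You then invoke clause~(2) of the \nameref{Lowerboundstheorem}. The paper instead refutes independent weak buttons directly. It surjects the larger world onto a set of the smaller world's finite size, and uses the modality elimination to know that this set agrees with the smaller world on all sentences, with a separate case split when one or both worlds are infinite.

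\textbf{What each route buys.} Your version needs no elimination at this step and treats the empty and singleton worlds uniformly, rather than dispatching them separately via \theoryf{Triv}. The cost is that you compare arbitrary infinite cardinalities, which uses cardinal comparability (choice). The paper sidesteps this because all infinite worlds agree on sentences, so it only needs infinite sets to surject onto finite ones.

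\textbf{Final combination.} You feed $\Lambda=\theoryf{S4.3}$ directly into the \nameref{Lowerboundstheorem2}, whereas the paper obtains \theoryf{Grz} throughout the cone and combines it with \theoryf{S4.3} via the \nameref{ConeLemma}. These are equivalent packagings.
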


\begin{proof}
	By theorem~\ref{Theorem.Surj-singleton-or-empty-world-Triv}, the empty world as well as every
	singleton world validate the trivial propositional modal theory \theoryf{Triv}. Consequently,
	they also validate \theoryf{Grz.3}.

    \medskip

	Let $W$ be an arbitrary non-empty world in the Kripke category $\Sets[$\onto$]$ of sets with
	surjective functions.

	We first verify the validity of $\theoryf{S4.3}$ on the cone above $W$.
	Fix an arbitrary world $W'$ in that cone. By the \nameref{Modalityeliminationtheorem.Sets}, every
	$\LDiamondofeq$-sentence is equivalent in $\Sets[$\onto$]$ to a non-modal $\Lofeq$-sentence. In the
	pure language of equality, such a sentence is determined solely by whether the underlying set has
	exactly $n$ elements for some finite $n$, or is infinite. In particular, all infinite worlds agree
	on the truth of every sentential assertion.

	Suppose towards a contradiction that there are independent weak buttons $b_0$ and $b_1$ at $W'$.
	Then there exist worlds $U_0,U_1$ accessible from $W'$ such that
	\begin{equation*}
		U_0\models \necessary b_0\land \neg b_1
		\qquad\text{and}\qquad
		U_1\models \necessary b_1\land \neg b_0.
	\end{equation*}
	If both $U_0$ and $U_1$ are finite, say of sizes $m_0$ and $m_1$, then either $m_0\ge m_1$ or
	$m_1\ge m_0$ as natural numbers. Without loss of generality, assume $m_0\ge m_1$. Then there is a
	surjection from $U_0$ onto a world $V$ of size $m_1$. Since sentential truth depends only on
	cardinality, $V$ satisfies exactly the same sentential assertions as $U_1$, and in particular
	$V\models \neg b_0$. But $U_0\models \necessary b_0$, contradiction.

	If one of $U_0,U_1$ is infinite and the other finite, then the infinite one surjects onto a world
	of the finite cardinality of the other, and the same contradiction arises because sentential truth
	depends only on whether the world is infinite or on its exact finite size. Finally, if both are
	infinite, then they agree on all sentential assertions, contradicting the fact that one satisfies
	$\neg b_0$ while the other satisfies $\necessary b_0$ (and hence, by \axiomf{T}, satisfies $b_0$).

	Thus there are no independent weak buttons at $W'$. Since $W'$ was arbitrary in the cone above $W$,
	lemma~\ref{Lemma.S4.3-valid-iff-no-independent-buttons} yields that $\theoryf{S4.3}$ is valid at
	every world in the cone above $W$.

	\smallskip

	We now verify the validity of \theoryf{Grz} on the cone above $W$.
	Fix again an arbitrary world $W'$ in the cone above $W$, and let $\varphi$ be any
	$\LDiamondofeq$-sentence that is contingent at $W'$. As above, the truth of $\varphi$ depends only
	on the exact finite size of the world, or on infinitude. Let $t$ be the truth value of $\varphi$
	in a singleton world $1$.

	Since $\varphi$ is contingent at $W'$, there is some accessible world from $W'$ where $\varphi$
	has truth value $\neg t$. Because every non-modal $\Lofeq$-sentence is equivalent to a finite Boolean
	combination of the sentences $\sigma_k$ asserting that there are exactly $k$ elements, there is some
	threshold $N$ such that all worlds of size $>N$ and all infinite worlds agree on the truth of
	$\varphi$. If an accessible world of opposite truth is already finite, we are done. Otherwise, some
	accessible world of opposite truth is infinite. Then $W'$ must itself be infinite, since a surjective
	image of a finite set is finite. But an infinite set surjects onto every nonempty finite cardinal,
	so in particular $W'$ accesses some finite world of size $k>N$, and that world has the same truth
	value $\neg t$ as all infinite worlds. Consequently, there exists an accessible finite world of
	opposite truth, and so there is a least integer $n\ge 2$ such that
	\begin{equation*}
		n\models \neg t \text{ for }\varphi \quad\text{(that is\ } n\models \neg\varphi \text{ if } 1\models\varphi,\ \text{and } n\models \varphi \text{ if } 1\models\neg\varphi\text{)}.
	\end{equation*}
	By minimality of $n$, every nonempty world of size $<n$ agrees with $1$ on the truth of
	$\varphi$, and therefore has truth value $t$ for $\varphi$. Since $n$ is accessible from $W'$, we also
	have $|W'|\ge n$ if $W'$ is finite, while if $W'$ is infinite then it trivially accesses every finite
	world. In either case, $W'$ accesses the world $n$.

	At the world $n$, the truth value that holds there (either $\varphi$ or $\neg\varphi$) is
	penultimate: it is true at $n$, it is possible to reach a singleton world $1$ where it becomes
	false, and once it becomes false it stays false at all further surjective images, because every
	surjective image of a nonempty $k$-element world has size $\le k$, and by minimality of $n$ all
	worlds of size $<n$ already agree with $1$ on $\varphi$.

	Thus, at every world $W'$ in the cone above $W$, every contingent sentence $\varphi$ possibly
	attains a penultimate truth-value. Since $\theoryf{S4.3}$ is already valid throughout the cone above
	$W$, and hence in particular $\theoryf{S4}$ is valid there, the \nameref{Lowerboundstheorem2} yields
	that $\theoryf{Grz}$ is valid at every world in the cone above $W$.

	\smallskip

	Finally, applying the \nameref{ConeLemma} to \theoryf{Grz} and \theoryf{S4.3} shows that the
	propositional modal theory \theoryf{Grz.3} is valid at $W$.
\end{proof}

\begin{theorem}\label{Theorem.Surj-infinite-worlds-Grz.3}
    The propositional modal validities of an infinite world in the category $\Sets[$\onto$]$ of sets and surjective functions with respect to sentential substitution instances, constitute precisely the propositional modal theory \theoryf{Grz.3}. More precisely, for every infinite world $W$ in $\Sets[$\onto$]$, and for every first-order language $\mathscr{L}$, modal or not, such that $\Lofeq \subseteq \mathscr{L} \subseteq \LDiamondofeq$, we have
	\begin{equation*}
		\Val_{\Sets[$\onto$]}(W, \mathscr{L}) = \theoryf{Grz.3}.
	\end{equation*}
\end{theorem}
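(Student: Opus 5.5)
The plan is to sandwich $\Val_{\Sets[$\onto$]}(W,\mathscr L)$ between \theoryf{Grz.3} from both sides. For the lower bound nothing new is needed. Lemma~\ref{Lemma.Surj-worlds-contain-Grz.3} gives $\theoryf{Grz.3}\subseteq\Val_{\Sets[$\onto$]}(W,\LDiamondofeq)$ at every world. Monotonicity of validities in the substitution language then gives $\theoryf{Grz.3}\subseteq\Val_{\Sets[$\onto$]}(W,\mathscr L)$ for every $\mathscr L\subseteq\LDiamondofeq$.

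For the upper bound I will use statement~(1) of the~\nameref{Upperboundstheorem2}. It suffices to exhibit, at the infinite world $W$, arbitrarily long finite uncranked ratchets made of sentences of $\Lofeq$, so that the ratchets lie in the smallest language and hence in every $\mathscr L$. Fix $N$. For $1\le i\le N$, let $r_i$ be the sentence $\neg\sigma_{\geq N+2-i}$, namely ``there are at most $N+1-i$ elements''. Equivalently, $r_i$ says the world has size at most $N+1-i$. Being finite, this size is then automatically finite.

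The key checks, in order, are as follows.
\begin{enumerate}
\item $r_{i+1}$ implies $r_i$ outright, so the implication holds necessarily.
\item Each $r_i$, once true, stays necessarily true. A surjective image of a set with at most $m$ elements has at most $m$ elements. This gives the ratchet its irreversibility.
\item At $W$ no $r_i$ holds, since $W$ is infinite, so the ratchet is uncranked.
\item From any world on the cone at which exactly $r_1,\dots,r_j$ hold, with $j<N$, one can crank exactly one step further. Such a world has size greater than $N+1-(j+1)=N-j$, possibly infinite, so it surjects onto a set of size exactly $N-j$, which is nonempty since $j<N$. That set satisfies $r_1,\dots,r_{j+1}$ but not $r_{j+2}$. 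In the case $j=0$ this also covers infinite worlds, since an infinite set surjects onto every nonempty finite cardinal.
\end{enumerate}
Since $N$ is arbitrary, the~\nameref{Upperboundstheorem2} yields $\Val_{\Sets[$\onto$]}(W,\Lofeq)\subseteq\theoryf{Grz.3}$. Monotonicity then gives the same inclusion for every $\mathscr L\supseteq\Lofeq$.

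Combining the bounds gives the chain
\begin{equation*}
\theoryf{Grz.3}\subseteq\Val_{\Sets[$\onto$]}(W,\LDiamondofeq)\subseteq\Val_{\Sets[$\onto$]}(W,\mathscr L)\subseteq\Val_{\Sets[$\onto$]}(W,\Lofeq)\subseteq\theoryf{Grz.3}.
\end{equation*}
The main point requiring care is matching the paper's informal definition of a ratchet: later entries necessarily imply earlier ones, and the volume can be increased one step at a time. Concretely, I must verify that the one-step cranking above works from \emph{every} world in the cone, not just from $W$. Worlds in the cone may be finite of any size, or infinite, and a finite world of size $m$ can reach all sizes $1\le k\le m$. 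This is exactly what step~(4) uses, so I expect no genuine obstacle there; only the bookkeeping of indices needs attention.
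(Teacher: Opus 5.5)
Your proposal is correct and follows essentially the same route as the paper: the lower bound is the special case of Lemma~\ref{Lemma.Surj-worlds-contain-Grz.3} (plus monotonicity), and the upper bound applies statement~(1) of the~\nameref{Upperboundstheorem2} to the ratchet of sentences ``there are at most $n$ elements,'' which is exactly your $r_i$ up to reindexing. Your step-by-step check that the ratchet can be cranked one step from every world on the cone spells out what the paper asserts in a single line.
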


\begin{proof}
    Suppose $W$ is an infinite world. The fact that this world validates \theoryf{Grz.3} for any kinds of substitution instances is just a special case of lemma~\ref{Lemma.Surj-worlds-contain-Grz.3}. Let us therefore show that the validities are contained in \theoryf{Grz.3}.

	Let $N$ be an arbitrarily large natural number.
    Consider the sequence of statements $r_N, r_{N-1}, \ldots, r_1$ where $r_n$ asserts ``there are at most $n$ elements". This is an uncranked ratchet at $W$. One can always surject a larger set onto every smaller non-empty set. It is therefore clear that any given $r_n$ implies $\necessary r_m$ for $m>n$. For the same reason, from any world where the ratchet has a given volume, we can crank it further up to any higher volume, all the way up to $r_1$. And so, by the~\nameref{Upperboundstheorem2}, the propositional modal assertions that are valid at $W$ with respect to sentences are contained in the propositional modal theory \theoryf{Grz.3}.
\end{proof}

\begin{theorem}\label{Theorem.Surj-world-size-n-sentences-Grz.3Jn}
    The propositional modal validities of every world of size $n>0$ in the category $\Sets[$\onto$]$ of sets and surjective functions with respect to sentential substitution instances constitute precisely the propositional modal theory $\theoryf{Grz.3J}_n$. More precisely, for every world $W$ in $\Sets[$\onto$]$ with $|W|=n>0$, and for every first-order language $\mathscr{L}$, modal or not, such that $\Lofeq \subseteq \mathscr{L} \subseteq \LDiamondofeq$, we have
    \begin{equation*}
        \Val_{\Sets[$\onto$]}(W, \mathscr{L})=\theoryf{Grz.3J}_n.
    \end{equation*}
\end{theorem}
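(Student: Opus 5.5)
We prove the two inclusions separately. The lower bound combines lemma~\ref{Lemma.Surj-worlds-contain-Grz.3} with a check of $\axiomf{J}_n$. The upper bound comes from a labeling of linear orders of length at most $n$. By observation~\ref{Observation.Surj-and-FinSurj-same} we may work in $\FinSets[$\onto$]$.

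The cone above $W$ consists, up to isomorphism, of the worlds of sizes $n,n-1,\ldots,1$. Surjections give access from size $k$ to size $j$ exactly when $j\le k$. By the \nameref{Modalityeliminationtheorem.Sets}, every sentence of $\LDiamondofeq$ is equivalent to a Boolean combination of the sentences $\sigma_k$, so at a finite world its truth depends only on the size. Since every world in the cone has size $\le n$, only $\sigma_1,\ldots,\sigma_n$ matter.

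For the lower bound, consider the finite linear order $n>n-1>\cdots>1$ of length $n$. Send each world of size $k$ to the node $k$, and use the translation argument from theorem~\ref{Theorem.Sets-empty-worlds-sentences-Lollipop}: propositional variables $p_k$ for $\sigma_k$, and induction on $\Lambda$, the closure of the $\sigma_k$ under Boolean connectives and $\possible$. This shows that any sentential instance failing at $W$ yields a propositional model on this linear order whose initial node refutes the corresponding propositional formula. That order is a finite linear order of length at most $n$, and so it validates $\theoryf{Grz.3J}_n$ by the stated frame characterization. Hence every failure at $W$ lies outside $\theoryf{Grz.3J}_n$, which gives $\theoryf{Grz.3J}_n\subseteq\Val_{\Sets[$\onto$]}(W,\LDiamondofeq)$. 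This direct model argument makes the separate appeal to lemma~\ref{Lemma.Surj-worlds-contain-Grz.3} unnecessary, though that lemma can still be cited for the $\theoryf{Grz.3}$ part.

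For the upper bound, take any $\varphi\notin\theoryf{Grz.3J}_n$. By the frame characterization, $\varphi$ fails at the root of some finite linear order of length $m\le n$. Label that order, with nodes $c_1<\cdots<c_m$, as follows: $c_1$ gets the sentence ``there are at least $m$ elements'', and $c_i$ for $i\ge2$ gets $\sigma_{m-i+1}$. These labels partition the cone, since every world there has size between $1$ and $n$ and $n\ge m$. The root label holds at $W$. Accessibility matches the order, because from a world of size $k$ one reaches exactly the sizes $\le k$, and every world satisfying the root label has size $\ge m$, so it can reach every smaller label. The \nameref{LabelingLemma} then transfers the failure to $W$ via $\Lofeq$-sentences, giving $\Val_{\Sets[$\onto$]}(W,\Lofeq)\subseteq\theoryf{Grz.3J}_n$.

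Monotonicity in $\mathscr L$ then gives the same chain as in theorem~\ref{Theorem.Sets-empty-worlds-sentences-Lollipop}, and equality follows. The main point to be careful about is the lower-bound frame: it has to be exactly the length-$n$ chain of nodes without clusters. This relies on the fact that distinct sizes are distinguished by sentences while worlds of the same size are indistinguishable, so no clusters arise and the frame is antisymmetric. That antisymmetry is what accounts for $\axiomf{Grz}$, and the length bound $n$ is what accounts for $\axiomf{J}_n$.
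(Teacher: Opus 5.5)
Your proof is correct and follows the paper's argument. The lower bound uses a model on the chain of cardinalities $n>n-1>\cdots>1$; the paper sets the valuation of $p_j$ directly from the substitution instance $\psi_j$ instead of translating through $\sigma_k\mapsto p_k$, but the induction is the same. The upper bound uses a cardinality labeling of a chain of length $\ell\le n$ together with the \nameref{LabelingLemma}, and the only cosmetic difference is that you put the catch-all ``at least $m$ elements'' at the root, whereas the paper lumps the small sizes $1,\dots,n-\ell+1$ into the top node; both labelings are valid.
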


\begin{proof}
	Let $W$ be a world in $\Sets[$\onto$]$ with $|W|=n>0$, and fix a first-order language $\mathscr{L}$, modal or not, such that $\Lofeq \subseteq \mathscr{L} \subseteq \LDiamondofeq$.
	Let $F_n=\langle w_0<\cdots < w_{n-1}\rangle$ be the finite linear order of length $n$, where the node $w_i$ is intended to represent the cardinality $n-i$.

	\medskip
	\noindent\textit{Lower bound.}
	Let $\varphi(p_0,\ldots,p_k)$ be any propositional modal assertion in $\theoryf{Grz.3J}_n$, and let $\psi_0,\ldots,\psi_k$ be arbitrary sentences of $\mathscr{L}$.
	By the~\nameref{Modalityeliminationtheorem.Sets}, each $\psi_j$ is equivalent in $\Sets[$\onto$]$ to a finite Boolean combination of exact-cardinality sentences $\sigma_m$.
	Since we are dealing with sentences, no atomic equalities remain, and therefore the truth of each $\psi_j$ depends only on the cardinality of the underlying set.
	We may therefore define a propositional Kripke model $M$ on the frame $F_n$ by stipulating that for each $i<n$ and $j\le k$,
	\begin{align*}
		(M,w_i)\models p_j
		\qquad\text{if and only if}\qquad
		&\text{some (equivalently every) world of size }\\
		&n-i\text{ satisfies }\psi_j.
	\end{align*}
	We claim that for every propositional modal assertion $\chi(p_0,\ldots,p_k)$ and every $i<n$,
	\begin{equation*}
		(M,w_i)\models \chi(p_0,\ldots,p_k)
		\qquad\text{if and only if}\qquad
		U\models \chi(\psi_0,\ldots,\psi_k)
	\end{equation*}
	for any world $U$ of size $n-i$ in $\Sets[$\onto$]$.
	The proof is by induction on $\chi$.
	The atomic and Boolean cases are immediate from the definition of $M$.
	For the modal case, suppose first that $(M,w_i)\models\possible\chi$.
	Then there is some $j\ge i$ with $(M,w_j)\models\chi$.
	If $U$ has size $n-i$, then because $n-j\le n-i$ and both are positive, there is a surjection from $U$ onto a world $V$ of size $n-j$.
	By the induction hypothesis, $V\models \chi(\psi_0,\ldots,\psi_k)$, and hence $U\models \possible\chi(\psi_0,\ldots,\psi_k)$.
	Conversely, if $U\models \possible\chi(\psi_0,\ldots,\psi_k)$, then there is a surjection from $U$ onto a world $V$ of some positive size $m\le n-i$ such that $V\models\chi(\psi_0,\ldots,\psi_k)$.
	Writing $m=n-j$, we have $j\ge i$, and by the induction hypothesis $(M,w_j)\models\chi$.
	Thus $(M,w_i)\models\possible\chi$.
	This completes the induction.

	Since $\varphi\in \theoryf{Grz.3J}_n$, it is valid on every finite linear order of length $n$, in particular on $F_n$.
	Hence $(M,w_0)\models\varphi$, and by the claim above it follows that
	\begin{equation*}
		W\models \varphi(\psi_0,\ldots,\psi_k).
	\end{equation*}
	As the substitution instance was arbitrary, we conclude that
	\begin{equation*}
		\theoryf{Grz.3J}_n \subseteq \Val_{\Sets[$\onto$]}(W,\mathscr{L}).
	\end{equation*}

	\medskip
	\noindent\textit{Upper bound.}
	Suppose now that $\varphi\notin \theoryf{Grz.3J}_n$.
	By the finite frame characterization of $\theoryf{Grz.3J}_n$, there is a propositional Kripke model $M$ based on a finite linear order
	\begin{equation*}
		F=\langle v_0<\cdots < v_{\ell-1}\rangle
	\end{equation*}
	of length $\ell\le n$, such that $(M,v_0)\models \neg\varphi$.

	We now label $F$ for the world $W$ using cardinality sentences.
	For $0\le i<\ell-1$, let
	\begin{equation*}
		\Phi_{v_i}:=\sigma_{n-i},
	\end{equation*}
	and let
	\begin{equation*}
		\Phi_{v_{\ell-1}}:=\bigvee_{m=1}^{n-\ell+1}\sigma_m.
	\end{equation*}
	Since $|W|=n$, we have $W\models \Phi_{v_0}$.
	Every world in the cone above $W$ has size between $1$ and $n$, and so satisfies exactly one of the labels $\Phi_{v_i}$.
	Moreover, if a world satisfies $\Phi_{v_i}$, then the possible cardinalities of its surjective images are exactly those corresponding to the labels $\Phi_{v_j}$ with $j\ge i$.
	Thus $v_i\le_F v_j$ if and only if every world satisfying $\Phi_{v_i}$ accesses a world satisfying $\Phi_{v_j}$.
	So $v\mapsto \Phi_v$ is a labeling of $F$ for $W$.

	By the \nameref{LabelingLemma}, there is an assignment of the propositional variables to $\Lofeq$-sentences such that
	\begin{equation*}
		(M,v_0)\models \chi(p_0,\ldots,p_k)
		\qquad\text{if and only if}\qquad
		W\models \chi(\psi_{p_0},\ldots,\psi_{p_k})
	\end{equation*}
	for every propositional modal assertion $\chi$.
	Since $\Lofeq \subseteq \mathscr{L}$, these witnessing sentences also belong to $\mathscr{L}$.
	Applying this to $\chi=\varphi$ and using $(M,v_0)\models\neg\varphi$, we obtain a sentential substitution instance in $\mathscr{L}$ witnessing that $\varphi$ is not valid at $W$.
	Therefore
	\begin{equation*}
		\Val_{\Sets[$\onto$]}(W,\mathscr{L})\subseteq \theoryf{Grz.3J}_n.
	\end{equation*}

	Combining the two inclusions yields
	\begin{equation*}
		\Val_{\Sets[$\onto$]}(W,\mathscr{L})=\theoryf{Grz.3J}_n,
	\end{equation*}
	as claimed.
\end{proof}

\begin{lemma}\label{Lemma.Surj-world-parameters-contain-Grz.2}
	The propositional modal theory \theoryf{Grz.2} is valid at every world in the category $\Sets[$\onto$]$ of sets and surjective functions, for any assertions whatsoever, even formulaic assertions, with parameters, in the modal language of equality $\LDiamondofeq$. In other words, for any world $W$ in $\Sets[$\onto$]$, we have
	\begin{equation*}
		\theoryf{Grz.2} \of \Val_{\Sets[$\onto$]}(W, {\LDiamondofeq}_{W}).
	\end{equation*}
\end{lemma}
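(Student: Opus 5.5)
The plan is to establish $\theoryf{S4.2}$ first, via amalgamability in the \nameref{Lowerboundstheorem}, and then upgrade it to $\theoryf{Grz.2}$ with the \nameref{Lowerboundstheorem2}, following the pattern of theorem~\ref{Theorem.InfSets-formulas-Grz.2}. Worlds of size at most one are already handled: by theorem~\ref{Theorem.Surj-singleton-or-empty-world-Triv} they validate $\theoryf{Triv}$, which contains $\theoryf{Grz.2}$. So we may assume $W$ has at least two elements; since surjections preserve nonemptiness, every world on its cone is nonempty.

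\textbf{Amalgamability.} Let $f\colon W\to U$ and $U_0\xleftarrow{f_0}U\xrightarrow{f_1}U_1$ be surjections on the cone. Take a singleton $\{s\}$ and the constant maps $g_i\colon U_i\to\{s\}$. These are surjections because $U_i$ is nonempty, and the square trivially commutes on all of $W$. Hence the cone above $W$ is $W$-amalgamable, and \theoryf{S4.2} is valid at $W$ for formulaic substitutions with parameters.

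\textbf{Penultimate truth values.} Fix a world $U$ on the cone with witness $f$, a tuple $\bar w$ from $W$, and an $\LDiamondofeq$-formula $\varphi$ such that $\varphi[f(\bar w)]$ is contingent at $U$. By the \nameref{Modalityeliminationtheorem.Sets}, the truth of $\varphi[\bar b]$ at a world $V$ depends only on two data: the partition $P$ induced by $\bar b$, and the cardinality of $V$, which is either a finite size or ``infinite''. Along any surjection, $P$ can only coarsen and the size can only weakly decrease. Define the \emph{state} of a tuple in a world as the pair (partition, size) together with this preorder. The key point is that it is well-founded upward: from any state, the reachable states are obtained by coarsening the partition and shrinking the size.

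For each state $s$ reachable from $U$, the value $\varphi(s)$ is well defined. The terminal state is the top partition together with size $1$, and it is reachable from every state on the cone. Let $t$ be its truth value. Since $\varphi$ is contingent at $U$, some reachable state takes the value $1-t$. Choose such a state $q$ that is maximal, meaning no strictly later reachable state has value $1-t$. Such a $q$ exists because the set of states reachable from a given state, ordered by reachability, is well-founded in the right direction: partitions coarsen finitely often, and sizes decrease through the ordinal-like order consisting of an infinite cardinal followed by finite numbers, with only finitely many finite steps.

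Then argue exactly as in theorem~\ref{Theorem.InfSets-formulas-Grz.2}. At a world realizing $q$, the value $1-t$ holds; it can be made false by moving to the terminal state; and once it is false, we are at a state strictly past $q$, so every later state is also strictly past $q$ and the value stays $t$. Thus $\varphi$ or $\neg\varphi$ is penultimate there.

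\textbf{Main obstacle.} The delicate step is the existence and behaviour of a maximal $q$ when infinite sizes are involved, because infinite cardinals can surject onto smaller infinite cardinals. The way around this is that elimination makes all infinite sizes indistinguishable: the normal form mentions only $\sigma_k$, so every infinite world behaves like a single ``size $\infty$'' state. Moreover, the state order is genuinely a partial order, not merely a preorder: a surjection preserving both the partition and the finite size is a bijection, and any two infinite worlds already collapse into the same state. A state is maximal if nothing strictly later has value $1-t$. To find one, start anywhere with value $1-t$ and repeatedly move strictly later while value $1-t$ is available. Each step strictly coarsens the partition or strictly lowers the size, and after the first step the size is finite, so the process terminates.

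Finally, we must check that the chosen state $q$ is actually realized above $U$ by a surjection. Given a target partition coarsening the current one and a target size at most the current size and at least the number of blocks, choose a surjection that merges exactly the required parameter values and collapses the non-parameter elements down to the desired count. This is possible precisely when the size constraints of the elimination theorem hold, and those constraints are exactly the reachability constraints defining the state order. With these checks the \nameref{Lowerboundstheorem2} applies, yielding $\theoryf{Grz.2}$ at $W$.
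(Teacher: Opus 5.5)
Your proof is correct and follows essentially the paper's proof. You get \theoryf{S4.2} by amalgamating every span into a singleton, and then \theoryf{Grz} from the Second Lower Bounds Theorem via an extremal (partition, size) state whose truth value differs from the one at the singleton; the only real difference is that the paper truncates sizes at $N+1$, so its rank poset is finite and an extremal element exists trivially, whereas you keep exact finite sizes plus a single $\infty$ and argue well-foundedness. Two small slips do no harm: a step can strictly coarsen the partition while the size stays infinite, so ``after the first step the size is finite'' is false (termination still holds), and your stated size bounds do not by themselves guarantee reachability (with both elements of a two-element world named, you cannot reach the one-block partition at size $2$), but since $q$ is chosen among states already reachable from $U$, no extra realization check is needed.
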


\begin{proof}
	If $W=\emptyset$, then the cone above $W$ is trivial and hence $\theoryf{Triv}$ (and therefore $\theoryf{Grz.2}$) is valid at $W$ for any substitution instances. So assume that $W$ is nonempty. In this case every world in the cone above $W$ is nonempty as well, since there is no surjection from a nonempty set onto~$\emptyset$.

	\medskip
	\noindent\textit{(1) Validity of $\theoryf{S4.2}$ on the cone.}
	Let $W'$ be a world in the cone above $W$. Consider any span in $\Sets[$\onto$]$,
	\begin{equation*}
		U_0 \xleftarrow{f_0} W' \xrightarrow{f_1} U_1.
	\end{equation*}
	Since $U_0$ and $U_1$ are nonempty, the unique maps $g_0 \colon U_0 \to \singleton{s}$ and $g_1 \colon U_1 \to \singleton{s}$ are surjections and map everything to a common singleton world, thereby completing the span and forming a commutative square. Consequently, the cone above $W$ is amalgamable. By the~\nameref{Lowerboundstheorem}, the propositional modal theory $\theoryf{S4.2}$ is valid at $W$ with respect to all substitution instances in ${\LDiamondofeq}_{W}$.

	\medskip
	\noindent\textit{(2) Penultimacy on the cone.}
	Let $W'$ be any world in the cone above $W$, let $\bar a=(a_0,\dots,a_{m-1})$ be a tuple from $W'$, and let $\varphi(\bar x)$ be any formula of ${\LDiamondofeq}_{W'}$. Assume that $\varphi[\bar a]$ is contingent at $W'$.

	Work in the expanded language $\Lofeq(\bar c)$ with constants $\bar c=(c_0,\dots,c_{m-1})$. For any surjection $f\colon W'\twoheadrightarrow U$ in the cone above $W'$, let $U^f$ denote the expansion of $U$ interpreting each $c_i$ as $f(a_i)$; thus $U\models \psi[f(\bar a)]$ is equivalent to $U^f\models \psi(\bar c)$ for any $\Lofeq$-formula $\psi$.

	By the~\nameref{Modalityeliminationtheorem.Sets} (applied with parameters), there is a non-modal $\Lofeq(\bar c)$-sentence $\theta(\bar c)$, which is a finite Boolean combination of atomic equalities/inequalities among the $c_i$ and sentences $\sigma_n$ asserting that there are exactly $n$ elements, such that
	\begin{equation*}
		\Sets[$\onto$]\models \forall \bar x\,\bigl(\varphi(\bar x)\leftrightarrow \theta(\bar x)\bigr).
	\end{equation*}
	In particular, for every surjection $f\colon W'\twoheadrightarrow U$ we have
	\begin{equation*}
		W'\models \varphi[\bar a]\quad\text{iff}\quad W'\models \theta[\bar a],
		\qquad\text{and}\qquad
		U\models \varphi[f(\bar a)]\quad\text{iff}\quad U^f\models \theta(\bar c).
	\end{equation*}

		Let
		\begin{equation*}
			N=\max\bigl(\{0\}\cup\{n\mid \sigma_n\text{ occurs in }\theta\}\bigr),
		\end{equation*}
		so that $N$ is defined even if no exact-cardinality sentence occurs in $\theta$. For each surjection $f\colon W'\twoheadrightarrow U$, define the \emph{$\theta$-rank} of the expansion $U^f$ by
	\begin{equation*}
		\mathrm{rk}(U^f)=(s(U),P_f),
	\end{equation*}
	where $s(U)=\min(|U|,N+1)$ and $P_f$ is the partition of $\{0,\dots,m-1\}$ induced by equality among the constants in $U^f$. Partially order such ranks by declaring
	\begin{equation*}
		(s,P)\preceq (s',P')\quad\text{iff}\quad s\le s'\ \text{and}\ P\ \text{is coarser than}\ P'.
	\end{equation*}
	If $g\colon U\twoheadrightarrow V$ is a surjection, then $\mathrm{rk}(V^{g\compose f})\preceq \mathrm{rk}(U^f)$, since size can only decrease and equalities among the images of $\bar a$ can only increase.

	Let $\pi\colon W'\twoheadrightarrow \singleton{s}$ be the unique surjection to a singleton, and consider the bottom expansion $(\singleton{s})^{\pi}$, in which all constants $c_i$ are equal. Let $t$ be the truth value of $\theta(\bar c)$ there. Since $\varphi[\bar a]$ is contingent at $W'$, there exists some surjection $f\colon W'\twoheadrightarrow U$ such that $U^f\models \theta(\bar c)$ has truth value $\neg t$. Choose such an $f$ so that $\mathrm{rk}(U^f)$ is $\preceq$-minimal among all expansions reachable from $W'$ on which $\theta(\bar c)$ has truth value $\neg t$.

	If $t$ is true in $(\singleton{s})^{\pi}$, then $U^f\models \neg\theta(\bar c)$, so $\neg\theta(\bar c)$ is true at $U^f$, and $\theta(\bar c)$ is possible at $U^f$ (witnessed by the surjection $U\twoheadrightarrow \singleton{s}$). Moreover, if $g\colon U\twoheadrightarrow V$ and $V^{g\compose f}\models \theta(\bar c)$, then $\mathrm{rk}(V^{g\compose f})\prec \mathrm{rk}(U^f)$; by minimality of $\mathrm{rk}(U^f)$ there is no world of smaller rank where $\neg\theta$ holds. Hence every further surjective image of $V^{g\compose f}$ also satisfies $\theta(\bar c)$, and so $V^{g\compose f}\models \necessary\theta(\bar c)$. This shows that $\neg\theta(\bar c)$ is penultimate at $U^f$.

	If $t$ is false in $(\singleton{s})^{\pi}$, the same argument with $\theta$ and $\neg\theta$ interchanged shows that $\theta(\bar c)$ is penultimate at $U^f$.

	Therefore every contingency at every world in the cone above $W$ possibly attains a penultimate truth-value. By the~\nameref{Lowerboundstheorem2}, it follows that $\theoryf{Grz}$ is valid at $W$ with respect to ${\LDiamondofeq}_{W}$. Together with the validity of $\theoryf{S4.2}$ established in (1), this yields validity of $\theoryf{Grz.2}$ at $W$.
\end{proof}

\begin{theorem}\label{Theorem.Surj-infinite-worlds-formulas-Grz.2}
    The propositional modal validities of an infinite world in the category $\Sets[$\onto$]$ of sets and surjective functions with respect to formulaic substitution instances, with parameters, constitute precisely the propositional modal theory \theoryf{Grz.2}. More precisely, for every infinite world $W$ in $\Sets[$\onto$]$, and for every first-order language $\mathscr{L}$, modal or not, such that $\Lofeq \subseteq \mathscr{L} \subseteq \LDiamondofeq$, we have
	\begin{equation*}
		\Val_{\Sets[$\onto$]}(W, \mathscr{L}_{W}) = \theoryf{Grz.2}.
	\end{equation*}
\end{theorem}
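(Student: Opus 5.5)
The lower bound is immediate: lemma~\ref{Lemma.Surj-world-parameters-contain-Grz.2} gives $\theoryf{Grz.2}\subseteq\Val_{\Sets[$\onto$]}(W,{\LDiamondofeq}_W)$. By monotonicity of validities in the substitution language, this yields $\theoryf{Grz.2}\subseteq\Val_{\Sets[$\onto$]}(W,\mathscr L_W)$ for every $\mathscr L$ with $\Lofeq\subseteq\mathscr L\subseteq\LDiamondofeq$. It therefore suffices to prove the upper bound $\Val_{\Sets[$\onto$]}(W,{\Lofeq}_W)\subseteq\theoryf{Grz.2}$ at the smallest language. The chain of inclusions then closes exactly as in the earlier theorems.

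For the upper bound I would use statement~(2) of the~\nameref{Upperboundstheorem2}. This requires arbitrarily large finite independent families of unpushed buttons at $W$, expressed by $\Lofeq$-formulas with parameters from $W$. I would reuse the construction from theorem~\ref{Theorem.Sets-infinite-worlds-formulas-S4.2}. Fix $N$ and pairwise distinct $u_1,v_1,\dots,u_N,v_N\in W$. Let $\rho$ assert some cross-identification among them, and set $b_i:=(u_i=v_i)\lor\rho$. Each $b_i$ is false at $W$. Once true, each $b_i$ stays true, because surjections are functions and so preserve equalities. Hence each $b_i$ is pure, and in particular $\necessary b_i$ holds wherever $b_i$ holds.

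The only new point is that every witnessing map must be a surjection, and this is where the infinitude of $W$ enters. Let $V$ be any world on the cone above $W$ via a surjection $f$, and let $S\subseteq\{1,\dots,N\}$. I would take the quotient map $V\twoheadrightarrow V/{\sim}$, where $\sim$ identifies $f(u_i)$ with $f(v_i)$ for $i\in S$ and makes no other identifications. This map is surjective. If $\rho$ was false at $V$, it remains false afterwards, since only the designated pairs are merged. Thus from $W$ itself (taking $f=\id$) we reach a world where exactly the buttons $b_i$ with $i\in S$ are pushed. More generally, from any world where the buttons are not yet all pushed, each unpushed one can still be pushed, so each $b_i$ is genuinely a button. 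Independence follows from the $S$-construction. I would also check that $V/{\sim}$ is a legitimate world; this is automatic in $\Sets[$\onto$]$. The quotient stays infinite, which matters only if one also wants the argument to go through in $\InfSets[$\onto$]$.

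The main obstacle is formal rather than mathematical: the Upper Bounds Theorem is stated for buttons, that is, statements that are \emph{necessarily} possibly necessary. So I must check $\necessary\possible\necessary b_i$ at $W$ in the surjective category. At any cone world $V$, either $b_i$ already holds, in which case it is necessary, or $f(u_i)\neq f(v_i)$, in which case the quotient merging these two points is a surjection making $b_i$ true and hence necessary. With this verified, statement~(2) gives the upper bound, and combining it with the lower bound yields equality.
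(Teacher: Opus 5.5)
Your proposal is correct and follows essentially the paper's proof. The lower bound comes from lemma~\ref{Lemma.Surj-world-parameters-contain-Grz.2}, and the upper bound from statement~(2) of the \nameref{Upperboundstheorem2}, using the same buttons $b_i=(u_i=v_i)\lor\rho$ and the same quotient maps; the paper also cites observation~\ref{Observation.Under-S4.2-weak-buttons-are-buttons}, but it checks button-hood directly just as you do. One small correction: infinitude of $W$ is not what makes the quotient maps surjective, since quotient maps always are; it is what lets you choose $2N$ distinct parameters for every $N$, which is impossible at a finite world, where the answer is $\theoryf{Partition}_n$ instead.
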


\begin{proof}
    Suppose $W$ is an infinite world. The fact that this world validates \theoryf{Grz.2} is just a special case of lemma~\ref{Lemma.Surj-world-parameters-contain-Grz.2}. Let us therefore show that the validities are contained in \theoryf{Grz.2}. By lemma~\ref{Lemma.Surj-world-parameters-contain-Grz.2} the propositional modal logic \theoryf{S4.2} is valid at $W$. Observation~\ref{Observation.Under-S4.2-weak-buttons-are-buttons} states that if \theoryf{S4.2} is valid, then every weak button is a button. For any two distinct individuals there is a surjection to an accessible quotient world identifying them, and once merged the individuals remain equal in all further accessible worlds.

    By the~\nameref{Upperboundstheorem2}, it therefore suffices to exhibit arbitrarily large finite families
    of independent buttons at $W$. Fix $N\in\mathbb{N}$ and choose pairwise distinct parameters
    $u_1,v_1,\ldots,u_N,v_N\in W$.

    Let $\rho(\bar u,\bar v)$ be the assertion saying that some cross-wiring occurs among these
    parameters, namely:
    \begin{equation*}
        \bigvee_{j\neq k}\Bigl(u_j=u_k  \lor u_j=v_k \lor v_j=v_k\Bigr).
    \end{equation*}
    For each $1\le i\le N$, define $b_i$ as $(u_i=v_i) \lor \rho(\bar u,\bar v)$.
    At $W$ we have $u_i\neq v_i$ and $\neg\rho(\bar u,\bar v)$ (all parameters are distinct),
    hence each $b_i$ is unpushed at $W$.

    Each $b_i$ is a button. Indeed, from any world $W'$ above $W$, if $b_i$ is not yet true at $W'$
    then in particular $\rho(\bar u,\bar v)$ is false at $W'$, and the images of $u_i$ and $v_i$
    in $W'$ are distinct. Let $W''$ be the quotient of $W'$ obtained by identifying these two points
    and no others, and let $\pi\colon W'\twoheadrightarrow W''$ be the quotient map. Then $\pi$ is a
    surjection, so $W''$ is accessible from $W'$ in $\Sets[$\onto$]$, and at $W''$ we have $u_i=v_i$,
    hence $b_i$ holds. Moreover, once $u_i=v_i$ holds at some world it remains true at all further
    accessible worlds because surjections preserve equality; likewise, once $\rho(\bar u,\bar v)$
    holds it remains true for the same reason. Thus $b_i$ becomes necessary once it becomes true.

    To see independence, fix any $S\subseteq\{1,\ldots,N\}$. Let $\sim_S$ be the equivalence relation
    on $W$ generated by $u_i\sim_S v_i$ for $i\in S$ and no other identifications, and set
    $W_S=W/{\sim_S}$. Let $q_S\colon W\twoheadrightarrow W_S$ be the quotient map. Then $q_S$
    is a surjection, so $W_S$ is accessible from $W$ in $\Sets[$\onto$]$. By construction,
    $\rho(\bar u,\bar v)$ is still false at $W_S$, and $u_i=v_i$ holds at $W_S$ if and only if
    $i\in S$. Consequently, exactly the buttons $b_i$ with $i\in S$ are pushed at $W_S$, and the
    others remain unpushed.

    Since $N$ was arbitrary, $W$ admits arbitrarily large finite families of independent buttons,
    and so, by the~\nameref{Upperboundstheorem2}, the propositional modal validities of $W$ are
    contained in the theory $\theoryf{Grz.2}$.
\end{proof}

If one restricts instead to the full subcategory $\InfSets[$\onto$]$ of infinite sets and surjections, the sentential validities again collapse to \theoryf{Triv} by corollary~\ref{Corollary.InfSets-sentences-Triv}. The formulaic validities remain \theoryf{Grz.2} as well. Since passing from $\Sets[$\onto$]$ to $\InfSets[$\onto$]$ removes the finite quotient worlds, the lower bound is not an automatic consequence of lemma~\ref{Lemma.Surj-world-parameters-contain-Grz.2}. We therefore record a direct proof in the infinite-only subcategory: once finite cardinalities disappear, every formula factors through the finite partition lattice generated by the named parameters.

\begin{theorem}\label{Theorem.InfSurj-infinite-worlds-formulas-Grz.2}
    The propositional modal validities of a world in the category $\InfSets[$\onto$]$ of infinite sets and surjective functions with respect to formulaic substitution instances, with parameters, constitute precisely the propositional modal theory \theoryf{Grz.2}. More precisely, for every world $W$ in $\InfSets[$\onto$]$, and for every first-order language $\mathscr{L}$, modal or not, such that $\Lofeq \subseteq \mathscr{L} \subseteq \LDiamondofeq$, we have
    \begin{equation*}
        \Val_{\InfSets[$\onto$]}(W,\mathscr{L}_{W})=\theoryf{Grz.2}.
    \end{equation*}
\end{theorem}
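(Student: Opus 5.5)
The plan is to mirror the proof of theorem~\ref{Theorem.InfSets-formulas-Grz.2} almost verbatim, replacing arbitrary functions by surjections. The key observation is that every map used in that proof can be taken to be a quotient map that identifies finitely many points. Such a map is automatically surjective, and since only finitely many points are identified, its target is still infinite. Throughout, fix a world $W$ in $\InfSets[$\onto$]$.

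\emph{Upper bound.} I will reuse the button construction from theorem~\ref{Theorem.Surj-infinite-worlds-formulas-Grz.2}. Fix pairwise distinct $u_1,v_1,\ldots,u_N,v_N\in W$ and put $b_i:=(u_i=v_i)\lor\rho(\bar u,\bar v)$. For each $S\subseteq\{1,\dots,N\}$, the quotient $W/{\sim_S}$ identifies only finitely many points, so it is infinite and is reached by a surjection. The button property needs the same care: from any world $W'$ above $W$, merging the two images of $u_i,v_i$ is a surjection onto an infinite set. Equalities persist under surjections, so once $b_i$ is true it remains necessary. Statement~(2) of the \nameref{Upperboundstheorem2} then gives $\Val_{\InfSets[$\onto$]}(W,\mathscr L_W)\subseteq\theoryf{Grz.2}$.

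\emph{Lower bound, \theoryf{S4.2}.} For any span $U_0\xleftarrow{f_0}U\xrightarrow{f_1}U_1$ in the cone, a constant map is not available, because it would not be surjective onto an infinite set. Instead I will fix an infinite set $I$ with a point $c\in I$ and build surjections $g_k\colon U_k\twoheadrightarrow I$ that send the finitely many images of the relevant parameter tuple to $c$. On the remaining elements, which form an infinite set, $g_k$ maps onto $I$. The obstacle is that \theoryf{S4.2} must be validated for every parameter tuple, not just finitely many parameters at a time. It is cleaner to use \axiomf{.2} directly, since each instance involves only a finite tuple $\bar a$. Alternatively, take $A=\bar a$ finite: the first lower bounds theorem only uses commutation on the parameters actually occurring in the instance, so its proof goes through verbatim with $A$ finite. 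So for a fixed instance I amalgamate over the finite set $f(\bar a)$ in this way, and the square commutes on $\bar a$ because everything in $\bar a$ goes to $c$.

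\emph{Lower bound, \theoryf{Grz}.} I will apply the \nameref{Lowerboundstheorem2}. By corollary~\ref{Corollary.InfSets-formulas-partitions}, the truth of $\varphi[f(\bar w)]$ at any world above $U$ depends only on the partition realized by the image tuple. Surjections can only coarsen this partition, and every coarsening $q$ of the current partition $p$ is realized by a finite-identification quotient $h_q\colon U\twoheadrightarrow U_q$ with $U_q$ infinite. From here, the maximality argument of theorem~\ref{Theorem.InfSets-formulas-Grz.2} transfers word for word. Let $t$ be the truth value at the top element of the lattice of coarsenings of $p$. Choose $q$ maximal among the partitions with truth value $1-t$. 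Then the true value at $U_q$ is penultimate. Combining this with \theoryf{S4.2} yields \theoryf{Grz.2}, and together with the upper bound this proves the equality.

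The main obstacle I expect is the amalgamation step, where the requirement of surjective maps into infinite targets replaces the constant maps used in theorem~\ref{Theorem.InfSets-formulas-Grz.2}. The rest is a direct transcription of that proof.
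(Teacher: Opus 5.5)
Your proof is correct. The upper bound is the paper's button construction, but your lower bound takes a different route.

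The paper does not split \theoryf{Grz.2} into \theoryf{S4.2} and \theoryf{Grz}. For a fixed instance it gathers the parameters into a tuple $\bar a$ of $n$ distinct elements. It uses corollary~\ref{Corollary.InfSets-formulas-partitions} to write each substituted formula as a disjunction of partition formulas on $\bar a$, and builds a propositional model on the partition lattice $F_n$ recording these disjunctions. It then proves by induction that any cone world realizing $P$ agrees with the node $P$ on all substituted formulas. This uses only that surjections coarsen and that every coarsening is realized by a finite-identification quotient, which stays infinite. It concludes by soundness of \theoryf{Grz.2} on the finite directed partial order $F_n$.

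You instead get \theoryf{S4.2} from the \nameref{Lowerboundstheorem} over finite parameter sets, and \theoryf{Grz} from the \nameref{Lowerboundstheorem2} via the maximal-coarsening argument of theorem~\ref{Theorem.InfSets-formulas-Grz.2}. That argument does transfer unchanged, because its maps $h_q$ are already surjections onto infinite sets.

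Your restriction to finite $A$ is necessary, not just cleaner. For $W=\omega$, take the two quotients identifying $2k$ with $2k+1$, and $2k+1$ with $2k+2$, respectively. Their kernels generate the total relation, so any square commuting on all of $W$ is constant, and the cone is not $W$-amalgamable.

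One small repair: take $I$ countable, or of size at most $\min(|U_0|,|U_1|)$. Otherwise the rest of $U_k$ need not surject onto $I$.

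Your route is modular and recycles existing arguments, but it depends on the second lower bounds theorem and its external characterization. The paper's route is self-contained and needs only soundness on one finite frame. It also shows more: relative to $n$ fixed parameters, the cone is modally indistinguishable from $F_n$.
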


\begin{proof}
    Let $W$ be a world in $\InfSets[$\onto$]$.

    We first prove the upper bound. By statement~(2) of the~\nameref{Upperboundstheorem2}, it suffices to show that $W$ has arbitrarily large finite independent families of unpushed buttons. Fix $N\in\omega$, and choose pairwise distinct parameters
    \begin{equation*}
        u_1,v_1,\ldots,u_N,v_N\in W.
    \end{equation*}
    Let $\rho(\bar u,\bar v)$ be the assertion that some unwanted cross-identification occurs among these parameters, namely
    \begin{equation*}
        \bigvee_{j\neq k}\Bigl(u_j=u_k \lor u_j=v_k \lor v_j=v_k\Bigr).
    \end{equation*}
    For each $1\leq i\leq N$, define
    \begin{equation*}
        b_i := (u_i=v_i)\lor \rho(\bar u,\bar v).
    \end{equation*}
    At $W$ all the displayed parameters are distinct, so each $b_i$ is unpushed.

    Each $b_i$ is a button at $W$. Indeed, let $W'$ be any world in the cone above $W$ at which $b_i$ is false. Then $\rho(\bar u,\bar v)$ is false at $W'$, and the images of $u_i$ and $v_i$ in $W'$ are distinct. Form the quotient of $W'$ obtained by identifying just those two points and no others. Since only finitely many identifications are made, the quotient remains infinite, and the quotient map is a surjection in $\InfSets[$\onto$]$. In the quotient we have $u_i=v_i$, so $b_i$ becomes true. Moreover, once $u_i=v_i$ or $\rho(\bar u,\bar v)$ holds at some world, it remains true at all further accessible worlds, since surjections preserve equality. Thus $b_i$ is necessarily possibly necessary.

    To see independence, fix any $S\subseteq\{1,\ldots,N\}$. Let $\sim_S$ be the equivalence relation on $W$ generated by $u_i\sim_S v_i$ for $i\in S$ and no other identifications, and let
    \begin{equation*}
        q_S\colon W\twoheadrightarrow W/{\sim_S}
    \end{equation*}
    be the quotient map. Since only finitely many pairs are identified, the quotient $W/{\sim_S}$ is still infinite, and hence is a world of $\InfSets[$\onto$]$. By construction, $\rho(\bar u,\bar v)$ is false in $W/{\sim_S}$, and $u_i=v_i$ holds there if and only if $i\in S$. Consequently, exactly the buttons $b_i$ with $i\in S$ are pushed at that world, and the remaining ones are unpushed. Since $N$ was arbitrary, $W$ has arbitrarily large finite independent families of unpushed buttons. Therefore
    \begin{equation*}
        \Val_{\InfSets[$\onto$]}(W,\mathscr{L}_{W})\subseteq \theoryf{Grz.2}.
    \end{equation*}

    We now prove the lower bound. Let
    \begin{equation*}
        \sigma(p_0,\ldots,p_k)\in \theoryf{Grz.2}
    \end{equation*}
    be any propositional modal assertion, and let
    \begin{equation*}
        \sigma\bigl(\psi_0,\ldots,\psi_k\bigr)
    \end{equation*}
    be any substitution instance arising from assertions $\psi_0,\ldots,\psi_k$ in $\mathscr{L}_{W}$. Since only finitely many parameters occur in the $\psi_i$, there is a finite tuple
    \begin{equation*}
        \bar a=(a_0,\ldots,a_{n-1})
    \end{equation*}
    of distinct elements of $W$ containing all of them.

    By corollary~\ref{Corollary.InfSets-formulas-partitions}, each $\psi_i(\bar a)$ is equivalent in $\InfSets[$\onto$]$ to a finite disjunction of partition formulas on $\bar a$. Let $F_n$ be the partition lattice of $n$, ordered by refinement, and for each partition $P\in F_n$ let $\Phi_P(\bar a)$ be the corresponding equality pattern on the tuple $\bar a$. Every world in the cone above $W$ satisfies exactly one $\Phi_P(\bar a)$, and every $P\in F_n$ is realized above $W$ by identifying only the finitely many parameter-values required by $P$. Moreover, for partitions $P,Q\in F_n$, a world satisfying $\Phi_P(\bar a)$ can access a world satisfying $\Phi_Q(\bar a)$ if and only if $Q$ is a coarsening of $P$, that is, if and only if $P\leq_{F_n}Q$. The forward implication holds because any surjection can only identify parameter-values, never separate them. For the converse, if $Q$ coarsens $P$, then by identifying only those parameter-values required to pass from $P$ to $Q$ and leaving all other elements untouched, we obtain a quotient world that is still infinite and that satisfies $\Phi_Q(\bar a)$. Thus $P\mapsto \Phi_P(\bar a)$ is a labeling of $F_n$ for $W$.

    For each $i\leq k$, let
    \begin{equation*}
        \begin{aligned}
            X_i:=\{P\in F_n\mid
            &\text{every world in the cone above $W$ satisfying }\\
            &\Phi_P(\bar a)\text{ also satisfies }\psi_i(\bar a)\}.
        \end{aligned}
    \end{equation*}
    Since each $\psi_i(\bar a)$ depends only on the partition realized by $\bar a$, the set $X_i$ is well defined; equivalently, throughout the cone above $W$ we have
    \begin{equation*}
        \psi_i(\bar a)\iff \bigvee_{P\in X_i}\Phi_P(\bar a).
    \end{equation*}
    Define a propositional Kripke model $M$ on the frame $F_n$ by stipulating, for each $i\leq k$ and each $P\in F_n$,
    \begin{equation*}
        (M,P)\models p_i
        \qquad\text{if and only if}\qquad
        P\in X_i.
    \end{equation*}
    We claim that for every propositional modal formula $\theta(p_0,\ldots,p_k)$, every partition $P\in F_n$, and every world $V$ in the cone above $W$ satisfying $\Phi_P(\bar a)$,
    \begin{equation*}
        V\models \theta(\psi_0,\ldots,\psi_k)
        \qquad\text{if and only if}\qquad
        (M,P)\models \theta(p_0,\ldots,p_k).
    \end{equation*}
    The proof is by induction on the complexity of $\theta$. The atomic case is exactly the definition of the sets $X_i$, and the Boolean connectives are immediate. For the modal step, let $\theta=\possible\eta$. If $V\models \possible\eta(\psi_0,\ldots,\psi_k)$, choose an accessible world $V'$ with $V'\models \eta(\psi_0,\ldots,\psi_k)$, and let $Q$ be the partition realized there by the image of $\bar a$. Then $Q$ coarsens $P$, so $P\leq_{F_n}Q$; by the induction hypothesis, $(M,Q)\models \eta(p_0,\ldots,p_k)$, and hence $(M,P)\models \possible\eta(p_0,\ldots,p_k)$. Conversely, if $(M,P)\models \possible\eta(p_0,\ldots,p_k)$, choose $Q\in F_n$ with $P\leq_{F_n}Q$ and $(M,Q)\models \eta(p_0,\ldots,p_k)$. By identifying in $V$ only the finitely many parameter-values needed to pass from $P$ to $Q$, we obtain a quotient world $V_Q$ in $\InfSets[$\onto$]$ such that $V_Q\models \Phi_Q(\bar a)$. By the induction hypothesis, $V_Q\models \eta(\psi_0,\ldots,\psi_k)$, so $V\models \possible\eta(\psi_0,\ldots,\psi_k)$. This completes the induction.

    Since $F_n$ is a finite partition lattice, it is in particular a finite directed partial order. By the finite-frame characterization of \theoryf{Grz.2}, every assertion in \theoryf{Grz.2} is valid on $F_n$. Hence
    \begin{equation*}
        (M,P_0)\models \sigma(p_0,\ldots,p_k),
    \end{equation*}
    where $P_0$ is the discrete partition realized by the tuple $\bar a$ in the ground world $W$. Applying the displayed equivalence with $V=W$, it follows that
    \begin{equation*}
        W\models \sigma\bigl(\psi_0,\ldots,\psi_k\bigr).
    \end{equation*}
    Since $\sigma$ and the substitution instance were arbitrary, we conclude that
    \begin{equation*}
        \theoryf{Grz.2}\subseteq \Val_{\InfSets[$\onto$]}(W,\mathscr{L}_{W}).
    \end{equation*}

    Combining the lower and upper bounds yields
    \begin{equation*}
        \Val_{\InfSets[$\onto$]}(W,\mathscr{L}_{W})=\theoryf{Grz.2},
    \end{equation*}
    as desired.
\end{proof}

The propositional modal theory $\theoryf{Partition}_n$ is the smallest normal propositional modal theory valid in the partition lattice of an $n$-element set, ordered by refinement. Since this lattice is finite for fixed $n$, no separate finite-model-property argument is needed here.

\begin{theorem}\label{Theorem.Surj-world-size-n-formulas-Partition}
    The propositional modal validities of a world of size $n$ in the category $\Sets[$\onto$]$ of sets and surjective functions with respect to formulaic substitution instances, with parameters, constitute precisely the propositional modal theory $\theoryf{Partition}_n$. More precisely, for every $W_0$ in $\Sets[$\onto$]$ such that $|W_0| = n$, and for every first-order language $\mathscr{L}$, modal or not, such that $\Lofeq \subseteq \mathscr{L} \subseteq \LDiamondofeq$, we have
	\begin{equation*}
		\Val_{\Sets[$\onto$]}(W_0, {\mathscr{L}}_{W_0}) = \theoryf{Partition}_n.
	\end{equation*}
\end{theorem}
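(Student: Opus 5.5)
The plan is to reduce everything to the finite partition lattice $\Pi_n$ of $\{0,\ldots,n-1\}$. Then the lower bound comes from the translation argument used for $\InfSets[$\onto$]$ in theorem~\ref{Theorem.InfSurj-infinite-worlds-formulas-Grz.2}, and the upper bound from the \nameref{LabelingLemma}. By observation~\ref{Observation.Surj-and-FinSurj-same} we may work in $\FinSets[$\onto$]$. If $n=0$, theorem~\ref{Theorem.Surj-singleton-or-empty-world-Triv} gives $\theoryf{Triv}$, which is the theory of the one-point lattice $\Pi_0$; so assume $n>0$.

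\textbf{Key observation.} Enumerate $W_0$ as a tuple $\bar a=(a_0,\ldots,a_{n-1})$ of distinct elements. For any surjection $f\colon W_0\twoheadrightarrow U$, the world $U$ is exactly $f[\bar a]$. So the pair $(U,f(\bar a))$ is determined up to isomorphism by the partition $P_f$ that $f$ induces on $\{0,\ldots,n-1\}$, and in particular $|U|=|P_f|$. By the \nameref{Renaming-lemma}, the truth of any $\LDiamondofeq$-formula with parameters from $\bar a$ at $U$ (evaluated at $f(\bar a)$) therefore depends only on $P_f$. Any substitution instance with parameters from $W_0$ can be rewritten as a formula in the full tuple $\bar a$, so we need not even invoke the \nameref{Modalityeliminationtheorem.Sets}.

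Moreover, a world realizing $P$ accesses a world realizing $Q$ if and only if $P\le Q$ in the refinement order. Surjections only merge parameter values, which gives one direction. Conversely, the quotient that merges the images of the relevant blocks is still a surjection.

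\textbf{Lower bound.} Given $\sigma\in\theoryf{Partition}_n$ and instances $\psi_i(\bar a)$, let $X_i$ be the set of partitions at which $\psi_i$ holds. Define the model $M$ on $\Pi_n$ by setting $(M,P)\models p_i$ iff $P\in X_i$. Then prove, by induction on $\theta$, that for every world $V$ realizing $P$ we have
\begin{equation*}
V\models\theta(\bar\psi)\qquad\text{if and only if}\qquad(M,P)\models\theta(\bar p).
\end{equation*}
The modal step uses the access characterization above, exactly as in the proof of theorem~\ref{Theorem.InfSurj-infinite-worlds-formulas-Grz.2}. Since $\sigma$ is valid on the frame $\Pi_n$, it holds at the discrete partition, and hence $W_0\models\sigma(\bar\psi)$.

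\textbf{Upper bound.} Suppose $\varphi\notin\theoryf{Partition}_n$. Then $\varphi$ fails at some node $P$ of some model $M$ on $\Pi_n$, but not necessarily at the bottom node; this is the step I expect to need care. To move the failure to the bottom, use the map
\begin{equation*}
R\mapsto R\vee P
\end{equation*}
from $\Pi_n$ onto the upper cone ${\uparrow}P$. This map is monotone, and it satisfies the back condition: if $R\vee P\le S'$ with $S'\ge P$, take $S=S'$, which is above $R$ and satisfies $S\vee P=S'$. So it is a p-morphism sending the bottom to $P$. Pulling $M$ back along it gives a model on $\Pi_n$ in which $\varphi$ fails at the bottom node.

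Now label $\Pi_n$ at $W_0$ by $\Phi_Q:=Q(\bar a)$, the equality pattern of $Q$ on the parameters. By the key observation this is a labeling: $W_0$ satisfies the label of the discrete partition, every world on the cone satisfies exactly one label, and accessibility matches refinement. The \nameref{LabelingLemma} then yields $\Lofeq$-formulas with parameters from $W_0$ at which $\varphi$ fails at $W_0$. Combining the two bounds with monotonicity in $\mathscr L$ gives
\begin{equation*}
\Val_{\Sets[$\onto$]}(W_0,\mathscr{L}_{W_0})=\theoryf{Partition}_n
\end{equation*}
for every $\mathscr L$ with $\Lofeq\subseteq\mathscr L\subseteq\LDiamondofeq$.
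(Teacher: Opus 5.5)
Your proof is correct. The upper bound is the paper's: label the partition lattice by the equality patterns $Q(\bar a)$ and apply the \nameref{LabelingLemma}. Your lower bound, however, takes a different route.

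\textbf{How the paper proves the lower bound.} The argument is syntactic.
\begin{itemize}
\item It makes $p_{ij}$ true at a node exactly when $i$ and $j$ share a block.
\item It defines a translation $\psi\mapsto\psi^*$ from $\LDiamondofeq(a_i\mid i<n)$-sentences into propositional modal formulas over the $p_{ij}$. The key clause is $(\exists x\,\psi(x))^*=\bigvee_{i<n}(\psi(a_i))^*$, which is justified because every element of a surjective image of $W_0$ is named.
\item It proves $(M,w_f)\models\psi^*$ iff $W^f\models\psi$ by induction on the first-order modal formula $\psi$.
\item It concludes using closure of $\theoryf{Partition}_n$ under substitution.
\end{itemize}

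\textbf{How your argument differs.} You argue semantically. The pointed world $(U,f(\bar a))$ is determined by the kernel partition up to a bijection, and a bijection is an isomorphism in the surjection category. So the \nameref{Renaming-lemma} makes each $\psi_i(\bar a)$ a function of the partition alone. You then read off the valuation $X_i$ directly and induct on the propositional formula $\theta$, as in theorem~\ref{Theorem.InfSurj-infinite-worlds-formulas-Grz.2}.

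\textbf{What each route buys.} Yours is shorter and needs no quantifier clause. It does not need the elimination theorem either, and it works verbatim for any isomorphism-invariant substitution language. The paper's route yields more: an explicit, formula-by-formula reduction of every parameterized $\LDiamondofeq$-assertion to a propositional modal formula in the equality atoms. Both rest on the same fact, that every element of a world on the cone is the image of a named parameter.

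\textbf{Your p-morphism step.} The map $R\mapsto R\vee P$ from $\Pi_n$ onto ${\uparrow}P$ makes explicit a step the paper leaves implicit. The \nameref{LabelingLemma} transfers failure only at the designated bottom node. So if $\theoryf{Partition}_n$ is read as validity at every node of the frame, your argument is exactly what moves an arbitrary countermodel down to the bottom. Your separate treatment of $n=0$, via the one-point lattice and $\theoryf{Triv}$, is also fine.
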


\begin{proof}
	By observation~\ref{Observation.Surj-and-FinSurj-same}, it is sufficient to work in the category $\FinSets[$\onto$]$.

	Let $W_0$ be a world of size $n$ in $\FinSets[$\onto$]$ and expand it to the language $\Lofeq(a_i\mid i<n)$ by interpreting the constants $a_i$ as a fixed enumeration of the elements of $W_0$; call this expansion $W_0^{+}$.
	Let $F$ be the partition lattice of $n$ ordered by refinement (as in section~\ref{Section.Modality-elimination-in-Sets}), and let $w_0$ be its bottom element (the discrete partition into singletons).

	\medskip
	\noindent\textit{(1) $\theoryf{Partition}_n$ is valid at $W_0$ for formulaic substitution instances (with parameters), even in $\LDiamondofeq$.}

	Build a propositional Kripke model $M$ on the frame $F$ as follows. For each atomic equality $a_i=a_j$ associate a propositional variable $p_{ij}$, and stipulate that
	\begin{align*}
		(M,w)\models p_{ij}
		\qquad\text{if and only if}\qquad
		&\text{$i$ and $j$ belong to the same block}\\
		&\text{of the partition }w.
	\end{align*}

	For every surjection $f\colon W_0\to W$, let $W^f$ denote the induced expansion of $W$ to the language $\Lofeq(a_i\mid i<n)$ in which each $a_i$ is interpreted as $f(a_i)$, and let $w_f\in F$ be the kernel-partition of $f$ (so $i$ and $j$ are in the same block of $w_f$ iff $f(a_i)=f(a_j)$). Then for all $i,j<n$,
	\begin{equation*}
		(M,w_f)\models p_{ij}\qquad\text{if and only if}\qquad W^f\models a_i=a_j.
	\end{equation*}

	Moreover, if $f\colon W_0\to W$ and $g\colon W_0\to U$ are surjections, then $w_f\leq_F w_g$ (that is\ $w_f$ refines $w_g$) if and only if there exists a surjection $h\colon W\to U$ with $g=h\compose f$. In that case, $h$ is exactly the accessibility mapping witnessing that $W^f$ accesses $U^g$ in the cone above $W_0$.

	Define a translation $\psi\mapsto\psi^*$ from $\LDiamondofeq(a_i\mid i<n)$-formulas to propositional modal formulas in the variables $p_{ij}$ by
	\begin{align*}
		(a_i=a_j)^*&=p_{ij}, & (\neg\psi)^*&=\neg\psi^*,\\
		(\psi_0\wedge\psi_1)^*&=\psi_0^*\wedge\psi_1^*, & (\possible\psi)^*&=\possible\psi^*.
	\end{align*}
	and (since every element of any surjective image of $W_0$ is named by some $a_i$)
	\begin{equation*}
		(\exists x\,\psi(x))^*=\bigvee_{i<n}(\psi(a_i))^*.
	\end{equation*}
	A routine induction on $\psi$ shows that for every surjection $f\colon W_0\to W$,
	\begin{equation*}
		(M,w_f)\models \psi^* \qquad \text{if and only if} \qquad W^f\models \psi.
	\end{equation*}

	Now suppose $\varphi$ is not valid at $W_0$ with respect to substitution instances from $\LDiamondofeq$ (with parameters from $W_0$). Then for some substitution $p\mapsto \psi_p$ we have $W_0^{+}\not\models \varphi(\psi_{p_0},\ldots,\psi_{p_k})$. Translating each $\psi_{p_i}$ to $\psi_{p_i}^*$ and using the displayed equivalence at $w_0$, we obtain
	\begin{equation*}
		(M,w_0)\not\models \varphi(\psi_{p_0}^*,\ldots,\psi_{p_k}^*).
	\end{equation*}
	Since $\theoryf{Partition}_n$ is a normal theory (hence closed under substitutions), this shows $\varphi\notin \theoryf{Partition}_n$. Equivalently, $\theoryf{Partition}_n\subseteq \Val_{\FinSets[$\onto$]}(W_0,{\LDiamondofeq}_{W_0})$.

	\medskip
	\noindent\textit{(2) The validities are contained in $\theoryf{Partition}_n$.}

	For each partition $P\in F$, let $\Phi_P$ be the $\Lofeq(a_i\mid i<n)$-sentence $P[\bar a]$ expressing that the equalities among the constants $a_i$ are exactly those prescribed by $P$ (cf.\ section~\ref{Section.Modality-elimination-in-Sets}). Then $W_0^{+}\models \Phi_{w_0}$, and every world in the cone above $W_0$ satisfies exactly one $\Phi_P$.

	Finally, for partitions $P,Q\in F$, we have $P\leq_F Q$ if and only if $P$ refines $Q$ (equivalently, $Q$ is coarser than $P$). Viewing partitions as formulas, this corresponds exactly to the accessibility relation in the cone: whenever $P\leq_F Q$, every world satisfying $\Phi_P$ can be further mapped (via a surjection) to a world satisfying $\Phi_Q$, so $\Phi_P\rightarrow \possible \Phi_Q$ holds throughout the cone. Thus $P\mapsto \Phi_P$ is a labeling of $F$ for $W_0$. By the~\nameref{LabelingLemma}, it follows that
	\begin{equation*}
		\Val_{\FinSets[$\onto$]}(W_0,{\Lofeq}_{W_0})\subseteq \theoryf{Partition}_n.
	\end{equation*}

	Now for any $\mathscr L$ with $\Lofeq\subseteq \mathscr L\subseteq \LDiamondofeq$, monotonicity of validities with respect to the language of substitution instances yields
	\begin{align*}
		\theoryf{Partition}_n
		&\subseteq \Val_{\FinSets[$\onto$]}(W_0,{\LDiamondofeq}_{W_0})
		\subseteq \Val_{\FinSets[$\onto$]}(W_0,{\mathscr L}_{W_0})\\
		&\subseteq \Val_{\FinSets[$\onto$]}(W_0,{\Lofeq}_{W_0})
		\subseteq \theoryf{Partition}_n.
	\end{align*}
	and hence $\Val_{\FinSets[$\onto$]}(W_0,{\mathscr L}_{W_0})=\theoryf{Partition}_n$. The same equality holds in $\Sets[$\onto$]$ by observation~\ref{Observation.Surj-and-FinSurj-same}.
\end{proof}

\section{The modal theory of the category of sets and injective functions or inclusions}\label{Section.category-of-sets-with-injections}

\begin{observation}\label{Observation.Incl-and-Inj-same}
	A world in the category of sets and injective functions has precisely the same modal truths as that same world considered in the category of sets and inclusions. More precisely, if $W$ is a world, then
	\begin{equation*}
		W \models_{\Sets[$\into$]} \varphi[\bar x] \qquad \text{if and only if} \qquad W \models_{\Sets[$\incl$]} \varphi[\bar x],
	\end{equation*}
	for any assertion $\varphi$ in the modal language of equality $\LDiamondofeq$, with parameters or not. In particular, any world in the category $\Sets[$\into$]$ has the same propositional modal validities as it would have in the category $\Sets[$\incl$]$.
\end{observation}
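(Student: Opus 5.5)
We prove the equivalence by structural induction on $\varphi$, simultaneously for all worlds $W$ and all assignments $\nu$. The claim is that $W \models_{\Sets[$\into$]} \varphi[\nu]$ if and only if $W \models_{\Sets[$\incl$]} \varphi[\nu]$.

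Atomic formulas, Boolean connectives and quantifiers are routine, since they do not refer to the category. The only real case is $\possible\psi$, and here the key step is a factorization. Every injection $f\colon W\to U$ can be written as $f=\pi\compose\iota$, where $\iota\colon W\incl W'$ is an inclusion and $\pi\colon W'\to U$ is a bijection. To build $W'$, take a set $W'\supseteq W$ together with a bijection $\pi$ that extends $f$. Concretely, let $W' = W\cup D$, where $D$ is a set disjoint from $W$ and in bijection with $U\setminus f[W]$.

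For the modal step, first suppose $W\models_{\Sets[$\into$]}\possible\psi[\nu]$, witnessed by an injection $f\colon W\to U$ with $U\models_{\Sets[$\into$]}\psi[f\compose\nu]$. Since $\pi$ is a bijection, it is an isomorphism in $\Sets[$\into$]$. The \nameref{Renaming-lemma} applied there gives $W'\models_{\Sets[$\into$]}\psi[\iota\compose\nu]$. The induction hypothesis converts this to truth in $\Sets[$\incl$]$, so $\iota$ witnesses $W\models_{\Sets[$\incl$]}\possible\psi[\nu]$.

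Conversely, every inclusion is an injection. So a witness $W\incl U$ in $\Sets[$\incl$]$, combined with the induction hypothesis at $U$, yields a witness in $\Sets[$\into$]$. The $\necessary$ case follows by duality.

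The main obstacle is the cone relativization. Satisfaction is defined relative to $\mathrm{Cone}(W_0)$, and the cones in the two categories differ: the $\into$-cone contains all sets of cardinality at least $|W_0|$, while the $\incl$-cone contains only supersets of $W_0$. Because every $\into$-accessible world is isomorphic, via a bijection, to an $\incl$-accessible one, the Renaming lemma can be applied within the injective cone, and the induction goes through. To settle this cleanly, I would phrase the induction hypothesis for all worlds in either cone. I would also note that the renaming lemma is applied only in $\Sets[$\into$]$, where bijections are morphisms; in $\Sets[$\incl$]$ they are not. The statement about propositional modal validities then follows immediately, since substitution instances have identical truth values at $W$ in both categories.
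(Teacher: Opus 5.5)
Your proposal is correct and is the argument the paper has in mind. The paper states this observation without a written proof and later calls it a ``renaming argument'': factor each injection as an inclusion followed by a bijection, then apply the Renaming lemma inside the injective category, exactly as you do. Your worry about cone relativization is harmless, since cones are upward closed and full, so satisfaction at a world does not depend on which base world's cone it is viewed in.
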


\begin{observation}\label{Observation.Inj-and-FinInj-same}
A finite world in the category of sets and injective functions or the category of sets and inclusions has precisely the same modal truths as that same world considered in the category of finite sets and injective functions or the category of finite sets and inclusions. More precisely, if $W$ is a finite world, then (1) $W \models_{\Sets[$\into$]} \varphi[\bar x]$, (2) $W \models_{\FinSets[$\into$]} \varphi[\bar x]$, (3) $W \models_{\Sets[$\incl$]} \varphi[\bar x]$, and (4) $W \models_{\FinSets[$\incl$]} \varphi[\bar x]$ are equivalent for any assertion $\varphi$ in the modal language of equality $\LDiamondofeq$, with parameters or not. In particular, any world in the category $\FinSets[$\into$]$ has the same propositional modal validities as it would have in the categories $\Sets[$\into$]$, $\Sets[$\incl$]$, and $\FinSets[$\incl$]$.
\end{observation}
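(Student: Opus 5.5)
The plan is to show that the four satisfaction relations coincide by comparing cones. Fix a finite world $W$. The key fact is that a morphism from $W$ into an infinite set never helps a modal formula reach a truth value that a finite set could not also reach. There are two reductions. The first, from $\Sets[$\into$]$ to $\Sets[$\incl$]$ and from $\FinSets[$\into$]$ to $\FinSets[$\incl$]$, is exactly observation~\ref{Observation.Incl-and-Inj-same}; its finite version goes through verbatim, since every injection $f\colon W\to U$ factors as a bijection $W\cong f[W]$ followed by an inclusion $f[W]\subseteq U$. So it suffices to compare $\Sets[$\into$]$ with $\FinSets[$\into$]$ at finite worlds.

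For that comparison I will use the \nameref{Modalityeliminationtheorem.Sets}, in the style of observation~\ref{Observation.Sets-and-FinSets-same}. The elimination clauses for injections (derived there for inclusions and transferred by the \nameref{Renaming-lemma}) are literally identical in $\Sets[$\into$]$ and $\FinSets[$\into$]$. In each category:
\begin{itemize}
\item $\possible\bigl(Q(\bar x)\land\sigma_{\geq k}\bigr)$ reduces to $Q(\bar x)$;
\item $\possible\bigl(Q(\bar x)\land\sigma_j\bigr)$ reduces to $Q(\bar x)\land\sigma_{\leq j}$;
\item $\possible\sigma_0$ reduces to $\sigma_0$.
\end{itemize}
One point needs care. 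In $\Sets[$\into$]$, the first equivalence is witnessed by injecting into a large set, which may be infinite. In $\FinSets[$\into$]$ we need a finite target of size at least $k$ realizing $Q$, and such a target always exists by padding. The second equivalence uses finite targets in both categories.

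With these clauses in hand, an induction on $\varphi$ (quantifier steps handled as in ordinary quantifier elimination for $\Lofeq$) shows that every $\LDiamondofeq$-formula is equivalent, in both categories, to the \emph{same} Boolean combination of partition formulas and $\sigma_n$'s. These normal forms are non-modal. They therefore have the same truth value at the finite world $W$ regardless of the ambient category, with any parameters $\bar a\in W$. This gives the equivalences (1)$\Leftrightarrow$(2)$\Leftrightarrow$(3)$\Leftrightarrow$(4), and the statement about propositional validities follows immediately, since the substitution instances are evaluated identically.

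The main obstacle is making sure the elimination really yields the \emph{same} normal form in both categories. Modality elimination was stated per category, so a priori different equivalent forms could be chosen. I will handle this by running the induction myself with the explicit clauses above, checking at each $\possible$-step that the witness needed in $\Sets[$\into$]$ can be taken finite whenever the source world is finite. If that bookkeeping becomes awkward, the fallback is a direct back-and-forth. Every world in the cone above $W$ in $\Sets[$\into$]$ that is infinite can be replaced by a finite world of size exceeding the quantifier/modal rank bound of $\varphi$ together with the parameter images, preserving $\varphi$. For that replacement I would again appeal to the normal-form threshold: all sufficiently large worlds agree.
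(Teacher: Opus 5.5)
Your proposal is correct and is essentially the paper's argument. The injection/inclusion clauses of the elimination theorem ($\possible(Q\land\sigma_{\geq k})\leftrightarrow Q$, $\possible(Q\land\sigma_j)\leftrightarrow Q\land\sigma_{\leq j}$, $\possible\sigma_0\leftrightarrow\sigma_0$) hold verbatim in both the unrestricted and the finite category, because padding to a finite target suffices. So every assertion has the same non-modal normal form in both, and hence the same truth value at a finite $W$.

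The only difference is cosmetic. The paper runs this comparison for the inclusion categories and transfers it to injections via observation~\ref{Observation.Incl-and-Inj-same}, whereas you run it for injections and transfer to inclusions. For that transfer the factorization you actually need is an inclusion $W\subseteq U'$ followed by a bijection $U'\cong U$, since bijections are not morphisms of the inclusion category. Alternatively, you can note that your explicit clauses apply unchanged to all four categories, which gives (1)--(4) at once.
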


\begin{proof}
	By observation~\ref{Observation.Incl-and-Inj-same}, it suffices to compare the inclusion categories. The inclusion case of the~\nameref{Modalityeliminationtheorem.Sets} gives the same finite Boolean reductions in $\Sets[$\incl$]$ and in $\FinSets[$\incl$]$ when the starting world is finite: inclusions preserve equality patterns, can enlarge finite worlds to any larger finite cardinality, and cannot shrink cardinality. Therefore every $\LDiamondofeq$-assertion is equivalent, in both inclusion categories at the finite world $W$, to the same finite Boolean combination of partition formulas and exact-cardinality sentences. These have the same truth values at $W$ in the unrestricted and finite subcategories, proving the equivalence of (3) and (4). The equivalence with (1) and (2) follows from observation~\ref{Observation.Incl-and-Inj-same} and its finite-subcategory version, which is obtained by the same renaming argument.
\end{proof}


\begin{theorem}\label{Theorem.Inj-infinite-world-parameters-Triv}
    The propositional modal validities of an infinite world in the category $\Sets[$\into$]$ of sets and injective functions, with respect to any assertions whatsoever, constitute precisely the trivial propositional modal theory \theoryf{Triv}. More precisely, for every infinite world $W$ in $\Sets[$\into$]$, and for every first-order language $\mathscr{L}$, modal or not, such that $\Lofeq \of \mathscr{L} \of {\LDiamondofeq}_{W}$, we have
    \begin{equation*}
        \Val_{\Sets[$\into$]}(W, \mathscr{L}) = \theoryf{Triv}.
    \end{equation*}
\end{theorem}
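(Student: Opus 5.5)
The plan is to reduce everything to model completeness and then invoke theorem~\ref{Theorem.world-complete-iff-modalities-trivialize}, restricted to the cone above $W$. Every world $U$ in that cone receives an injection from $W$, so $U$ is infinite; the cone therefore lives entirely among infinite sets. By the \nameref{Upperboundstheorem}, part~(1), the validities are always contained in \theoryf{Triv}. So it suffices to show that \theoryf{Triv} is valid at $W$ even for the largest substitution language ${\LDiamondofeq}_W$. By the \nameref{ConeLemma} and the fact that \theoryf{S4} is always valid, this amounts to showing $\necessary\varphi\iff\varphi$ at every world of the cone, for every $\LDiamondofeq$-formula with parameters.

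The key step is that every injection $f\colon U\to V$ between infinite sets is $\Lofeq$-elementary. I would argue this directly from the quantifier elimination in the pure language of equality (\cite[theorem 1.5.7]{ChangKeisler1990:ModelTheory}) already used above. Every $\Lofeq$-formula $\varphi(\bar x)$ is equivalent to a Boolean combination of partition formulas in $\bar x$ and exact-cardinality sentences $\sigma_n$. On infinite sets every $\sigma_n$ is false. Injections preserve the partition realized by any tuple. Hence $U\models\varphi[\bar a]$ if and only if $V\models\varphi[f(\bar a)]$.

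Once model completeness is in hand, the argument of theorem~\ref{Theorem.world-complete-iff-modalities-trivialize} runs within $\mathrm{Cone}(W)$. The proof there is local: it uses only accessibility mappings between worlds under consideration. So every $\LDiamondofeq$-formula is equivalent on the cone to its modality-erasure, and $\possible\varphi\iff\varphi$ holds there, parameters included. Therefore every substitution instance of $\axiomf{Triv}$ holds throughout the cone, and the \nameref{ConeLemma} gives $\theoryf{Triv}\subseteq\Val_{\Sets[$\into$]}(W,{\LDiamondofeq}_W)$. Monotonicity in the substitution language then yields equality for every intermediate $\mathscr L$.

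The main obstacle is a technical point rather than a conceptual one. Theorem~\ref{Theorem.world-complete-iff-modalities-trivialize} is stated for a whole Kripke category, and $\Sets[$\into$]$ as a whole is not model complete, since finite sets embed into larger ones. I would handle this in one of two ways. The first is to remark that the cone above $W$ coincides with the cone above $W$ in $\InfSets[$\into$]$, and that the latter category is model complete by the step above. The second is to repeat the short erasure induction verbatim, relativized to the cone. Alternatively, one can cite corollary~\ref{Corollary.InfSets-formulas-partitions} for $\InfSets[$\into$]$: each formula is a disjunction of partition formulas, and these are invariant under injections, giving $\possible\varphi\iff\varphi$ immediately.
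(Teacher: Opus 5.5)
Your proposal is correct and follows essentially the paper's route: reduce to model completeness of the cone via theorem~\ref{Theorem.world-complete-iff-modalities-trivialize}, then show that injections between infinite sets are $\Lofeq$-elementary. The differences are minor: you get elementarity from quantifier elimination for pure equality instead of the paper's Tarski--Vaught argument, and you make explicit the relativization to the cone (equivalently, to the subcategory of infinite sets and injections), which the paper leaves implicit.
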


\begin{proof}
	The validities of any Kripke category, with respect to any substitution instances, are always contained in the trivial propositional modal theory \theoryf{Triv}.

    From theorem~\ref{Theorem.world-complete-iff-modalities-trivialize}, to show that \theoryf{Triv} is valid at infinite worlds, it is sufficient to show that the cones above the infinite worlds are model complete. We are in the case of $\Sets[$\into$]$, so this is just the traditional notion of model completeness for the first-order theory of infinite sets in the language of equality. Two models of this theory are isomorphic just in case they have the same cardinality. Moreover, this theory is model complete. Indeed, if $f\colon A\hookrightarrow B$ is an injection between infinite sets, then $f$ is an $\Lofeq$-elementary embedding. Indeed, by the Tarski--Vaught test, any existential condition over parameters from $A$ that is realized in $B$ is witnessed either by one of those parameters or by an element distinct from finitely many parameters; since $A$ is infinite, such a witness can always be chosen from $A$, and hence the embedding is elementary.
\end{proof}

\begin{theorem}\label{Theorem.Inj-finite-worlds-Grz.3}
    The propositional modal validities of a finite world in the category $\Sets[$\into$]$ of sets and injective functions constitute precisely the propositional modal theory \theoryf{Grz.3}. More precisely, for every finite world $W$ in $\Sets[$\into$]$ and every first-order language $\mathscr{L}$, modal or not, such that $\Lofeq \of \mathscr{L} \of {\LDiamondofeq}_{W}$, we have that
    \begin{equation*}
		\Val_{\Sets[$\into$]}(W, \mathscr{L}) = \theoryf{Grz.3}.
	\end{equation*}
\end{theorem}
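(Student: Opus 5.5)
By observation~\ref{Observation.Incl-and-Inj-same} and observation~\ref{Observation.Inj-and-FinInj-same}, I will work in $\Sets[$\incl$]$, where the cone above a finite $W$ consists of supersets of $W$. For the upper bound I use sentences of $\Lofeq$, which are the weakest substitution language. For the lower bound I allow full ${\LDiamondofeq}_{W}$ with parameters. These two bounds then sandwich every intermediate $\mathscr L$ by monotonicity, exactly as in the chains used in earlier proofs.

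\emph{Upper bound.} By statement~(1) of the~\nameref{Upperboundstheorem2}, it suffices to find arbitrarily long uncranked ratchets at $W$ in $\Lofeq$. Let $|W|=n$, fix $N$, and let $r_i$ for $1\le i\le N$ assert that there are at least $n+i$ elements. The ratchet is monotone, since $r_{i+1}\to r_i$ holds outright. Each $r_i$, once true, stays necessary, because inclusions cannot shrink the set. From any world of volume $i$, meaning exactly $n+i$ elements or at least $n+N$ in the top case, one can add exactly $j-i$ new points to reach volume $j$ without overshooting. The ratchet is uncranked at $W$ because every $r_i$ fails there. Infinite supersets simply have full volume, so they cause no trouble.

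\emph{Lower bound.} By the~\nameref{Modalityeliminationtheorem.Sets}, any substitution instance with parameters $\bar a\in W$ is equivalent to a Boolean combination of equality atoms among $\bar a$ and sentences $\sigma_k$. Equality among the $\bar a$ is preserved and reflected by inclusions, so those atoms are fixed truth values across the cone. Hence the truth of each $\psi_j[\bar a]$ at a cone world $U$ depends only on $|U|\in\{n,n+1,\dots\}\cup\{\infty\}$. Moreover, all worlds of size above some threshold $K$ agree, and this includes the infinite worlds.

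I then map the cone to the finite linear order of sizes $n<n+1<\dots<K<K{+}$, where the last node collects all sizes beyond $K$, including infinite ones. I place a propositional model on this order with valuations read off from the $\psi_j$, and prove by induction on $\chi$, as in the proof of theorem~\ref{Theorem.Surj-world-size-n-sentences-Grz.3Jn}, that truth at a node matches truth at any world of that size. The modal step uses two facts: from size $m$ one reaches exactly the sizes $\ge m$, and any world in the final bucket reaches only worlds in that same bucket. Since this frame is a finite linear order, it validates $\theoryf{Grz.3}$, and therefore $W\models\varphi(\bar\psi)$ for every $\varphi\in\theoryf{Grz.3}$.

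The main obstacle is this last step: the modal fragment is not uniform, because the cone contains both arbitrarily large finite worlds and infinite ones. The paper's own observations already handle this. The cardinality-threshold argument works because Boolean combinations involve only finitely many $\sigma_k$, which collapses the cone onto a finite chain. Alternatively, one could use the~\nameref{Lowerboundstheorem} for $\theoryf{S4.3}$ via factorability, since any two supersets of $W$ are comparable in size over $W$ and one embeds into the other over $W$. One would then get $\theoryf{Grz}$ via the~\nameref{Lowerboundstheorem2}, taking the world of size just below the first change of truth value as the penultimate point.
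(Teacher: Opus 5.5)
Your upper bound is the paper's: the same ratchet of sentences ``there are at least $|W|+i$ elements'', used with statement~(1) of the \nameref{Upperboundstheorem2}. Your lower bound is correct but takes a different route from the paper. The paper argues in three steps:
\begin{itemize}
\item it gets \theoryf{S4.3} by ruling out independent weak buttons on the cone (lemma~\ref{Lemma.S4.3-valid-iff-no-independent-buttons});
\item it gets \theoryf{Grz} from the \nameref{Lowerboundstheorem2}, by exhibiting for each contingent assertion a penultimate world, namely one whose number of extra elements is the \emph{greatest} finite value at which the truth value differs from the eventual one;
\item it combines the two with the \nameref{ConeLemma}.
\end{itemize}
You instead fix the finitely many substitution instances and use the cardinality threshold $K$ to collapse the cone onto the finite chain $n<\dots<K<K^{+}$. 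You then prove a truth correspondence by induction on $\chi$ and appeal to soundness of \theoryf{Grz.3} on finite linear orders. This is the template the paper itself uses for the lower bound in theorem~\ref{Theorem.Surj-world-size-n-sentences-Grz.3Jn}. The two facts you isolate for the modal step are exactly what is needed: from size $m$ one reaches exactly the sizes $\ge m$, and worlds in the $K^{+}$ bucket see only that bucket, including themselves. Parameters cause no trouble, since injections freeze the equality pattern of $\bar a$. Your route is more self-contained, because it bypasses the \nameref{Lowerboundstheorem2}, which the paper imports from elsewhere. It also gives slightly more: each finite batch of instances is modeled by a finite chain. The paper's route is more modular and does not require the whole cone to collapse onto a single finite frame.

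Two cautions apply only to your closing aside. First, the penultimate world must sit just below the \emph{last} change of truth value, not the first. With values T, F, T, F, F, \dots\ along increasing sizes, the world below the first change is not penultimate. Second, the factorability argument has to be run in the category of sets and injective functions rather than the inclusion category, because two supersets of $W$ need not be comparable under inclusion.
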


\begin{proof}
    By monotonicity of validities with respect to the language of substitution instances, it suffices
    to prove the lower bound in the largest language ${\LDiamondofeq}_{W}$. So let us show that
    $\theoryf{Grz.3}$ is valid at the finite world $W$ with respect to assertions in
    ${\LDiamondofeq}_{W}$.

    Fix an arbitrary world $W'$ in the cone above $W$. Since accessibility is by injections, we may
    identify $W$ with its image inside $W'$, and likewise inside every further accessible world.
    By the~\nameref{Modalityeliminationtheorem.Sets}, every assertion in ${\LDiamondofeq}_{W}$ is
    equivalent in $\Sets[$\into$]$ to a finite Boolean combination of atomic equalities and
    inequalities among the named elements of $W$ together with assertions saying that there are
    exactly $k$ further elements outside the distinguished copy of $W$, for finitely many values of
    $k$. Consequently, for any accessible world, the truth of such an assertion depends only on the
    number of elements outside the distinguished copy of $W$: a natural number $r$, or the value
    $\infty$ if there are infinitely many such elements.

    We first show that $\theoryf{S4.3}$ is valid at every world in the cone above $W$ with respect
    to ${\LDiamondofeq}_{W}$. Let $b_0$ and $b_1$ be assertions in ${\LDiamondofeq}_{W}$, and
    suppose towards a contradiction that they are independent weak buttons at $W'$. Then there are
    accessible worlds $U_0,U_1$ from $W'$ with
    \begin{equation*}
        U_0\models \necessary b_0\land \neg b_1
        \qquad\text{and}\qquad
        U_1\models \necessary b_1\land \neg b_0.
    \end{equation*}
    Let $r_0$ and $r_1$ be the numbers of elements outside the distinguished copy of $W$ in $U_0$
    and $U_1$, respectively (either a natural number or $\infty$). These values are linearly
    ordered by the usual order on $\omega\cup\{\infty\}$. Without loss of generality, assume
    $r_0\le r_1$. Then there is an accessible world $V$ from $U_0$ having exactly $r_1$ elements
    outside the distinguished copy of $W$. Since truth of assertions from ${\LDiamondofeq}_{W}$
    depends only on this number, $V$ satisfies exactly the same such assertions as $U_1$, and in
    particular $V\models \neg b_0$. But $U_0\models \necessary b_0$, contradiction. Hence there are
    no independent weak buttons at $W'$. Since $W'$ was arbitrary, lemma~\ref{Lemma.S4.3-valid-iff-no-independent-buttons}
    yields that $\theoryf{S4.3}$ is valid at every world in the cone above $W$.

    Next, let us verify the validity of $\theoryf{Grz}$ on the cone above $W$.
    Fix an arbitrary world $W'$ in that cone, and let $\varphi$ be an assertion in
    ${\LDiamondofeq}_{W}$ that is contingent at $W'$. As observed above, the truth of $\varphi$ at
    accessible worlds depends only on the number of elements outside the distinguished copy of $W$.
    Let $t_\infty$ be the common truth value of $\varphi$ at all worlds with infinitely many such
    extra elements. Because $\varphi$ is equivalent to a finite Boolean combination of assertions of
    the form ``there are exactly $k$ further elements'', there exists $N$ such that every world with
    more than $N$ further elements, and every world with infinitely many further elements, has truth
    value $t_\infty$ for $\varphi$.

    If $W'$ already has infinitely many further elements, then every world accessible from $W'$ also
    has infinitely many further elements, and hence all such worlds have truth value $t_\infty$ for
    $\varphi$. Therefore $\varphi$ cannot be contingent at such a $W'$. So whenever $\varphi$ is
    contingent at $W'$, the world $W'$ has only finitely many further elements.

    Since $\varphi$ is contingent at $W'$, there is an accessible world from $W'$ on which
    $\varphi$ has truth value different from $t_\infty$. Among the finitely many accessible finite
    sizes at which this happens, let $n$ be the greatest. Choose an accessible world $U$ from $W'$
    having exactly $n$ further elements outside the distinguished copy of $W$. Then either
    $\varphi$ or $\neg\varphi$ is penultimate at $U$: it is true at $U$, it becomes false at any
    accessible world with $n+1$ further elements, and it remains false at all further accessible
    worlds because every accessible world with more than $n$ further elements, and every accessible
    world with infinitely many further elements, has the eventual truth value $t_\infty$.

    Thus every contingency at every world in the cone above $W$ possibly attains a penultimate
    truth-value. Since $\theoryf{S4.3}$, and hence in particular $\theoryf{S4}$, is valid
    throughout the cone above $W$, the \nameref{Lowerboundstheorem2} yields that $\theoryf{Grz}$ is
    valid at every world in the cone above $W$.

    Finally, by the~\nameref{ConeLemma}, the normal closure of $\theoryf{S4.3}$ and
    $\theoryf{Grz}$, namely $\theoryf{Grz.3}$, is valid at $W$ with respect to
    ${\LDiamondofeq}_{W}$. By monotonicity of validities, the same lower bound holds for every
    language $\mathscr L$ with $\Lofeq\of\mathscr L\of {\LDiamondofeq}_{W}$.

    We next show that the propositional modal validities of $W$ are contained in \theoryf{Grz.3}.
    Let $N$ be an arbitrarily large natural number. For each $n<N$, let $r_n$ be the assertion that
    there are at least $n$ elements. We claim that the sequence $\langle r_n \mid |W|<n<N\rangle$
    is a ratchet that has not yet begun to crank at $W$. Indeed, all assertions $r_n$ in the
    sequence are false at $W$. Furthermore, since $W$ accesses precisely those worlds it injects to,
    the assertion $\necessary r_n$ is possible at $W$ (for instance, by moving to an $n$-element
    world), as well as $r_n \implies \necessary r_m$ for each $m<n$. By the~\nameref{Upperboundstheorem2},
    the propositional modal assertions valid at $W$ with respect to sentences, and therefore all
    assertions whatsoever, are contained in the propositional modal theory \theoryf{Grz.3}. Recall
    that the choice of $W$ was arbitrary, so the theorem holds for any finite world of
    $\Sets[$\into$]$.
\end{proof}

\section{The modal theory of the category of sets and bijections or identities}\label{Section.category-of-sets-with-isomorphisms}

For completeness, we finish by classifying validities of worlds that are subcategories of the category $\Sets[$\bij$]$ of sets and bijections. At this point, the observation below is immediate.

\begin{observation}\label{Observation.Iso-trivializations}
	The propositional modal validities of a world in any subcategory of the category $\Sets[$\bij$]$ of sets and bijections, with respect to any assertions whatsoever, in any first-order language $\mathscr{L}$---whether modal or not---that contains the first-order non-modal language $\Lofeq$ and is contained in the first-order modal language $\LDiamondofeq$, constitute precisely the trivial propositional modal theory \theoryf{Triv}. In other words, for any world $W$ in a Kripke category $\KripkeCat$ that is a subcategory of $\Sets[$\bij$]$, and any first-order language $\mathscr{L}$, modal or not, such that $\Lofeq \of \mathscr{L} \of {\LDiamondofeq}_{W}$, we have
	\begin{equation*}
		\Val_{\KripkeCat}(W, \mathscr{L}) = \theoryf{Triv}.
	\end{equation*}
\end{observation}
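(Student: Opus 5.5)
The plan is to show that every accessibility mapping in such a category $\KripkeCat$ preserves $\Lofeq$-truth, and then to apply theorem~\ref{Theorem.world-complete-iff-modalities-trivialize} to obtain modality trivialization over $\LDiamondofeq$. Concretely, $\KripkeCat$ is a subcategory of $\Sets[$\bij$]$, so every accessibility mapping $f\colon W\to U$ in $\KripkeCat$ is a bijection. A bijection between sets is an $\Lofeq$-isomorphism. The forward implication $W\satisfies\varphi[\nu]\Rightarrow U\satisfies\varphi[f\compose\nu]$ for non-modal $\varphi$ is the usual invariance of first-order truth under isomorphism, proved by the routine induction (atomic equalities are preserved and reflected by injectivity, and quantifiers are handled by surjectivity). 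As noted after the definition of model completeness, the forward implication alone suffices. Hence $\KripkeCat$ is model complete.

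There is one subtlety. The \nameref{Renaming-lemma} as stated requires $\pi^{-1}$ to be a morphism of the category, which may fail in an arbitrary subcategory of $\Sets[$\bij$]$. For that reason I will not invoke it. Instead I will use only the non-modal invariance of $\Lofeq$-truth under bijections, which needs no inverse inside the category. This is exactly what model completeness asks for. With model completeness in hand, theorem~\ref{Theorem.world-complete-iff-modalities-trivialize} gives $\possible\varphi(\bar x)\leftrightarrow\varphi(\bar x)$ and $\necessary\varphi(\bar x)\leftrightarrow\varphi(\bar x)$ for all $\LDiamondofeq$-formulas. Since the equivalence holds for all assignments, it holds in particular for assignments given by parameters from $W$.

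Next I will deduce that $\theoryf{Triv}$ is valid at $W$. For any substitution instance $\psi$ from ${\LDiamondofeq}_W$, the instance $\necessary\psi\leftrightarrow\psi$ holds at every world of the cone above $W$, and so does the instance of $\axiomf{4}$, since $\axiomf{4}$ is always valid by Theorem~\ref{Theorem.S4-always-valid}. Taking $T$ to be the set of all propositional substitution instances of $\axiomf{Triv}$ and $\axiomf{4}$, the \nameref{ConeLemma} shows that the normal closure $\theoryf{Triv}$ is valid at $W$ for the largest language ${\LDiamondofeq}_W$. By monotonicity it is then valid for every $\mathscr L$ with $\Lofeq\subseteq\mathscr L\subseteq{\LDiamondofeq}_W$.

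For the reverse inclusion I will cite statement~(1) of the \nameref{Upperboundstheorem}: validities are always contained in $\theoryf{Triv}$, because substituting the sentences true and false (available in $\Lofeq$, e.g.\ $\forall x\,x=x$) reduces any instance to classical truth-value substitutions. Combining the two inclusions gives $\Val_{\KripkeCat}(W,\mathscr L)=\theoryf{Triv}$. The only point needing care is the subcategory subtlety about inverses noted above, and it is handled by arguing model completeness directly.
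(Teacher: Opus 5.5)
Your proposal is correct and is essentially the argument the paper intends. The paper records this observation as immediate without a written proof, and its other \theoryf{Triv} results (e.g.\ theorem~\ref{Theorem.Surj-singleton-or-empty-world-Triv}) argue exactly as you do: the universal upper bound \theoryf{Triv}, plus model completeness fed into theorem~\ref{Theorem.world-complete-iff-modalities-trivialize}. Your choice to get model completeness from plain isomorphism invariance of $\Lofeq$-truth, rather than from the \nameref{Renaming-lemma}, is a sound refinement, since an arbitrary subcategory of bijections need not contain the inverses that lemma requires.
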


\section*{Acknowledgments}

I am grateful to Joel David Hamkins for his supervision and for many helpful conversations related to this work.

\printbibliography

\end{document}